\theoremstyle{plain}
\newtheorem{theorem}{Theorem}[section]
\newtheorem{lemma}[theorem]{Lemma}
\newtheorem{conjecture}[theorem]{Conjecture}
\newtheorem{corollary}[theorem]{Corollary}
\theoremstyle{remark}
\newtheorem{definition}[theorem]{Definition}
\newtheorem{example}[theorem]{Example}
\newtheorem{remark}[theorem]{Remark}
\newcommand{\Rd}{\mathbb{R}^d}
\DeclareMathOperator{\inl}{\mathtt{inl}}
\DeclareMathOperator{\scf}{\mathtt{scf}}
\DeclareMathOperator{\ctr}{\mathtt{c}}
\DeclareMathOperator{\card}{\mathtt{card}}
\newcommand{\seceq}{\setcounter{equation}{0}} 
\newcommand{\rev}[1]{{#1}}
\newcommand{\ts}{\textsuperscript}
\newcommand{\pr}{\mathbb P}
\newcommand{\zpl}{\mathbb{N}}
\newcommand{\rpl}{\mathbb{R}^+}
\newcommand{\ex}{\operatorname{\mathbb E}}
\newcommand{\der}{\mathrm d}
\newcommand{\mc}{\mathcal}
\newcommand{\bb}{\mathbb}
\newcommand{\ind}{\operatorname{\mathds{1}}}
\newcommand{\bs}{\mathbf}
\newcommand{\nn}{\nonumber}
\newcommand{\overbar}[1]{\mkern 1.5mu\overline{\mkern-1.5mu#1\mkern-1.5mu}\mkern 1.5mu}
\begin{document}
\title{Ising Disks: Topology Preserving Glauber Dynamics}

\author{Yuliy Baryshnikov}
\address{Department of Mathematics and 
ECE, University of Illinois Urbana-Champaign}
\email{ymb@uiuc.edu}
\thanks{This work was supported in part by ARO Grant W911NF-18-1-0327, MURI SLICE}

\author{Efe Onaran}
\address{Department of Mathematics and School of Engineering and Applied Science, University of Pennsylvania}
\email{eonaran@seas.upenn.edu}

\subjclass[2020]{55U10,60J27,82C41}
\keywords{Cubical sets, Markov chains on discrete space, Self-avoiding polygons}

\begin{abstract}
We introduce a dynamic model where the state space is the set of contractible cubical sets in the Euclidian space. \rev{The permissible state transitions, that is addition and removal of a cube to/from the set, are closest to \emph{Eden model} with topological constraints, and, we show, are \emph{locally decidable}}. We prove that in the planar special case the state space is connected. We then define a continuous time Markov chain with a \emph{fugacity} (tendency to grow) parameter. \rev{Using the correspondence between our model on the plane and the \emph{self-avoiding polygons},} we prove that the Markov chain is irreducible (due to state connectivity), and is also ergodic if the fugacity is smaller than a threshold.
\end{abstract}

\maketitle

\section{Introduction}
\seceq

A cubical set (complex) is a collection of cubes (i.e., products of unit intervals, of various dimensions) such that the intersection of any two cubes is again within the collection. In this paper, we will be dealing with complexes formed by the $d$-dimensional cubes of the integer partition of $\Rd$, cubes that are products of intervals that start and end at integers in $d$ copies of the real line. Equivalently, we will be dealing with the subsets of the \rev{simple cubic} lattice.

We will define a dynamic model on the set of cubical sets evolving through addition and removal of individual cubes. An important constraint on the cubical sets of our model is that {\em their topology will be fixed}: the homotopy type of the union of (closed) cubes of the complex will remain constant. In the following we will be initiating the process with a single cube, so that throughout the evolution, the cubical set will remain contractible. Of course, one can start with arbitrary topological homotopy type realizable by a cubical set in $\Rd$. 

We will be looking at the Markov chains valued in such cubical sets, defined by a Glauber-like local dynamics, generating, in essence, a topologically constrained Ising model. \rev{As in the Glauber dynamics for the Ising model, the state transition probabilities will be defined \emph{locally}, notwithstanding keeping the global topology fixed, through the detailed balance equations obtained from the stationary distribution, when it exists. Our treatment is closely in line with the recent interest in the problem of sampling of Ising model under hard global constraints (also called truncated Ising model, see \cite{chauhanlearning, carlsonfixedising}).}

It might be surprising that this constraint can be realized using local updates: for example, that would be impossible if one attempted to keep the configuration merely connected. However, as we show, the homotopy type can be preserved under the local moves, resulting in an ensemble of contractible cubical complexes. In fact, we will be working with a version where even the local relative homology of any boundary point of the cubical complex $\bs X$ modulo its small vicinity (in $\bs X$) is contractible.
 We call such cubical sets {\em clumps}.

In the case of planar clumps (see Figure~\ref{clumpfig}) their boundary forms a self-avoiding polygon, a close relative of the self-avoiding walks, a highly studied object in statistical physics \cite{guttbook, guttmann_self-avoiding_2012}. For $d=3$, clumps are a proper subset of \emph{polycubes}, cubes attached through their faces without regard to the overall topology \cite{polycube}. Our construction, in particular, allows one to generate higher-dimensional analogues to self-avoiding walks, self avoiding codimension-$1$ cellular structures, something that has not been explored thus far.

\begin{figure}[ht]
\centering
\includegraphics[width=.7\textwidth]{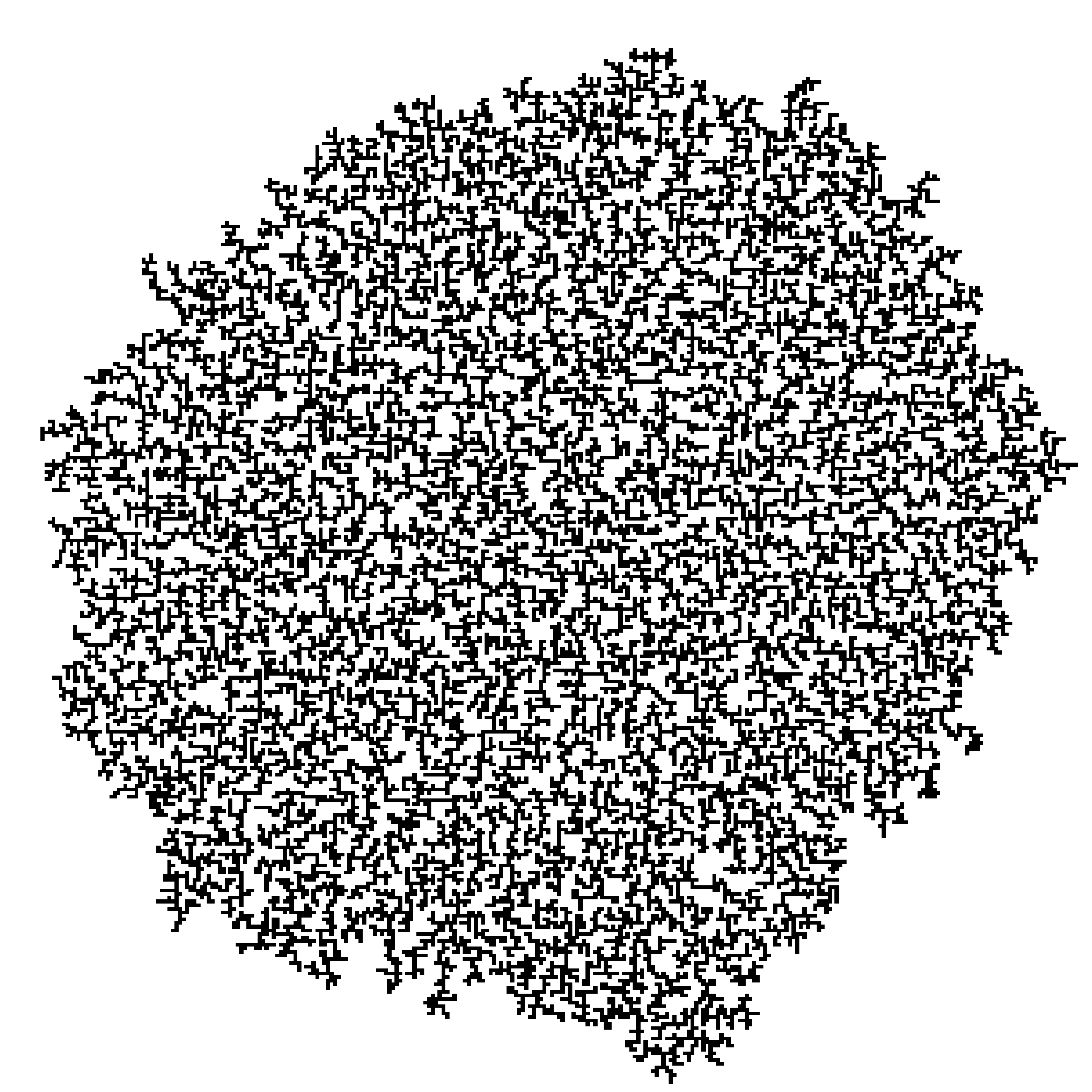}
    \caption{A planar clump, generated by the Markov process defined in Section~\ref{Isingsec} in the ``over-critical regime''. Also see Figure~\ref{snapshots}.}   
    \label{clumpfig}
\end{figure}

While our model is, we believe, new, there are several adjacent lines of research. 
The topology of general growth models has recently garnered attention. \rev{Classical stochastic growth models on $d$-dimensional lattice start with a cubical set (individual cube at the origin or a strip) and evolve by adding a randomly chosen cube at each step. The rules of choosing the cube to add are non-topological and vary by model. Examples include \emph{Eden model} \cite{eden}, \emph{dielectric breakdown model} \cite{dielectric}, \emph{diffusion-limited aggregation} \cite{wittendla}, \emph{ballistic deposition} \cite{meakin1986ballistic}, and their numerous variants \cite{lawleridla, atarstrip}. Specifically, in a recent paper \cite{damron_number_2023}, the authors consider the shape emerging by time $t$ in the Eden model (equivalently the ``least-cost ball'' in the first passage percolation model), and ask what is the number of holes in (equivalently, what is the $(d-1)$-th Betti number of) the resulting shape. More detailed results about the topology of that shape are obtained in \cite{manin_topology_2023,hua_local_2024}. Again, unlike ours, the emphasis in this line of research is to describe the local topology in the growth models, rather than to generate a topology-constrained one.}

\rev{Another line of literature that closely relates to our work here is Pach's animal problem \cite{tancer}. An animal is a cubical set in $d$-dimensional simple cubic lattice that is homeomorphic to the $d$-ball. The problem asks whether all animals can be reduced to a single cube by adding or removing cubes one-by-one and staying as an animal. For $d=2$, our main object of interest is an animal. Our Theorem~\ref{indentthm} gives an affirmative answer to a stronger version of
Pach's animal problem in the planar special case. Indeed, it shows that a planar animal can be reduced to \emph{any chosen cube} of its by only removals.  
For $d\geq 3$, Pach's problem is unsolved. It is not trivial even to see whether our model of clumps (and the definition of cube removal/addition) is equivalent to that of animals in $d=3$ case. Delineating this relationship has the prospect of shedding a fresh light to the Pach's problem and is left for future work. 
}

In this paper, we are focusing on the few basic patterns of the clumps: construction, phase transition in $d=2$, and some extremal event estimates. \rev{Specifically, we will first prove that the ``cube removal'' (or equivalently ``cube addition'') step that keeps the structure clump is locally decidable. Then we will prove that, in the special case of $d=2$, every clump has at least two cubes that can be ``removed'' which will let us establish the ergodicity of the Glauber-type dynamics we will define if the ratio of additions to removals is high enough. Lastly, given the ergodicity, we will present a ``rare-event'' type of result where we will show the convergence (in distribution) of the ``unusually large'' sightings of the dynamic planar clump to the Poisson point process.}

Our model is rich and invites a number of different questions. One of them is the question of the shapes of the clumps. One way to think about them is to view them as two counteracting drifts: to grow, driven by the entropic factor, i.e., the number of clump configurations growing exponentially, as $\mu^A, \mu>1$, in the surface area $A$, and the \emph{fugacity}, which controls the growth exponentially in the volume. As the volume cannot grow more slowly than the surface area of the clump, there is a phase transition: when the fugacity is below a threshold, $\kappa_*$, the clump remains bounded. Furthermore, in the over-critical regime, when the fugacity is not too high, the clumps grow in a porous fashion, looking like a coral (or {\em vesicle} \cite{guttmann_self-avoiding_2012}), see Fig. \ref{clumpfig}. It would be instructive to quantify this behavior.

While we focus here on the clumps evolving from a single cube, other starting configurations are possible. One version of interest is to start with a half space occupied, half vacant. A coral-like interface develops: what is its typical width? speed? These and many other questions will be addressed elsewhere.

\section{Cubical Set Definitions and Notation}
\seceq

Some of the standard definitions we include in the coming paragraphs, and will use throughout the paper, are from \cite{mischai}, although there are some important differences. 

An elementary interval is a set in the form of either $[l-\frac{1}{2},l +\frac{1}{2}]$ or $\{l+\frac{1}{2}\}$ for $l\in \bb Z$. It is called \emph{non-degenerate} if it is in the $[l-\frac{1}{2},l+ \frac{1}{2}]$ form, and \emph{degenerate} otherwise. For some fixed $d\geq 2$, an \emph{(elementary) cube}, $X$, is defined as a product of $d$ elementary intervals. We refer to the geometric realization of a given cube $X$ in the Euclidian space as $|X|\subset \Rd$. All cube realizations we consider will be in $\Rd$. 

We call an elementary cube \emph{$k$-dimensional} if exactly $k$ of the elementary intervals that compose it are non-degenerate. The set of all elementary cubes of dimension $j\geq 0$ is denoted by $\mc{K}^j$.
With some abuse of notation, for $X,Y\in\mc K^{j}$, we mean by $X\cap Y \in \mathcal{K}^{i}$ that $|X| \cap |Y|$ is the geometric realization of an $i$-dimensional elementary cube in $\Rd$. We will also say that $X$ and $Y$ \emph{share an $i$-dimensional face} in this case. 

We call the point in $\Rd$ obtained through taking the middle point of each elementary interval that composes a cube $X$, the \emph{center of $X$}, and denote it by $\ctr(X)$. For instance, the center of the cube $[-\frac{1}{2},\frac{1}{2}]\times\{\frac{3}{2}\}$ is the point 
$(0\;\; \frac{3}{2})\in\mathbb{R}^2$.

A \emph{cubical set}, $\bs X= \{X_1,\ldots ,X_m\}$ for $m\geq 2$, is any finite subset of $\mathcal{K}^d$. The cardinality of a cubical set will be denoted by $\card(\bs X)$, whereas $|\bs X|$ will denote its geometric realization, i.e.,
\begin{align*}
    |\bs X| \coloneqq \bigcup_{i=1}^m |X_i|. 
\end{align*}


For an $x\in\Rd$, $\|x\|_2$ and $\|x\|_\infty$ denote the $\ell_2$ and $\ell_\infty$ norms respectively.
We denote the ball in terms of the $\ell_2$ norm centered at $x\in \Rd$ of radius $\varepsilon$ by $B_\varepsilon (x)$. We denote the set of nonnegative reals by $\rpl$ and the set of nonnegative integers by $\zpl$. 

\section{More Definitions and Fundamentals}
\seceq
In addition to the standard definitions given in the previous section, here, we introduce some new concepts that we will use to study contractible cubical sets.
\begin{definition}[Regularity]
    Given a subset $\mathscr{A}$ of $\Rd$, a point $x\in \mathscr A$ is called a \emph{regular point of $\mathscr A$}, if and only if it satisfies at least one of the following conditions. 
    \begin{enumerate}
        \item It is an interior point of $\mathscr A$, i.e., $\exists \varepsilon >0$ such that
        \begin{align*}
            B_\varepsilon (x) \subset \mathscr A.
        \end{align*}
        \item 
            The set 
            \[
             B_\varepsilon (x) \cap \mathscr A \setminus \{x\}   
            \]
    is \emph{contractible} (homotopy equivalent to a single point) for all $\varepsilon>0$ small enough.
    \end{enumerate}
\end{definition}    

    \begin{definition}
    A cubical set $\bs X$ is called a \emph{regular cubical set} if all $x\in|\bs X|$ is a regular point of $|\bs X|$. 
\end{definition}

\begin{example}
For $d=2$ and $d=3$, some examples of irregular points are illustrated in Figure~\ref{irregpiclabel}.

\begin{figure}[ht]
\centering
\resizebox{12cm}{!}{\begin{tikzpicture}[scale=1, line join=round, line cap=round]

\tikzstyle{visible edge}=[black, very thick]
\tikzstyle{hidden edge}=[black, dotted]

\tikzstyle{shared edge}=[red, ultra thick, dotted]

\coordinate (X1) at (-8,2);
\coordinate (Y1) at (-6,2);
\coordinate (Z1) at (-6,4);
\coordinate (T1) at (-8,4);

\coordinate (X2) at (-6,0);
\coordinate (Y2) at (-4,0);
\coordinate (Z2) at (-4,2);
\coordinate (T2) at (-6,2);

\fill[color=gray!40] (X1)--(Y1)--(Z1)--(T1)--cycle;
\draw[visible edge] (X1)--(Y1)--(Z1)--(T1)--cycle;
\fill[color=gray!40] (X2)--(Y2)--(Z2)--(T2)--cycle;
\draw[visible edge] (X2)--(Y2)--(Z2)--(T2)--cycle;

\fill[red] (-6,2) circle (2.5pt);

\node[anchor=center] (Label) at (-2, 2.5) {Irregular Points};

\draw[->, thick] ($(Y1)$) .. controls (-4, 3) .. (Label);

\def\cube#1#2#3#4{
    \coordinate (A#4) at (#1,#2);
    \coordinate (B#4) at (#1+#3,#2);
    \coordinate (C#4) at (#1+#3,#2+#3);
    \coordinate (D#4) at (#1,#2+#3);
    \coordinate (E#4) at (#1+#3*0.5,#2+#3*0.5);
    \coordinate (F#4) at (#1+#3*1.5,#2+#3*0.5);
    \coordinate (G#4) at (#1+#3*1.5,#2+#3*1.5);
    \coordinate (H#4) at (#1+#3*0.5,#2+#3*1.5);
}

\cube{0}{0}{2}{1}

\cube{2}{2}{2}{2}

\foreach \i in {1,2} {
    \shade[left color=gray!10, right color=gray!50, opacity=0.7] (B\i)--(F\i)--(G\i)--(C\i)--cycle;
    \shade[top color=gray!10, bottom color=gray!50, opacity=0.7] (C\i)--(D\i)--(H\i)--(G\i)--cycle;
    
    \draw[visible edge] (A\i)--(B\i)--(C\i)--(D\i)--cycle; 

    \ifthenelse{\i=1}{
    \draw[visible edge] (C\i)--(B\i)--(F\i)--(G\i);}{\draw[visible edge] (C\i)--(B\i)--(F\i)--(G\i)--cycle;} 
    
    \draw[visible edge] (D\i)--(H\i)--(G\i); 
    
    \draw[hidden edge] (A\i)--(E\i)--(F\i);
    \draw[hidden edge] (E\i)--(H\i);
}

\shade[left color=gray!30, right color=gray!70, opacity=0.7] (A1)--(B1)--(C1)--(D1)--cycle;
\shade[left color=gray!30, right color=gray!70, opacity=0.7] (A2)--(B2)--(C2)--(D2)--cycle;

\draw[shared edge] (G1)--(A2);

\draw[->, thick] ($(G1)!0.5!(A2)$) .. controls (1, 4) and (-0.5, 3) .. (Label);

\end{tikzpicture}}
\caption{Illustration of irregular points for $d=2$ and $d=3$}
\label{irregpiclabel}
\end{figure}

\end{example}

\begin{remark}\label{reglocrem}
    Note that regularity is a \emph{local property}. That is, for an $x\in \mathscr A\cup \mathscr B$ with $\mathscr A,\mathscr B\subset \Rd$, if
    \begin{align} \label{regloceqn}
        B_\varepsilon(x) \cap \mathscr A = B_\varepsilon(x) \cap \mathscr B
    \end{align}
    for all $\varepsilon>0$ small enough, then
    $x$ is a regular point of $\mathscr A$ if and only if it is  a regular point of $\mathscr B$. 
\end{remark}

\begin{definition}[\rev{Community}]
    Given a cubical set $\bs X$ and $X\in\mathcal{K}^d$, we call the cubical set
    \begin{align*}
        \mc N_{\bs X}(X)\coloneqq X\cup \left\{\bigcup_{\stackrel{Y\in \bs X}{|Y|\cap |X| \neq \varnothing}} Y\right\}
    \end{align*}
    the community of $X$ in $\bs X$. We will also sometimes refer to the geometric realization $|\mc N_{\bs X}(X)|$ as the community when the context is clear. 
\end{definition}
\begin{remark}
    Note that 
    \[
        |\mc N_{\bs X} (X)| =  |X| \cup \left\{y\in |\bs X| : \|y-\ctr(X)\|_{\infty} \leq  \frac{3}{2} \right\}.
    \]
\end{remark}

\begin{definition}
    The \emph{\rev{outer perimeter}} of the community $|\mc N_{\bs X} (X)|$ is defined as the set
     \[ \left\{y\in |\mc N_{\bs X} (X)| : \|y-\ctr(X)\|_{\infty} = \frac{3}{2} \right\}. \]
\end{definition}
See the figure below for an illustration of the community and its outer parameter when $d=2$.
\begin{figure}[ht] 
\centering
\begin{tikzpicture}[scale=1, line join=round, line cap=round]

\foreach \y in {0, 1, 2}
{
    \draw[very thick, fill=gray!40] (0,\y) rectangle ++(1,1);
}
\draw[very thick, fill=gray!40] (1,0) rectangle ++(1,1);

\draw[very thick, fill=gray!40] (1,1) rectangle ++(1,1);

\node[anchor=center] (Label) at (1.5, 1.5) {$X$};

\draw[line width=3 pt,blue] (1,3)--(0,3);
\draw[line width=3 pt,blue] (0,3)--(0,0);
\draw[line width=3 pt,blue] (0,0)--(2,0);

\end{tikzpicture}
\caption{The cubes that compose the community of $X$ and its outer perimeter shown as thick blue line}  
\label{outerperimfig}
\end{figure}

Next, we give our first observation regarding irregularity, which shows that irregular points of a community should be ``inside'' of it. 
\begin{lemma}\label{irregperimlem}
  For any cubical set $\bs X$ and $X\in\mathcal{K}^d$, the community $|\mc N_{\bs X}(X)|$ cannot have an irregular point on its outer perimeter.    
\end{lemma}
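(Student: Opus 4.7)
The plan is to verify condition~(2) of regularity for a perimeter point $y$ of $|\mc N_{\bs X}(X)|$, i.e.\ to show that $B_\varepsilon(y)\cap|\mc N_{\bs X}(X)|\setminus\{y\}$ is contractible for all sufficiently small $\varepsilon>0$. Condition~(1) is unavailable since $y$ lies on the boundary of the axis-aligned box $\{x:\|x-\ctr(X)\|_\infty\leq 3/2\}$ containing $|\mc N_{\bs X}(X)|$. The reduction is to contractibility of the \emph{link} of $y$ in $|\mc N_{\bs X}(X)|$, i.e., the set of unit directions from $y$ into the realization.

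After translating so that $\ctr(X)=0$, I pick an index $i_0$ with $|y_{i_0}|=3/2$ and assume $y_{i_0}=3/2$ without loss of generality. Any cube $Y\in\mc N_{\bs X}(X)$ with $y\in|Y|$ has $\ctr(Y)\in\{-1,0,1\}^d$, and the constraint $y_{i_0}\in[\ctr(Y)_{i_0}-\tfrac12,\ctr(Y)_{i_0}+\tfrac12]$ forces $\ctr(Y)_{i_0}=1$. Hence $|Y|\subset\{x:x_{i_0}\leq 3/2\}$, with $y_{i_0}$ the maximal $x_{i_0}$-value attained on $|Y|$. This is the key geometric observation: every cube of $\mc N_{\bs X}(X)$ touching $y$ sits on one side of the hyperplane $x_{i_0}=3/2$.

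Because each cube is convex and contains $y$, a standard radial argument yields, for small $\varepsilon>0$, a homotopy equivalence between $B_\varepsilon(y)\cap|\mc N_{\bs X}(X)|\setminus\{y\}$ and the link
\[
L \;:=\; \bigcup_{Y\in\mc N_{\bs X}(X),\,y\in|Y|} L(y,Y) \;\subset\; S^{d-1},
\]
where $L(y,Y)=\{v\in S^{d-1}:y+tv\in|Y|\text{ for all sufficiently small }t>0\}$. Each $L(y,Y)$ is the intersection of $S^{d-1}$ with the closed convex polyhedral tangent cone $C_Y$ of $|Y|$ at $y$, and by the half-space observation lies in the closed hemisphere $\{v\in S^{d-1}:v_{i_0}\leq 0\}$. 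Set $v^*:=-e_{i_0}$: its sole nonzero coordinate is $v^*_{i_0}=-1$, so $v^*$ satisfies every sign constraint (of the form $v_j\leq 0$ or $v_j\geq 0$) that can appear in any $L(y,Y)$, and consequently $v^*\in L(y,Y)$ for all cubes $Y$ containing $y$.

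Finally, I would define $H_t(v):=\frac{(1-t)v+tv^*}{\|(1-t)v+tv^*\|}$ on $L\times[0,1]$. No $v\in L$ is antipodal to $v^*=-e_{i_0}$, since $v_{i_0}\leq 0$ rules out $v=e_{i_0}$; the denominator is therefore positive and $H_t$ is continuous. For each $Y$, convexity of the cone $C_Y$ gives $(1-t)v+tv^*\in C_Y$ whenever $v\in L(y,Y)$, and normalization keeps the image in $C_Y\cap S^{d-1}=L(y,Y)$. Hence $H_t$ is a strong deformation retraction of $L$ onto $\{v^*\}$, so $L$, and therefore $B_\varepsilon(y)\cap|\mc N_{\bs X}(X)|\setminus\{y\}$, is contractible, establishing regularity of $y$. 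The main technical step is the half-space observation; once it is in hand, the common pole $v^*$ furnishes a uniform contracting center and no delicate combinatorial analysis of how the cubes around $y$ fit together is required.
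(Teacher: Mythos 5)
Your proof is correct and follows essentially the same route as the paper: both hinge on the observation that every cube of $\mc N_{\bs X}(X)$ containing a perimeter point lies on the inward side of the supporting hyperplane, so all such cubes share a common inward direction that serves as a contraction center for the punctured neighborhood. Your link-and-tangent-cone formulation, with the geodesic homotopy to $v^*=-e_{i_0}$, is in fact a slightly more careful rendering of the paper's final step, where the union of sets retracting to a common segment is asserted to be contractible.
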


\begin{proof}
    Assume $x=(x_1,\ldots, x_d)$ is an irregular point of $|\mc N_{\bs X} (X)|$ on its outer perimeter. 
     Also assume, without loss of generality, that $x$ is on the hyperplane $\left\{z+\frac{1}{2}\right\}\times \mathbb{R}^{d-1}$ for some $z\in\mathbb Z$, and that 
    for all $\varepsilon$ small enough,
     \begin{align*}
         (x_1, x_1+\varepsilon]\times \{x_2\}\times\ldots\times \{x_d\}\subset |\mc N_{\bs X}(X)|, 
     \end{align*}
      while
     \begin{align*}
         [x_1-\varepsilon, x_1)\times \{x_2\}\times\ldots\times \{x_d\}
     \end{align*}
does not intersect $|\mc N_{\bs X}(X)|$. We observe that for every $d$-dimensional cube $Y\in \mc N_{\bs X}(X)$ with $x\in |Y|$, 
\begin{align*}
    (x_1, x_1+\varepsilon]\times \{x_2\}\times\ldots\times \{x_d\}\subset |Y|,
\end{align*}
and therefore, $B_\varepsilon(x)\cap |Y|\setminus\{x\}$ has a deformation retraction to $(x_1, x_1+\varepsilon]\times \{x_2\}\times\ldots\times \{x_d\}$. 
Since
\begin{align*}
    B_\varepsilon(x)\cap |\mc N_{\bs X}(X)| \setminus\{x\} = \bigcup_{Y\in \mc N_{\bs X}(X)} B_\varepsilon(x)\cap |Y|\setminus\{x\},
\end{align*}
and each set that composes the union on the right hand side has a deformation retraction to  
$(x_1, x_1+\varepsilon]\times \{x_2\}\times\ldots\times \{x_d\}$, which is contractible, we conclude $B_\varepsilon(x)\cap |\mc N_{\bs X}(X)| \setminus\{x\}$ has to be contractible. This, however, is in contradiction with our initial assumption that $x$ was an irregular point of $|\mc N_{\bs X}(X)|$, concluding the proof.
\end{proof}

We establish in the lemma below that regularity of a cubical set is equivalent to the \emph{local} regularity of the communities of its cubes.
\begin{lemma}\label{regiffneighlem}
 The cubical set $\bs X$ is regular if and only if $\mc N_{\bs X}(X)$ is so for all $X\in\bs X$.    
\end{lemma}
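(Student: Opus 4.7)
The plan is to reduce both directions to Remark~\ref{reglocrem} by showing that, at any point $y$ we care about, a small ball around $y$ in $|\bs X|$ coincides with the same small ball in $|\mc N_{\bs X}(X)|$ for an appropriately chosen $X\in\bs X$. Lemma~\ref{irregperimlem} then takes care of the annoying points on the perimeter of the neighborhood.

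For the easier implication ($\Leftarrow$), I would pick an arbitrary $y\in|\bs X|$ and choose some $X\in\bs X$ with $y\in|X|$. Then $\|y-\ctr(X)\|_\infty\le \frac12$, so for every sufficiently small $\varepsilon>0$ the ball $B_\varepsilon(y)$ is contained in the open cube $\{z:\|z-\ctr(X)\|_\infty<\frac{3}{2}\}$. The key structural remark is that if a $d$-cube $Z\in\mc K^d$ satisfies $|Z|\cap B_\varepsilon(y)\ne\varnothing$, then $\|\ctr(Z)-\ctr(X)\|_\infty<2$, and because cube centers differ by integers this forces $\|\ctr(Z)-\ctr(X)\|_\infty\le 1$, i.e., $|Z|\cap|X|\ne\varnothing$. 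Hence every cube of $\bs X$ that contributes to $B_\varepsilon(y)\cap|\bs X|$ already lies in $\mc N_{\bs X}(X)$, giving $B_\varepsilon(y)\cap|\bs X|=B_\varepsilon(y)\cap|\mc N_{\bs X}(X)|$. Regularity of $y$ in $|\mc N_{\bs X}(X)|$ then transfers to regularity in $|\bs X|$ via Remark~\ref{reglocrem}.

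For the other direction ($\Rightarrow$), fix $X\in\bs X$ and take an arbitrary $y\in|\mc N_{\bs X}(X)|$. If $y$ lies on the perimeter of $|\mc N_{\bs X}(X)|$, Lemma~\ref{irregperimlem} tells us directly that $y$ is regular in $|\mc N_{\bs X}(X)|$, so nothing to check. Otherwise $\|y-\ctr(X)\|_\infty<\frac{3}{2}$, and the same center-distance argument as above shows that for small $\varepsilon$ only cubes of $\mc N_{\bs X}(X)$ can meet $B_\varepsilon(y)$, so again $B_\varepsilon(y)\cap|\bs X|=B_\varepsilon(y)\cap|\mc N_{\bs X}(X)|$. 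Since $\bs X$ is assumed regular, $y$ is regular in $|\bs X|$, and by Remark~\ref{reglocrem} regular in $|\mc N_{\bs X}(X)|$ as well.

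The bulk of the argument is really the local identification of $|\bs X|$ and $|\mc N_{\bs X}(X)|$ inside small balls; the potential obstacle is making sure that the interior-of-perimeter case is handled cleanly, since any $y$ that happens to sit inside $|X|$ but also on the boundary of $|\mc N_{\bs X}(X)|$ would be troublesome — but that cannot occur, because $y\in|X|$ forces $\|y-\ctr(X)\|_\infty\le\frac12<\frac32$. With that observation in hand the proof is a short bookkeeping exercise combining Remark~\ref{reglocrem} and Lemma~\ref{irregperimlem}.
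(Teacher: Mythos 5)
Your proof is correct and follows essentially the same route as the paper: combine Remark~\ref{reglocrem} with Lemma~\ref{irregperimlem}, handling perimeter points separately and identifying $B_\varepsilon(y)\cap|\bs X|$ with $B_\varepsilon(y)\cap|\mc N_{\bs X}(X)|$ elsewhere. Your center-distance argument just makes explicit the verification of hypothesis \eqref{regloceqn} that the paper leaves implicit.
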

\begin{proof}
    Assume $x\in\Rd$ is an irregular point of $|\mc N_{\bs X}(X)|$. This requires $x$ be not in the outer perimeter of 
    $|\mc N_{\bs X}(X)|$ due to Lemma~\ref{irregperimlem}. For all such $x$, though, \eqref{regloceqn} holds for $\mathscr A = |\mc N_{\bs X}(X)|$ and $\mathscr B=|\bs X|$, and therefore $x$ is an irregular point of $|\bs X|$ as well, which implies $\bs X$ is irregular.  
        
    Assume now that $x$ is an irregular point of $|\bs X|$. Choose an arbitrary $X\in\bs X$ such that $x\in |X|$. Using Remark~\ref{reglocrem} again for the same choice of $\mathscr{A}$ and $\mathscr B$ as above, we conclude 
     $x$ is also an irregular point of $|\mc N_{\bs X}(X)|$. We proved if $x$ is an irregular point of $|\mc N_{\bs X}(X)|$ then it is also an irregular point of $|\bs X|$, and if $x$ is an irregular point of $\bs X$ then there exists $X\in \bs X$ such that it is irregular in $|\mc N_{\bs X}(X)|$. These two assertions prove the claim in the lemma.
\end{proof}

Our last lemma in this section will be used in the next section where we will define the objects of main interest in this paper.

\begin{lemma}\label{commcontractlem}
    For any cubical set $\bs X$ and $X\in \bs X$, $|\mc N_{\bs X}(X)|$ is contractible. 
\end{lemma}
\begin{proof}
Note the contractibility of realization of the cube complex $|\mc N_{\bs X}(X)|$, as a CW-complex, is equivalent to the existence of a deformation retraction to a point (see Proposition 2.5 of \cite{milnorcons}). A deformation retraction from $|\mc N_{\bs X}(X)|$ to any point of $|X|$ can be defined trivially.    
\end{proof}



\section{Clumps}
\seceq
We define below the main object of study in this paper, and the partial order on them underlying the Markov process we will define.

\begin{definition}[Clump]
A cubical set $\bs X$ is called a \emph{clump} if it is regular and its geometric realization $|\bs X|$ is contractible.
\end{definition}


\begin{definition}[\rev{Indent action and $k$-Indentability}]
A clump $\bs X$ is called \emph{$k$-indentable} if there exist distinct cubes $X_1,\ldots,X_k \in\bs X$ such that for each $i\in \{1,\ldots,k\}$, $\bs X\setminus X_i$ is a clump, and there exists a strong deformation retraction $f_i:|\bs X|\times [0,1]\to |\bs X|$ from $|\bs X|$ to $|\bs X\setminus X_i|$ satisfying $f(x,t)\in |X_i|$ for all $x\in|X_i|$ and $t\in[0,1)$. We say that $\bs X$ is indentable ``to $\bs X\setminus X_i$'' or ``through $X_i$'' for any $i\in \{1,\ldots,k\}$, and call the set $\{X_1,\ldots,X_k\}$ its \emph{indent set}. We call 1-indentable clumps, sometimes, briefly, indentable. 
\end{definition}
Note that, obviously, if $\bs X$ is $k$-indentable then it is $\ell$-indentable for all $\ell\leq k$.
\begin{figure}[ht] 
\centering
\begin{tikzpicture}[scale=1, line join=round, line cap=round]

\foreach \x in {0,5}
{
\foreach \y in {0, 1, 2}
{
    \draw[very thick, fill=gray!40] (\x,\y) rectangle ++(1,1);
}
\draw[very thick, fill=gray!40] (\x+1,0) rectangle ++(1,1);
}

\draw[very thick, fill=gray!40] (1,1) rectangle ++(1,1);

\draw[thick,-stealth](3,1.5) -- (4,1.5);

\node[anchor=center] (Label) at (1.5, 1.5) {$X$};

\end{tikzpicture}
\caption{Indent action for a cubical set in $\mathbb{R}^2$}  
\label{indentfig}
\end{figure}

The following lemma establishes that \emph{indentability} through a cube is a \emph{community} property. This will make sure that the rules of actions of our dynamic model for cubical sets that we will define later on will be local. 
\begin{lemma}\label{neighbcollem}
 The clump $\bs X$ is indentable through some $X\in\bs X$ if and only if $\mc N_{\bs X}(X)$ is a  clump \rev{indentable through $X$}. 
\end{lemma}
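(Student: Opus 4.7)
The plan is to split the clump condition into its two components, regularity and contractibility, and show each one is determined by the local configuration $\mc N_{\bs X}(X)$.

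For regularity, I would invoke the locality principle (Remark~\ref{reglocrem}). The key geometric observation is that any $p\in\Rd$ with $\|p-\ctr(X)\|_\infty<3/2$ can only belong to cubes of $\bs X$ whose centers lie at $\ell_\infty$-distance strictly less than $2$, hence at most $1$, from $\ctr(X)$---that is, only cubes in $\mc N_{\bs X}(X)$. Consequently, in a small ball around such a $p$, the sets $|\bs X\setminus X|$ and $|\mc N_{\bs X}(X)\setminus X|$ coincide, and regularity at $p$ holds in one iff it holds in the other. For $p\notin|X|$, removing $X$ has no local effect on $|\bs X|$, so regularity at $p$ is automatic from regularity of $\bs X$; since perimeter points of $\mc N_{\bs X}(X)$ satisfy $\|p-\ctr(X)\|_\infty=3/2$, they are not in $|X|$, and Lemma~\ref{irregperimlem} combined with the argument above shows that $\mc N_{\bs X}(X)$ is itself always regular. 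Altogether, $\bs X\setminus X$ is regular iff $\mc N_{\bs X}(X)\setminus X$ is.

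For contractibility I would use a pushout/Mayer--Vietoris argument. Let $R$ be the closure of $|\bs X|\setminus|\mc N_{\bs X}(X)|$ and $Q=|\mc N_{\bs X}(X)|\cap R$. Since $|X|$ sits at $\ell_\infty$-distance at most $1/2$ from $\ctr(X)$ while $R$ sits at distance at least $3/2$, we have $|X|\cap R=\varnothing$ and both
\begin{align*}
|\bs X|=|\mc N_{\bs X}(X)|\cup_Q R,\qquad |\bs X\setminus X|=|\mc N_{\bs X}(X)\setminus X|\cup_Q R
\end{align*}
are pushouts along the \emph{same} subspace $Q$, with all inclusions cofibrations of cubical subcomplexes. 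If $|\mc N_{\bs X}(X)|$ and $|\mc N_{\bs X}(X)\setminus X|$ are both contractible, the inclusion between them is a homotopy equivalence relative to $Q$, so the cofibration-pushout property gives a homotopy equivalence $|\bs X\setminus X|\simeq|\bs X|$, and contractibility of the latter transfers. Conversely, given $|\bs X|$ and $|\bs X\setminus X|$ contractible, the two Mayer--Vietoris sequences together with van Kampen force $H_*$ and $\pi_1$ of $|\mc N_{\bs X}(X)\setminus X|$ to match those of $|\mc N_{\bs X}(X)|$, and Whitehead then closes the argument.

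The main obstacle in this plan is the preliminary fact that $|\mc N_{\bs X}(X)|$ is itself always contractible when $\bs X$ is a clump. This is a combinatorial-topological statement about regular cubical subsets of the $3^d$-window centered at $X$: no non-central position in the window can be fully enclosed (every such position is on the boundary of the window), and the vertex- and lower-face regularity of $\bs X$ rules out the ``diagonal-only'' attachments that could otherwise produce non-simply-connected neighborhoods. I expect this to follow from a face-adjacency collapse argument to $X$, or in low dimensions by a direct case analysis over the possible configurations of neighbors of $X$.
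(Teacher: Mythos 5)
Your treatment of regularity and of the ``if'' direction is sound. For regularity you use the same locality argument (Remark~\ref{reglocrem} together with Lemma~\ref{irregperimlem}) as the paper. For contractibility in the ``if'' direction you replace the paper's hands-on construction --- which concatenates a deformation retraction of $|\bs X|$ onto a point of $|\mc N_{\bs X}(X)\setminus X|$ with a deformation retraction of $|\mc N_{\bs X}(X)\setminus X|$, switching at the entry time into the neighborhood --- by the gluing lemma applied to the two pushouts $|\bs X|=|\mc N_{\bs X}(X)|\cup_Q R$ and $|\bs X\setminus X|=|\mc N_{\bs X}(X)\setminus X|\cup_Q R$. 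That is a genuinely different and arguably cleaner route: both neighborhood realizations are contractible by hypothesis, so the inclusion between them is a homotopy equivalence, and the cofibration pushout gives $|\bs X\setminus X|\simeq|\bs X|$.

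The genuine gap is in the ``only if'' direction, and it is exactly the point you flag as ``the main obstacle'': you never prove that $|\mc N_{\bs X}(X)|$ is contractible when $\bs X$ is a clump; you only express the expectation that a face-adjacency collapse or a case analysis would give it. Without that fact, your Mayer--Vietoris/van Kampen comparison only shows that $|\mc N_{\bs X}(X)\setminus X|$ has the same invariants as $|\mc N_{\bs X}(X)|$, which yields contractibility of neither, so the reverse implication is not established. (The paper closes this step by arguing that a non-contractible $|\mc N_{\bs X}(X)|$ or $|\mc N_{\bs X}(X)\setminus X|$ would bound a hole that $|\bs X|$, resp.\ $|\bs X\setminus X|$, would also bound, contradicting contractibility of the clump; you need some argument of this kind.) A secondary, fixable issue: to invoke Whitehead you need the \emph{inclusion} $|\mc N_{\bs X}(X)\setminus X|\hookrightarrow|\mc N_{\bs X}(X)|$ to induce isomorphisms, so you should run the five lemma on the induced map of Mayer--Vietoris sequences (and use the groupoid version of van Kampen if $Q$ is disconnected), rather than deduce an abstract isomorphism of direct sums and cancel the $R$-summands.
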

\begin{proof}
We first note that a strong deformation retraction $f$ from $|\mc N_{\bs X}(X)|$ to $|\mc N_{\bs X}(X)\setminus X|$ such that $f(x,t)\in |X_i|$ for all $x\in|X|$ and $t\in[0,1)$ trivially defines a strong deformation retraction from $|\bs X|$ to $|\bs X\setminus X|$ with the same property, and vice versa. 
    To prove the ``if'' part, assume for an $X\in \bs X$, $\mc N_{\bs X}(X)$ is an indentable clump through $X$, which implies $\mc N_{\bs X}(X)\setminus X$ is regular.  We will show that $\bs X\setminus X$ is regular. For that, take a point $x\in |\bs X\setminus X|$. If $x\in |X|$,
    using Remark~\ref{reglocrem} with the choices of $\mathscr A = |\bs X\setminus X|$ and $\mathscr B = |\mc N_{\bs X}(X)\setminus X|$, and that $\mc N_{\bs X}(X)\setminus X$ is regular, $x$ has to be regular in $|\bs X\setminus X|$.    
    If, on the other hand, $x\notin |X|$, we use Remark~\ref{reglocrem} with $\mathscr A = |\bs X\setminus X|$, $\mathscr B = |\bs X|$, and the fact that $\bs X$ was regular since it was a clump, to obtain that $x$ is regular in $|\bs X\setminus X|$.  
     Together with the existence of a strong deformation retraction, this establishes that $\bs X\setminus X$ is a clump, and $\bs X$ is indentable through $X$.

     To prove the reverse implication, assume $\bs X$ is an indentable clump through some $X\in\bs X$. For the regularity of $\mc N_{\bs X} (X)\setminus X$, take an $x\in |\mc N_{\bs X} (X)\setminus X|$. If $x$ is on the outer perimeter of $|\mc N_{\bs X} (X)\setminus X|$, then $x$ is regular in $|\mc N_{\bs X} (X)\setminus X|$ due to Lemma~\ref{irregperimlem}. If otherwise $x$ is not on the outer perimeter of $|\mc N_{\bs X} (X)\setminus X|$, we invoke Remark~\ref{reglocrem} with $\mathscr{A} = |\bs X\setminus X|$ and $\mathscr{B} = |\mc N_{\bs X} (X)\setminus X|$ to conclude $x$ is regular in $|\mc N_{\bs X} (X)\setminus X|$, which proves that $\mc N_{\bs X} (X)\setminus X$ is regular. Repeating the argument with some $x\in |\mc N_{\bs X}(X)|$ with $\mathscr{A} = |\bs X|$ and $\mathscr{B} = |\mc N_{\bs X} (X)|$ proves that $\mc N_{\bs X} (X)$ is also regular. 
     Combining this with the contractibility of $|\mc N_{\bs X} (X)|$ due to  Lemma~\ref{commcontractlem}, and the existence of strong deformation retraction from $|\mc N_{\bs X} (X)|$ to $|\mc N_{\bs X} (X)\setminus X|$ establishes that $\mc N_{\bs X} (X)$ is an indentable clump through $X$.    
\end{proof}

Below, we define the inverse of the indent action, expansion.
\begin{definition}[Expand]\label{expanddef}
Given a cubical set $\bs X$, $\card(\bs X)\geq 1$ and a cube $Y\in \mathcal{K}^d\setminus \bs X$, we call the cubical set $\bs X\cup Y$ 
the \emph{expansion} of $\bs X$ \emph{through $Y$}.
A clump $\bs X$ is called \emph{expandable} through $Y\in \mathcal{K}^d\setminus \bs X$ if $\bs X\cup Y$ is indentable through $Y$.
\end{definition}

Due to Lemma~\ref{neighbcollem} and by Definition~\ref{expanddef} we have the following.
\begin{corollary}
\label{neighbexplem}
 The clump $\bs X$ is expandable through some $Y\in\mathcal{K}^d$ if and only if $\mc N_{\bs X}(Y)$ is a clump indentable through $Y$ and $Y\notin\bs X$. 
\end{corollary}

 Next, we give definitions that we will use in our study of cubical sets when $d=2$. We establish the definitions in any $d$, for the sake of generality, nevertheless.  
\begin{definition}[Paths and Loops]\label{pathdef}
    A sequence of distinct $d$-dimensional elementary cubes $P_1,\ldots, P_n$ is called a \emph{path (of length $n\geq 2$)} if $P_i\cap P_{i+1} \in \mathcal{K}^{d-1}$ for all $1\leq i\leq n-1$. The path $P_1,\ldots, P_n$ is a \emph{loop} if $P_1\cap P_n \in \mathcal{K}^{d-1}$ and $n\geq 3$. The term ``path'' will sometimes refer to the cubical set $\bs P\coloneqq \{P_1,\ldots ,P_n\}$   
    when there is no room for confusion. We say a path $\bs P$ is in the cubical set $\bs X$ if $\bs P\subseteq \bs X$.
\end{definition}

\begin{definition}[Scaffold]
    Given a path $\bs P = \{P_1,\ldots,P_n\}$, we call the set
    \[ \scf(\bs P) \coloneqq 
    \begin{cases}
        \left\{\bigcup_{i=1}^{n-1}  \overbar{\ctr(P_i) \ctr(P_{i+1})}\right\} \cup \overbar{\ctr(P_n) \ctr(P_{1})} \quad &\text{if $\bs P$ is a loop}, \\
        \bigcup_{i=1}^{n-1}  \overbar{\ctr(P_i) \ctr(P_{i+1})} &\text{otherwise.}
    \end{cases}
       \]
    the \emph{scaffold} of $\bs P$, where $\overbar{ab}\subset\Rd$ denotes the closed line segment connecting $a,b\in \Rd$.  
\end{definition}
We have a similar definition for general cubical sets as follows.

\begin{definition}[Skeleton]\label{defSkel}
    Given a cubical set $\bs X$ we call the set
     \[ \left\{\bigcup_{X\in \bs X} \ctr(X)\right\} \cup \left\{ \bigcup_{\stackrel{X,Y\in\bs X}{X\cap Y \in \mathcal{K}^{d-1} }} \overbar{\ctr(X) \ctr(Y)} \right\}  \]
    the \emph{skeleton} of $\bs X$.
 \end{definition}

We next define a concept only for $d=2$ case, to be used in the next section.
\begin{definition}[\rev{Inland}, $d=2$]
Given a loop $\bs L$, composed of subsequent distinct cubes $L_1,\ldots L_n$, we define its \emph{inland}, $\inl(\bs L)\subset \mathbb{R}^2$, as the union of $|\bs L|$ and the subset of $\mathbb{R}^2$ bounded by its scaffold.  
\end{definition}

Now that we have established the fundamentals of \emph{clumps} and actions defined on them, we are ready to focus on the planar special case $(d=2)$, where we can prove some stronger statements.

\section{Planar Clumps}
\seceq
We state our first main theorem below.

\begin{theorem}\label{indentthm}
    All planar clumps are 2-indentable.
\end{theorem}

\begin{remark}\label{lineclumprem}
 Note that 2-indentability is the strongest notion of indentability that can be proven for all clumps of all sizes on the plane. The ``straight line'' clump shown in the Figure~\ref{lineclumpfig} below can be made arbitrarily large, but although it is 2-indentable (through first and last cubes), it is never 3-indentable.
 \begin{figure}[ht] 
 \centering
\begin{tikzpicture}[scale=1, line join=round, line cap=round]

\foreach \x in {0, 1, 2, 3}
    {
        \draw[very thick, fill=gray!40] (\x,0) rectangle ++(1,1);
    }

\fill[black] (4.5,0.5) circle (1pt);
\fill[black] (4.65,0.5) circle (1pt);
\fill[black] (4.8,0.5) circle (1pt);

\fill[black] (-0.5,0.5) circle (1pt);
\fill[black] (-0.65,0.5) circle (1pt);
\fill[black] (-0.8,0.5) circle (1pt);

\end{tikzpicture}
\caption{Line clump.}  
\label{lineclumpfig}
\end{figure}

\end{remark}

Before giving the proof of the theorem, we will give some auxiliary lemmas. Unless otherwise noted, we assume $d=2$ for the rest of this section. We start with  definitions of some specific cubical sets that will play an important role in our proof of Theorem~\ref{indentthm}. Note that for the rest of this section, we will continue calling our building blocks ``cubes'' in order to be consistent with the general notation, although one might argue calling them ``squares'' would have been more intuitive.

\begin{definition}[Rectangle Configuration]
    We define the \emph{2-by-3 rectangle configuration} as the cubical set $\bs R_6$ composed of 6 cubes (illustrated in Figure~\ref{rectfig}). More concretely,
    \begin{align*}
    \begin{split}
    \bs R_6 \coloneqq &\bigg\{\left[-\frac{3}{2}, -\frac{1}{2}\right]\times \left[\frac{1}{2}, \frac{3}{2}\right], \left[-\frac{1}{2}, \frac{1}{2}\right]\times \left[\frac{1}{2}, \frac{3}{2}\right],
    \left[\frac{1}{2}, \frac{3}{2}\right]\times \left[\frac{1}{2}, \frac{3}{2}\right] , \\
    &\left[-\frac{3}{2}, -\frac{1}{2}\right]\times \left[-\frac{1}{2}, \frac{1}{2}\right] ,\left[-\frac{1}{2}, \frac{1}{2}\right]\times \left[-\frac{1}{2}, \frac{1}{2}\right] ,
    \left[\frac{1}{2}, \frac{3}{2}\right]\times \left[-\frac{1}{2}, \frac{1}{2}\right]\bigg\}.
    \end{split}
    \end{align*}
    
\begin{figure}[ht]    
\centering
\begin{tikzpicture}[scale=1, line join=round, line cap=round]

\foreach \x in {0, 1, 2}
    \foreach \y in {0, 1}
    {
        \draw[very thick, fill=gray!40] (\x,\y) rectangle ++(1,1);
    }

\fill[black] (1.5,0.5) circle (1pt);
\node[anchor=north] (Label) at (1.5, 0.55) {$(0,0)$};

\end{tikzpicture}
\caption{2-by-3 rectangle configuration $\bs R_6$ with the origin of $\bb R^2$ for reference.}  
\label{rectfig}
\end{figure}

We call the 90-degree rotational symmetry of $\bs R_6$, defined in a straightforward way, $\bs R_6^\top$, which is illustrated in Figure~\ref{rectfigtrans}.
\begin{figure}[ht]  
\centering
\begin{tikzpicture}[scale=1, line join=round, line cap=round]

\foreach \x in {0, 1}
    \foreach \y in {0, 1, 2}
    {
        \draw[very thick, fill=gray!40] (\x,\y) rectangle ++(1,1);
    }

\fill[black] (1.5,1.5) circle (1pt);
\node[anchor=north] (Label) at (1.5, 1.55) {$(0,0)$};

\end{tikzpicture}
\caption{Rotational symmetry of $\bs R_6$, i.e., $\bs R_6^{\top}$}  
\label{rectfigtrans}
\end{figure}

We say that a cubical set $\bs X$ includes (or does have) a 2-by-3 rectangle (at $z$) if there exists $z\in \mathbb Z^2$ such that 
\[\bs X \supseteq \bs R_6 + z \]
or 
\[\bs X  \supseteq  \bs R_6^{\top} + z , \]
where $\bs X + z$ is the translation of $\bs X$ defined in the natural way. 
    
\end{definition}

\begin{definition}[Double-deck Configuration]
The cubical set \emph{double-deck configuration} is shown in Figure~\ref{deckfig}. A formal definition is straightforward and omitted to save space.
\begin{figure}[ht] 
\centering
\begin{tikzpicture}[scale=1, line join=round, line cap=round]

\foreach \x in {-1, 0, 1, 2}
    {
        \draw[very thick, fill=gray!40] (\x-0.5,-0.5) rectangle ++(1,1);
    }

\foreach \x in {0, 1}
    {
        \draw[very thick, fill=gray!40] (\x-0.5,0.5) rectangle ++(1,1);
    }

\fill[black] (0,0) circle (1pt);
\node[anchor=north] (Label) at (0, 0.05) {$(0,0)$};

\end{tikzpicture}
\caption{Double-deck configuration at the origin (0,0).} 
\label{deckfig}
\end{figure}

   We adopt the same terminology we introduced previously for the rectangular configuration. That is, we say a cubical set \emph{has a double-deck (configuration)} if it includes a translation or rotation of a double deck at $(0,0)$.
\end{definition}

\begin{definition}[Human Configuration]
 The cubical set \emph{Human configuration}, named after its resemblance to the silhouette of the human body, is shown in Figure~\ref{Humanfig}. The cross signs in the figure indicate that for a subset of a cubical set to be counted as Human, the corresponding locations should be empty in the cubical set. We use the same terminology regarding the inclusion of a Human configuration as we described for previous configurations. Note that, in contrast to the rectangle and double-deck configurations, Human also requires some locations outside of its borders be unoccupied. Furthermore, Human is 1-indentable since the community of the cube centered at $(0,-2)$ is indentable, \rev{which is implied by that there is no cube at $(0,-3)$.}  
 \begin{figure}[ht]
 \centering
\begin{tikzpicture}[scale=1, line join=round, line cap=round]

\foreach \x in {0, 1, 2}
    \foreach \y in {0, 1}
    {
        \draw[very thick, fill=gray!40] (\x,\y) rectangle ++(1,1);
    }

\foreach \x in {-1, 3}
    {
        \draw[very thick, fill=gray!40] (\x,1) rectangle ++(1,1);
    }

\foreach \y in {2, 3}
    {
        \draw[very thick, fill=gray!40] (1,\y) rectangle ++(1,1);
    }

\foreach \x in {0, 2}
    {
        \draw[very thick, fill=gray!40] (\x,-1) rectangle ++(1,1);
    }

\fill[black] (1.5,2.5) circle (1pt);
\node[anchor=north] (Label) at (1.5, 3.09) {$(0,0)$};
\fill[black] (1.5,0.5) circle (1pt);
\node[anchor=north] (Label) at (1.5, 1.09) {$(0,\hspace{-0.11em}-2)$};
\draw[very thick] (0,2) -- (1,3);
\draw[very thick] (0,3) -- (1,2);

\draw[very thick] (2,2) -- (3,3);
\draw[very thick] (2,3) -- (3,2);

\draw[very thick] (1,-1) -- (2,0);
\draw[very thick] (1,0) -- (2,-1);

\end{tikzpicture}
\caption{Human configuration at $(0,0)$.}  
\label{Humanfig}
\end{figure}
\end{definition}

Our proof of Theorem~\ref{indentthm} is based on the following idea. We can roughly categorize the clumps as the ``branchy'' ones and ``round'' ones. For round ones, 2-indentability is ensured by the fact that some cubes on the border of the clump will have indentable communities since they can neither be ``bottlenecks'' that keeps the connectivity nor be ``embedded deep'' whose removal would introduce a nontrivial 1-dimensional homology. The 2-indentability of branchy ones, on the other hand, follows from arguing that the longest path (branch), looks like a ``line'' and therefore its first and last cubes have indentable communities (see Remark~\ref{lineclumprem}). 

Now we state the series of lemmas that connect 2-indentability and the existence of special configurations defined above in a cubical set. The results until Lemma~\ref{norectlem} will be related to the 2-indentability of ``round'' clumps.

\begin{lemma}\label{ddeckcollpos}
    If a clump has a double-deck at the origin then its indent set includes a cube with the center $(x,y)$ where $y\geq 1$.
\end{lemma}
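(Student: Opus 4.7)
My plan is to exhibit a collapsible cube at height $y \geq 1$ by inspecting the top of $\bs X$. Take $X^* = (x^*, y^*) \in \bs X$ to be the cube whose center is lexicographically maximal in the order $(y, x)$: maximize $y$ first, then break ties by maximizing $x$. Since the double-deck places $(1,1) \in \bs X$, we immediately have $y^* \geq 1$, so any candidate at height $y^*$ automatically satisfies the required bound.

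By Lemma~\ref{neighbcollem} it suffices to check that $\mc N_{\bs X}(X^*) \setminus X^*$ is a clump. Extremality of $X^*$ confines the neighborhood to the $2\times 3$ block south-west of $X^*$; writing $L = (x^*-1, y^*)$, $BL = (x^*-1, y^*-1)$, $B = (x^*, y^*-1)$, $BR = (x^*+1, y^*-1)$, regularity of $\bs X$ at the corners $(x^* \pm \tfrac{1}{2}, y^* - \tfrac{1}{2})$ forces $BR \in \bs X \Rightarrow B \in \bs X$ and $BL \in \bs X \Rightarrow L \in \bs X$ or $B \in \bs X$. A direct case check on the admissible occupancy patterns of $\{L, BL, B, BR\}$ shows that $X^*$ is collapsible in every case except the \emph{notch} pattern, in which $L, B \in \bs X$ and $BL \notin \bs X$: removing $X^*$ then leaves $L$ and $B$ touching only at the corner $(x^* - \tfrac{1}{2}, y^* - \tfrac{1}{2})$, which is irregular.

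If the notch does not occur, $X^*$ itself lies in the collapse set and we are finished. Otherwise I switch to $X' := L$, still at height $y^* \geq 1$. Regularity of $\bs X$ at $(x^* - \tfrac{3}{2}, y^* - \tfrac{1}{2})$ combined with $BL \notin \bs X$ yields $(x^* - 2, y^*) \notin \bs X \Rightarrow (x^* - 2, y^* - 1) \notin \bs X$, so when the top-row segment through $X^*$ has length exactly two, $\mc N_{\bs X}(X') = \{X', X^*, B\}$ and removing $X'$ leaves the edge-sharing pair $\{X^*, B\}$, a clump. The only remaining subcase is that the top-row segment through $X^*$ extends at least three cubes leftward; here I plan an inductive walk along the segment, at each step either uncovering a removable tip (when the cube immediately south is absent from $\bs X$) or forcing sufficient structure in row $y^* - 1$ below to produce a collapsible interior cube with three consecutive occupied south-neighbors. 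In the special case $y^* = 1$, the double-deck's bottom row $(-1, 0), (0, 0), (1, 0), (2, 0) \in \bs X$ supplies this three-in-a-row condition automatically beneath $A_2 = (1, 1)$, making $A_2$ collapsible. The main obstacle is the leftward walk in the general notch case with a long top segment, which demands careful bookkeeping of the regularity constraints of $\bs X$ at each intermediate corner in order to guarantee that the induction terminates with a collapsible cube still at height at least $1$.
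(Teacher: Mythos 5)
Your extremal-cube setup is sound: the case analysis of $\mc N_{\bs X}(X^*)$ is correct (the only failure is the notch $L,B\in\bs X$, $BL\notin\bs X$), and the length-two subcase is handled correctly. But the remaining case is not a matter of ``careful bookkeeping'': the invariant your leftward walk is trying to establish --- that the top row contains a collapsible cube, or a cube with three occupied south-neighbors --- is simply false for clumps satisfying the hypothesis. Take the double-deck at the origin, attach the column of cubes centered at $(1,2),(1,3),(1,4)$, cap it with the top segment $(-1,4),(0,4),(1,4)$, and hang the single cube $(-1,3)$ under the left end. This is a regular, contractible cubical set containing the double-deck, with $y^*=4$. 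In its top row, $(1,4)$ and $(-1,4)$ are both notches (removing either leaves a corner-only contact between the adjacent top cube and the cube below), and removing $(0,4)$ splits its neighborhood into the two disjoint dominoes $\{(-1,4),(-1,3)\}$ and $\{(1,4),(1,3)\}$; so no top-row cube is collapsible, and no top-row cube has three consecutive occupied south-neighbors. Both branches of your plan terminate without output, even though collapsible cubes at positive height exist (e.g.\ $(-1,3)$, a dangling tip one level down). Your walk would have to descend below the top row, and the proposal contains no mechanism for doing so while keeping the height bounded below by $1$.

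The deeper issue is that you use the double-deck hypothesis only to conclude $y^*\geq 1$, and that cannot suffice: the five-cube arch with centers $(0,0),(0,1),(1,1),(2,1),(2,0)$ is a clump containing cubes at height $1$ whose collapse set is exactly $\{(0,0),(2,0)\}$, entirely at height $0$. So any correct proof must exploit the double-deck structurally, not just its altitude. The paper does this by contradiction: assuming no cube at positive height is collapsible, the neighborhoods of $(0,1)$ and $(1,1)$ successively force the four cubes above and beside them into $\bs X$, reproducing a double-deck one level higher; iterating contradicts finiteness of the clump. To salvage your approach you would need either to adopt such a propagation argument or to run an induction (say on $\card(\bs X)$ or on $y^*$) that justifies the descent; as written, the main case is a genuine gap.
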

\begin{proof}
   Assume a clump $\bs X$ has a double-deck at the origin and it does not have any cube centered at a positive $y$-coordinate in its indent set. We label the cubes of $\bs X$ in and around the configuration as in Figure~\ref{dbldeckprf}. Note that we do not yet know whether the nearby cubes are in the clump or not, hence they are drawn with lighter shade and bordered with dashed edges. 
   
   First, we notice that unless $X_8\in \bs X$, $\mc N_{\bs X} (X_5)$ would be indentable through $X_5$ no matter whether other cubes of $\mc N_{\bs X} (X_5)$ are in $\bs X$ or not, and since $X_5$ has center at $(0,1)$, this would contradict our assumption, therefore $X_8\in \bs X$. Similarly $X_9$ is also in $\bs X$ because otherwise $\mc N_{\bs X} (X_6)$ would be indentable. Given that now we know $\{X_8,X_9\}\subset X$, looking back to $\mc N_{\bs X} (X_5)$ again, we see that $\mc N_{\bs X} (X_5)$ would be indentable unless $X_7\in \bs X$. Repeating the same argument for $\mc N_{\bs X} (X_6)$, we conclude $\{X_7,X_{10}\}\subset \bs X$. Now the cubical set $\bs X$ looks as in Figure~\ref{dbldeckprf}-ii. Repeating the steps now for the double deck composed of cubes $\{X_7,X_5,X_6,X_{10}, X_8, X_9\}$, we argue that $\{X_{11},X_{12},X_{13},X_{14}\}\subset \bs X$. This configuration in $\bs X$ can be stacked up indefinitely, which would contradict that $\bs X$ is a finite set since it is a clump. Therefore we conclude the proof of the statement in the lemma. 
 \begin{figure}[ht]
 \centering
\begin{tikzpicture}[scale=1, line join=round, line cap=round]

\tikzmath{
\sepax = 6;}

\foreach \x in {-1, 0, 1, 2}
    {
        \draw[very thick, fill=gray!40] (\x-0.5,-0.5) rectangle ++(1,1);
    }

\foreach \x in {0, 1}
    {
        \draw[very thick, fill=gray!40] (\x-0.5,0.5) rectangle ++(1,1);
    }

\draw[very thick, fill=gray!10, dashed] (-1.5,0.5) rectangle ++(1,1);
\draw[very thick, fill=gray!10, dashed] (-0.5,1.5) rectangle ++(1,1);
\draw[very thick, fill=gray!10, dashed] (0.5,1.5) rectangle ++(1,1);
\draw[very thick, fill=gray!10, dashed] (1.5,0.5) rectangle ++(1,1);

\foreach \x in {-1, 0, 1, 2}
    {
        \draw[very thick, fill=gray!40] (\x-0.5+\sepax,-0.5) rectangle ++(1,1);
    }

\foreach \x in {0, 1}
    {
        \draw[very thick, fill=gray!40] (\x-0.5+\sepax,0.5) rectangle ++(1,1);
    }

\draw[very thick, fill=gray!40] (-1.5+\sepax,0.5) rectangle ++(1,1);
\draw[very thick, fill=gray!40] (-0.5+\sepax,1.5) rectangle ++(1,1);
\draw[very thick, fill=gray!40] (0.5+\sepax,1.5) rectangle ++(1,1);
\draw[very thick, fill=gray!40] (1.5+\sepax,0.5) rectangle ++(1,1);

\foreach \x in {0,\sepax}
{
\node[anchor=center] (Label) at (-1+\x, 0) {$X_1$};
\node[anchor=center] (Label) at (0+\x, 0) {$X_2$};
\node[anchor=center] (Label) at (1+\x, 0) {$X_3$};
\node[anchor=center] (Label) at (2+\x, 0) {$X_4$};
\node[anchor=center] (Label) at (0+\x, 1) {$X_5$};
\node[anchor=center] (Label) at (1+\x, 1) {$X_6$};

\node[anchor=center] (Label) at (-1+\x, 1) {$X_7$};
\node[anchor=center] (Label) at (0+\x, 2) {$X_8$};
\node[anchor=center] (Label) at (1+\x, 2) {$X_9$};
\node[anchor=center] (Label) at (2+\x, 1) {$X_{10}$};
}

\node[anchor=center] (Label) at (-1.5, 3.8) {\textbf{i)}};

\node[anchor=center] (Label) at (-1.5+\sepax, 3.8) {\textbf{ii)}};

\draw[very thick, fill=gray!10, dashed] (-1.5+\sepax,1.5) rectangle ++(1,1);
\draw[very thick, fill=gray!10, dashed] (-0.5+\sepax,2.5) rectangle ++(1,1);
\draw[very thick, fill=gray!10, dashed] (0.5+\sepax,2.5) rectangle ++(1,1);
\draw[very thick, fill=gray!10, dashed] (1.5+\sepax,1.5) rectangle ++(1,1);
\node[anchor=center] (Label) at (-1+\sepax, 2) {$X_{11}$};
\node[anchor=center] (Label) at (0+\sepax, 3) {$X_{12}$};
\node[anchor=center] (Label) at (1+\sepax, 3) {$X_{13}$};
\node[anchor=center] (Label) at (2+\sepax, 2) {$X_{14}$};

\end{tikzpicture}
\caption{Double-deck configuration in a cubical set $\bs X$. Note that $\ctr(X_2) = (0,0)$} 
\label{dbldeckprf}
\end{figure}
\end{proof}

\begin{corollary}\label{ddeckcor}
    If a clump has two double decks back-to-back (or any translation and rotation symmetry of it) as shown in Figure~\ref{dbldbldeckfig}, then it is 2-indentable.

\begin{figure}[ht] 
\centering
\begin{tikzpicture}[scale=1, line join=round, line cap=round]

\foreach \x in {-1, 0, 1, 2}
\foreach \y in {-0.5,-1.5}
    {
        \draw[very thick, fill=gray!40] (\x-0.5,\y) rectangle ++(1,1);
    }

\foreach \x in {0, 1}
\foreach \y in {0.5,-2.5}
    {
        \draw[very thick, fill=gray!40] (\x-0.5,\y) rectangle ++(1,1);
    }

\fill[black] (0,0) circle (1pt);
\node[anchor=north] (Label) at (0, 0.05) {$(0,0)$};

\end{tikzpicture}
\caption{Back-to-back double-deck configurations.} 
\label{dbldbldeckfig}
\end{figure}
    
\end{corollary}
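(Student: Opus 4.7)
The configuration in Figure~\ref{dbldbldeckfig} is the union of two double-decks: an \emph{upper} one that is a standard double-deck at the origin (as in Figure~\ref{deckfig}), and a \emph{lower} one obtained from the standard double-deck by a $180^\circ$ rotation followed by a translation, so that its 4-cube row has centers at $y=-1$ and its 2-cube row has centers at $y=-2$. My plan is to apply Lemma~\ref{ddeckcollpos} to each of these two double-decks separately in order to extract two distinct cubes in the collapse set.

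Applying Lemma~\ref{ddeckcollpos} directly to the upper double-deck, the collapse set of $\bs X$ contains some cube $X_a$ whose center $(x_a,y_a)$ satisfies $y_a\geq 1$. For the lower double-deck I would use that the notions of clump, collapsibility, and collapse set are invariant under the full dihedral symmetry group of the integer lattice. The map $(x,y)\mapsto(1-x,-1-y)$ sends the lower double-deck in $\bs X$ onto a standard double-deck at the origin in the transformed clump, and Lemma~\ref{ddeckcollpos} applied to this transformed clump yields a collapse cube whose second coordinate is $\geq 1$. Pulling back through the inverse of this map, $\bs X$ itself must contain a collapse cube $X_b$ with center $(x_b,y_b)$ satisfying $y_b\leq -2$.

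Since $y_a\geq 1>-2\geq y_b$, the cubes $X_a$ and $X_b$ are distinct, and therefore $\bs X$ is 2-collapsible. I do not foresee any substantial obstacle: the only point worth spelling out is that Lemma~\ref{ddeckcollpos} is equivariant under the isometric symmetries of $\mathbb Z^2$ used above, which is manifest from the rotation and translation invariance of all the defining conditions (regularity, contractibility, and cubical adjacency).
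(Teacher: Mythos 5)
Your proof is correct and follows essentially the same route as the paper: the paper's one-line argument is precisely to apply Lemma~\ref{ddeckcollpos} to each of the two double-decks (the second via the symmetry of the configuration), obtaining one collapse cube with positive and one with negative $y$-coordinate center, hence two distinct collapsible cubes. Your version merely makes the symmetry map $(x,y)\mapsto(1-x,-1-y)$ and the resulting bound $y_b\leq -2$ explicit, which is a harmless elaboration.
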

\begin{proof}
Follows by the fact that a clump that has back-to-back double-decks must necessarily have at least two cubes, one with positive and another with negative $y$-coordinate centers, in its indent set due to Lemma~\ref{ddeckcollpos}.    
\end{proof}

\begin{lemma}\label{lem ab}
    If a clump includes a 2-by-3 rectangle then it is \allowbreak 2-indentable or it has a Human. 
\end{lemma}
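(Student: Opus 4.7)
The approach is to apply the neighborhood criterion of Lemma~\ref{neighbcollem} to well-chosen cubes inside and around $\bs R_6$. Place $\bs R_6$ at the origin and write $X_{ij}$ for the cube with center $(i,j)$, so that $\bs R_6 = \{X_{ij} : i \in \{-1,0,1\},\ j \in \{0,1\}\}$. The natural first candidates are the two middle cubes $M_\top = X_{01}$ and $M_\bot = X_{00}$. In $\mc N_{\bs X}(M_\top)$ the only cells not forced to lie in $\bs R_6$ are $X_{-1,2}, X_{02}, X_{12}$, so collapsibility of $M_\top$ reduces to a short case analysis over the $2^3$ possibilities. It reveals that $\mc N_{\bs X}(M_\top) \setminus M_\top$ fails to be a clump precisely when $X_{02} \in \bs X$: if $X_{02}$ is present together with both $X_{-1,2}$ and $X_{12}$, then removing $M_\top$ creates a non-contractible annulus; and if $X_{02}$ is present but either side cell $X_{\pm 1,2}$ is absent, removing $M_\top$ produces an irregular diagonal corner at $(\pm\tfrac12,\tfrac32)$. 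All other configurations leave a topological disk, so $M_\top$ is collapsible iff $X_{02}\notin\bs X$. By the vertical reflection symmetry of $\bs R_6$, $M_\bot$ is collapsible iff $X_{0,-1}\notin\bs X$.

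If both $X_{02},X_{0,-1}\notin\bs X$, then $M_\top$ and $M_\bot$ jointly witness 2-collapsibility. Otherwise, using this same vertical symmetry, I assume $X_{02}\in\bs X$. Then $M_\top$ is not collapsible, and my task becomes to exhibit another collapsible cube (one to pair with $M_\bot$ if $X_{0,-1}\notin\bs X$, or two of them if $X_{0,-1}\in\bs X$) or else to locate a Human in $\bs X$.

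I then run analogous neighborhood analyses on the corner cubes $X_{-1,0}, X_{10}, X_{-1,1}, X_{11}$ of $\bs R_6$, on $X_{02}$ itself, and---in the sub-case $X_{0,-1}\in\bs X$---on $X_{0,-1}$ and its lower neighborhood. Each reduces, again via Lemma~\ref{neighbcollem}, to a small case analysis on the surrounding cells, and each obstruction to collapsibility is of only one of two types: an \emph{annular} obstruction, which forces the \emph{absence} of a specific nearby cell (whose presence would close a loop in the reduced neighborhood); or an \emph{irregular-diagonal} obstruction, which forces the \emph{presence} of a specific nearby cell (to prevent two cubes from meeting at a single corner after the collapse). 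Whenever the local configuration is not fully pinned down by these constraints, one of the candidate cubes turns out to be collapsible and, paired with another already-identified collapsible cube, yields 2-collapsibility.

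If no such freedom remains, the aggregated presences and absences forced by the obstructions match, up to the symmetry used above, the Human pattern: $\bs R_6$ is the torso, $X_{02}$ is the head, the cells below the rectangle forced by the sub-case $X_{0,-1}\in\bs X$ (notably $X_{0,-2}$) are the legs, and the irregular-diagonal obstructions pin down the empty cross-marked squares on either side of the head and legs. The main obstacle is the combinatorial breadth of this second stage; the subtle point is checking that every cross-marked absence in the Human is genuinely forced---that is, that permitting any one of those squares to be occupied would restore collapsibility of some cube and return us to the 2-collapsible case.
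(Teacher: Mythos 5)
Your opening move is the same as the paper's and is correct: testing the two middle cubes of $\bs R_6$ via Lemma~\ref{neighbcollem} and observing that each is collapsible precisely when the cell directly above (resp.\ below) it is vacant. The gap is in your second stage, specifically in the claim that every remaining obstruction is detectable by ``a small case analysis on the surrounding cells'' of a single neighborhood, falling into your annular/irregular-diagonal dichotomy. That is false, and it is exactly where the paper has to invoke a genuinely non-local argument. Three of the forced \emph{absences} in the Human (the cross between the legs and the two crosses beside the neck) are not forced by any annulus or diagonal corner inside one $3\times 3$ neighborhood: if, say, the cell above the top-left cube of $\bs R_6$ is occupied, no nearby neighborhood becomes an annulus. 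The paper rules these cells out via Lemma~\ref{ddeckcollpos} and Corollary~\ref{ddeckcor}: an occupied cell there completes a double-deck, and the double-deck lemma shows (by an unbounded-stacking argument that ultimately contradicts finiteness of the clump) that some cube on the far side of the deck must lie in the collapse set, which combined with the already-collapsible middle-bottom cube yields 2-collapsibility. This is a global forcing chain, not a local obstruction, and your framework has no mechanism to close it. This is the same phenomenon as the line clump of Remark~\ref{lineclumprem}: collapsibility can be deferred arbitrarily far, so purely neighborhood-local reasoning cannot decide it.

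The same omission breaks your case structure for the subcase where \emph{both} $X_{02}$ and $X_{0,-1}$ are occupied. You fold this into the Human outcome (``the cells below the rectangle forced by the sub-case $X_{0,-1}\in\bs X$, notably $X_{0,-2}$, are the legs''), but the Human of Figure~\ref{configaenum}-iv has the cell directly below the bottom-middle cube \emph{empty} (it is one of the crosses); its legs sit at the bottom corners $(\pm 1,-1)$, forced present because otherwise the bottom corner cubes of $\bs R_6$ would be collapsible. The correct disposition of the both-occupied subcase is that it produces back-to-back double-decks (rotated $90$ degrees), hence 2-collapsibility by Corollary~\ref{ddeckcor} --- again via the non-local stacking argument. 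So you need to import (or reprove) Lemma~\ref{ddeckcollpos} before your plan can be completed; as written, the proof does not go through.
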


\begin{proof}
Assume that a clump $\bs X$
has $\bs R_6$ and it is not 2-indentable. We will show that it does have a Human. As in Figure~\ref{configaenum}-i), let us name the cubes in $\bs R_6$ (bordered with \rev{thick} blue line) as $X_1,\ldots,X_6$ in clockwise order and the nearby cubes that are potentially included in the clump. 
\begin{figure}[ht] 
\centering
\begin{tikzpicture}[scale=1, line join=round, line cap=round]

\tikzmath{
\sepa = 7;
\sepay = 6.5;}

\foreach \x in {1, 2,3}
    \foreach \y in {0, 1}
    {
        \draw[very thick, fill=gray!40] (\x,\y) rectangle ++(1,1);
    }

\foreach \x in {0, 4}
\foreach \y in {0, 1}
    {
        \draw[very thick, fill=gray!10,dashed] (\x,\y) rectangle ++(1,1);
    }

\foreach \x in {1,2,3}
\foreach \y in {-1,2}
    {
        \draw[very thick, fill=gray!10,dashed] (\x,\y) rectangle ++(1,1);
    }

\draw[line width=3 pt,blue] (1,0) rectangle ++(3,2); 

\foreach \x in {1, 2,3}
    \foreach \y in {0, 1}
    {
        \draw[very thick, fill=gray!40] (\x+\sepa,\y) rectangle ++(1,1);
    }



\foreach \x in {0, 4}
\foreach \y in {0, 1}
    {
        \draw[very thick, fill=gray!10,dashed] (\x+\sepa,\y) rectangle ++(1,1);
    }

    \draw[very thick, fill=gray!40] (2+\sepa,2) rectangle ++(1,1);

\foreach \x in {1,3}
\foreach \y in {-1,2}
    {
        \draw[very thick, fill=gray!10,dashed] (\x+\sepa,\y) rectangle ++(1,1);
    }

\foreach \x in {0,\sepa}
 \draw[very thick, fill=gray!10,dashed] (2+\x,3) rectangle ++(1,1);

\draw[line width=3 pt,blue] (1+\sepa,0) rectangle ++(3,2);

\foreach \x in {0,\sepa}
{
\node[anchor=center] (Label) at (
        1.5+\x, 1.5) {$X_1$};
\node[anchor=center] (Label) at (
        2.5+\x, 1.5) {$X_2$};
\node[anchor=center] (Label) at (
        3.5+\x, 1.5) {$X_3$};
\node[anchor=center] (Label) at (
        3.5+\x, 0.5) {$X_4$};
\node[anchor=center] (Label) at (
        2.5+\x, 0.5) {$X_5$};
\node[anchor=center] (Label) at (
        1.5+\x, 0.5) {$X_6$};

\node[anchor=center] (Label) at (
        0.5+\x, 1.5) {$X_7$};
\node[anchor=center] (Label) at (
        1.5+\x, 2.5) {$X_8$};
\node[anchor=center] (Label) at (
        2.5+\x, 2.5) {$X_9$};
\node[anchor=center] (Label) at (
        3.5+\x, 2.5) {$X_{10}$};
\node[anchor=center] (Label) at (
        4.5+\x, 1.5) {$X_{11}$};
\node[anchor=center] (Label) at (
        4.5+\x, 0.5) {$X_{12}$};
\node[anchor=center] (Label) at (
        3.5+\x, -0.5) {$X_{13}$};
\node[anchor=center] (Label) at (1.5+\x, -0.5) {$X_{15}$};

\node[anchor=center] (Label) at (
        0.5+\x, 0.5) {$X_{16}$};
\node[anchor=center] (Label) at (2.5+\x, 3.5) {$X_{17}$};
}

\node[anchor=center] (Label) at (2.5, -0.5) {$X_{14}$};

\node[anchor=center] (Label) at (0, 3.5) {\textbf{i)}};

\node[anchor=center] (Label) at (0+\sepa, 3.5) {\textbf{ii)}};

\draw[very thick] (2+\sepa,-1) -- (3+\sepa,0); 
\draw[very thick] (2+\sepa,-0) -- (3+\sepa,-1);

\foreach \x in {1, 2,3}
    \foreach \y in {0, 1}
    {
        \draw[very thick, fill=gray!40] (\x,\y-\sepay) rectangle ++(1,1);
    }
    
\foreach \x in {0, 4}
    {
        \draw[very thick, fill=gray!10,dashed] (\x,0-\sepay) rectangle ++(1,1);
    }

\foreach \x in {0, 4}
    {
        \draw[very thick, fill=gray!40] (\x,1-\sepay) rectangle ++(1,1);
    }

    \draw[very thick, fill=gray!40] (2,2-\sepay) rectangle ++(1,1);

\foreach \x in {1,3}
\foreach \y in {-1,2}
    {
        \draw[very thick, fill=gray!10,dashed] (\x,\y-\sepay) rectangle ++(1,1);
    }

 \draw[very thick, fill=gray!40] (2,3-\sepay) rectangle ++(1,1);

\draw[line width=3 pt,blue] (1,-\sepay) rectangle ++(3,2);
 
\node[anchor=center] (Label) at (
        1.5, 1.5-\sepay) {$X_1$};
\node[anchor=center] (Label) at (
        2.5, 1.5-\sepay) {$X_2$};
\node[anchor=center] (Label) at (
        3.5, 1.5-\sepay) {$X_3$};
\node[anchor=center] (Label) at (
        3.5, 0.5-\sepay) {$X_4$};
\node[anchor=center] (Label) at (
        2.5, 0.5-\sepay) {$X_5$};
\node[anchor=center] (Label) at (
        1.5, 0.5-\sepay) {$X_6$};
\node[anchor=center] (Label) at (
        0.5, 1.5-\sepay) {$X_7$};
\node[anchor=center] (Label) at (
        1.5, 2.5-\sepay) {$X_8$};
\node[anchor=center] (Label) at (
        2.5, 2.5-\sepay) {$X_9$};
\node[anchor=center] (Label) at (
        3.5, 2.5-\sepay) {$X_{10}$};
\node[anchor=center] (Label) at (
        4.5, 1.5-\sepay) {$X_{11}$};
\node[anchor=center] (Label) at (
        4.5, 0.5-\sepay) {$X_{12}$};
\node[anchor=center] (Label) at (
        3.5, -0.5-\sepay) {$X_{13}$};
\node[anchor=center] (Label) at (1.5, -0.5-\sepay) {$X_{15}$};
\node[anchor=center] (Label) at (
        0.5, 0.5-\sepay) {$X_{16}$};
\node[anchor=center] (Label) at (2.5, 3.5-\sepay) {$X_{17}$};
\draw[very thick] (2,-1-\sepay) -- (3,0-\sepay); 
\draw[very thick] (2,-\sepay) -- (3,-1-\sepay);

\node[anchor=center] (Label) at (0, 3.5-\sepay) {\textbf{iii)}};

\foreach \x in {1, 2,3}
    \foreach \y in {0, 1}
    {
        \draw[very thick, fill=gray!40] (\x+\sepa,\y-\sepay) rectangle ++(1,1);
    }
    
\foreach \x in {0, 4}
    {
        \draw[very thick, fill=gray!10,dashed] (\x+\sepa,0-\sepay) rectangle ++(1,1);
    }

\foreach \x in {0, 4}
    {
        \draw[very thick, fill=gray!40] (\x+\sepa,1-\sepay) rectangle ++(1,1);
    }

    \draw[very thick, fill=gray!40] (2+\sepa,2-\sepay) rectangle ++(1,1);

\foreach \x in {1,3}
    {
        \draw[very thick, fill=gray!40] (\x+\sepa,-1-\sepay) rectangle ++(1,1);
    }

 \draw[very thick, fill=gray!40] (2+\sepa,3-\sepay) rectangle ++(1,1);

\draw[line width=3 pt,blue] (1+\sepa,-\sepay) rectangle ++(3,2);
 
\node[anchor=center] (Label) at (
        1.5+\sepa, 1.5-\sepay) {$X_1$};
\node[anchor=center] (Label) at (
        2.5+\sepa, 1.5-\sepay) {$X_2$};
\node[anchor=center] (Label) at (
        3.5+\sepa, 1.5-\sepay) {$X_3$};
\node[anchor=center] (Label) at (
        3.5+\sepa, 0.5-\sepay) {$X_4$};
\node[anchor=center] (Label) at (
        2.5+\sepa, 0.5-\sepay) {$X_5$};
\node[anchor=center] (Label) at (
        1.5+\sepa, 0.5-\sepay) {$X_6$};
\node[anchor=center] (Label) at (
        0.5+\sepa, 1.5-\sepay) {$X_7$};

\node[anchor=center] (Label) at (
        2.5+\sepa, 2.5-\sepay) {$X_9$};

\node[anchor=center] (Label) at (
        4.5+\sepa, 1.5-\sepay) {$X_{11}$};
\node[anchor=center] (Label) at (
        4.5+\sepa, 0.5-\sepay) {$X_{12}$};
\node[anchor=center] (Label) at (
        3.5+\sepa, -0.5-\sepay) {$X_{13}$};
\node[anchor=center] (Label) at (1.5+\sepa, -0.5-\sepay) {$X_{15}$};
\node[anchor=center] (Label) at (
        0.5+\sepa, 0.5-\sepay) {$X_{16}$};
\node[anchor=center] (Label) at (2.5+\sepa, 3.5-\sepay) {$X_{17}$};
\draw[very thick] (2+\sepa,-1-\sepay) -- (3+\sepa,0-\sepay); 
\draw[very thick] (2+\sepa,-\sepay) -- (3+\sepa,-1-\sepay);

\foreach \x in {-1,1}
{
\draw[very thick] (2+\sepa+\x,-1-\sepay+3) -- (3+\sepa+\x,0-\sepay+3); 
\draw[very thick] (2+\sepa+\x,-\sepay+3) -- (3+\sepa+\x,-1-\sepay+3);
}

\draw[very thick] (2+\sepa,-1-\sepay) -- (3+\sepa,0-\sepay); 
\draw[very thick] (2+\sepa,-\sepay) -- (3+\sepa,-1-\sepay);

\node[anchor=center] (Label) at (\sepa, 3.5-\sepay) {\textbf{iv)}};

\end{tikzpicture}
\caption{$\bs X$ with an $\bs R_6$ on i, and with the additional squares it needs to include/exclude so that it is not 2-indentable on ii, iii, and iv.}
\label{configaenum}
\end{figure}

We first argue that at least one of $X_9$ and $X_{14}$ needs to be in $\bs X$ because, otherwise, both $\mc N_{\bs X} (X_2)$ and $\mc N_{\bs X} (X_5)$ would have been indentable no matter whether $X_8$, $X_{10}$, $X_{13}$, and $X_{15}$ are in $\bs X$ or not. However, this would make $\bs X$ 2-indentable due to Lemma~\ref{neighbcollem} leading to a contradiction. We also see that at most one of $X_9$ or $X_{14}$ can be in $\bs X$ due to Corollary~\ref{ddeckcor}, since otherwise $\bs X$ would have a 90-degree rotation symmetry of back-to-back double-decks. Therefore, we conclude that exactly one of $X_9$ or $X_{14}$ must be in $\bs X$. Let us assume $X_9\in\bs X$ and $X_{14}\notin \bs X$ without loss of generality, due to symmetry (shown in Figure~\ref{configaenum}-ii). 

Note that now $\mc N_{\bs X} (X_5)$ is indentable, $\bs X$ cannot have another cube in its indent set. Since the cubes $X_1$, $X_3$, and $X_9$ would be in the indent set of $\bs X$ unless $X_7$, $X_{11}$, and $X_{17}$ are in $\bs X$, we conclude that $\{X_7, X_{11},X_{17}\}\subset \bs X$.
Figure~\ref{configaenum}-iii) shows the cubes we have argued to (not) be in $\bs X$ up until this point in our proof. 

Similarly, $X_{13}$ and $X_{15}$ must be in $\bs X$, because otherwise $\mc N_{\bs X} (X_4)$ and $\mc N_{\bs X} (X_6)$ would be indentable. That $X_8\notin \bs X$ and $X_{10}\notin \bs X$ follows from the observations that inclusion of either of them would create a double deck with the bottom row composed of $X_7,X_1,X_2, X_3$ or $X_1,X_2, X_3,X_{11}$, and therefore would make $\bs X$ 2-indentable due to Lemma~\ref{ddeckcollpos}. After all these steps, we obtain a Human shown in Figure~\ref{configaenum}-iv). 
 Proof goes in the same manner if we initially assume $\bs X$ includes $\bs{R}_6^\top$ rather than $\bs R_6$, but we get a rotation symmetry of Human. 
\end{proof}

We next give a basic observation that will be useful in the coming lemmas.
\begin{lemma} \label{lsubsetx}
    Given a loop $\bs L$ in a clump $\bs X$, its inland satisfies
    \[\inl(\bs L)\subseteq |\bs X|.\]
\end{lemma}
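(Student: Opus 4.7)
The plan is to argue by contradiction using the contractibility of $|\bs X|$ together with a winding number / Jordan curve argument. Suppose some $p \in \intr(\bs L) \setminus |\bs X|$ existed; then I want to show that the nerve of $\bs L$ is a continuous loop in $|\bs X|$ with nonzero winding number about $p$, which is incompatible with $|\bs X|$ being simply connected.

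First, I would verify that $\nerv(\bs L)$ is a simple closed polygonal curve (a Jordan curve) in $\mathbb{R}^2$. The vertices $\ctr(L_i)$ are pairwise distinct because the cubes $L_i$ are distinct, and each edge of the nerve is a horizontal or vertical unit segment on the half-integer grid, lying inside $|L_i \cup L_{i+1}|$. A short case check on how two such lattice segments can meet (parallel on the same grid line, or perpendicular) shows that non-consecutive edges share at most an endpoint, and any shared endpoint would force the equality of two cube centers, contradicting distinctness. Thus the nerve has no self-intersections, and by the Jordan curve theorem it bounds a well-defined region of $\mathbb{R}^2$, matching the definition of $\intr(\bs L)$.

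Next, I would view the nerve as a continuous loop $\gamma \colon S^1 \to |\bs X|$; this is legitimate since each segment $\overbar{\ctr(L_i)\ctr(L_{i+1})}$ lies in $|L_i \cup L_{i+1}| \subseteq |\bs L| \subseteq |\bs X|$. Assume for contradiction a point $p \in \intr(\bs L) \setminus |\bs X|$. Because $|\bs L| \subseteq |\bs X|$, the point $p$ must lie in the bounded component of $\mathbb{R}^2 \setminus \gamma$, so the winding number of $\gamma$ about $p$ equals $\pm 1$. On the other hand, $|\bs X|$ is contractible since $\bs X$ is a clump, so $\gamma$ is null-homotopic in $|\bs X| \subseteq \mathbb{R}^2 \setminus \{p\}$, forcing its winding number about $p$ to vanish. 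This contradiction yields the claim.

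The main obstacle is really the first step: checking that the lattice-valued polygonal nerve is indeed a simple closed curve, so that the Jordan curve theorem applies and the notion of interior is geometrically meaningful. Once that is settled, the winding number argument using contractibility is routine.
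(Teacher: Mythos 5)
Your proof is correct and follows essentially the same route as the paper, whose entire argument is the one-line observation that a missing interior point would give $|\bs X|$ nontrivial first homology. You simply make this precise: the winding-number computation and the check that the nerve is a Jordan curve are exactly the details the paper leaves implicit.
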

\begin{proof}
\rev{
Assume, on the contrary, there exists $y\in \mathbb{R}^2$ such that $y\in \inl(\bs L)$ and $y\notin |\bs X|$. As $|\bs L|\subseteq |\bs X|$, $y$ has to be in the interior of the scaffold of $\bs L$ which is a simple closed loop in $\mathbb{R}^2$. Take the union, $C$, of all connected subsets of $\mathbb{R}^2\setminus |\bs X|$ that contain $y$. $C$ is bounded as $y$ is in the interior of $\scf(\bs L)$ due to Jordan curve theorem. That $\mathbb{R}^2\setminus|\bs X|$ has a bounded connected component $C$ contradicts with the contractibility of the compact, locally contractible set $|\bs X|$ as a CW complex, due to Alexander duality.  
}
\end{proof}

\begin{lemma}\label{otherHumanlem}
    Assume a clump $\bs X$ has a Human, cubes of which are labeled as in Figure~\ref{configaenum}-iv. Define the cubical set $\bs Z$ as the set of all cubes of $\bs X$ reachable through a path in $\bs X$ from $X_{17}$ that does not go through $X_2$, i.e.,
    \begin{equation*}
   \begin{split}
       \bs Z\coloneqq \{Z\in \bs X, \text{ $\exists$  a path $\bs P= \{P_1, \ldots, P_n\}\subset \bs X$ \text{ for some $n\geq 1$}}, \\\text{ s.t. $P_1 = X_{17}, P_n = Z, X_2\notin \bs P$} \}.
    \end{split}
   \end{equation*}

   Then $\bs Z$ is a clump.
\end{lemma}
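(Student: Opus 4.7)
The plan is to establish that $\bs Z$ is precisely the ``head'' component of the Human, meeting the rest of $\bs X$ only along the top edge of $|X_2|$, and then to deduce that $\bs Z$ is a clump from planar topology. The first and main step is to show that $X_1 \notin \bs Z$ (and by symmetric arguments $X_3, X_5 \notin \bs Z$). Suppose otherwise: there is a path $\pi \subseteq \bs X$ from $X_{17}$ to $X_1$ avoiding $X_2$. Concatenating $\pi$ with the arc $X_1, X_2, X_9, X_{17}$ of $\bs X$ yields a loop $\bs L \subseteq \bs X$, and by Lemma~\ref{lsubsetx} its interior is contained in $|\bs X|$. The nerve of $\bs L$ is a closed polygonal curve in $\mathbb{R}^2$ passing through $\ctr(X_1), \ctr(X_2), \ctr(X_9), \ctr(X_{17})$; since $X_8, X_{10} \notin \bs X$, the nerve cannot visit $\ctr(X_8) = (1.5, 2.5)$ or $\ctr(X_{10}) = (3.5, 2.5)$. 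A Jordan-curve-type argument on the planar lattice then forces $\intr(\bs L)$ to contain at least one of these two lattice points. But such an enclosed center must lie in $|\bs X|$, requiring $X_8 \in \bs X$ or $X_{10} \in \bs X$, contradicting the Human configuration.

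With $X_1, X_3, X_5 \notin \bs Z$ established, the fact that all torso cubes of Human are mutually path-connected in $G - X_2$ (where $G$ is the cube-adjacency graph of $\bs X$) through chains such as $X_1, X_6, X_5, X_4, X_3$ implies that the entire torso component is contained in $\bs T := \bs X \setminus \bs Z$. Hence the only edge-adjacency between a cube of $\bs Z$ and a cube of $\bs T$ is the pair $\{X_9, X_2\}$. Moreover, any hypothetical purely corner-adjacency between $U \in \bs Z$ and $V \in \bs T$, i.e., $|U| \cap |V|$ a single point $p$ with the other two cubes incident to $p$ absent from $\bs X$, would make $p$ an irregular point of $|\bs X|$: $B_\varepsilon(p) \cap |\bs X|$ would consist of two opposite quadrants meeting only at $p$, whose deletion of $p$ is disconnected. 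Regularity of $\bs X$ rules this out. Consequently, $|\bs Z| \cap |\bs T| = e$, where $e := [2, 3] \times \{2\}$ is the top edge of $|X_2|$.

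To conclude, both regularity and contractibility of $\bs Z$ follow readily. For regularity at $x \in |\bs Z|$ not on $e$, a small ball around $x$ sees the same cubes in $\bs Z$ as in $\bs X$, so Remark~\ref{reglocrem} and regularity of $\bs X$ give the conclusion. For $x$ in the interior of $e$, $B_\varepsilon(x) \cap |\bs Z|$ is a half-disk lying in $|X_9|$, which remains contractible after removing $x$. At the endpoints $(2, 2)$ and $(3, 2)$, $B_\varepsilon(x) \cap |\bs Z|$ is a single quadrant of $|X_9|$, again contractible after removing $x$. For contractibility of $|\bs Z|$, the endpoints of $e$ lie on $\partial |\bs X|$ as corners of the absent cubes $X_8, X_{10}$, so $e$ is a chord of the planar disk $|\bs X|$; cutting $|\bs X|$ along $e$ yields two topological disks with intersection $e$, one of which is $|\bs Z|$, and so $|\bs Z|$ is contractible. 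Equivalently, Mayer--Vietoris applied to $|\bs X| = |\bs Z| \cup |\bs T|$ with contractible intersection forces the reduced homology of $|\bs Z|$ to vanish, which in the planar setting yields contractibility. The main obstacle is the enclosure argument in the first step, requiring careful use of the lattice structure together with the three forbidden vertices $\ctr(X_2), \ctr(X_8), \ctr(X_{10})$.
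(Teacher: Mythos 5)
Your proposal follows essentially the same route as the paper's proof: you rule out any edge-contact between the head component $\bs Z$ and the torso by producing a loop that would have to enclose one of the forbidden cells $X_8$ or $X_{10}$ (contradicting Lemma~\ref{lsubsetx}), rule out corner-contacts via regularity of $\bs X$, and then deduce regularity and contractibility of $\bs Z$. The only difference is one of detail: you spell out the final step (local analysis along the shared edge $e$ plus a Mayer--Vietoris/disk-cutting argument for contractibility) more explicitly than the paper, which simply asserts that contractibility of $|\bs Z|$ follows from that of $|\bs X|$.
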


\begin{proof}
Note that there cannot be a cube in $\bs Z$ that share a 1-dimensional face (edge) with any of the cubes of the Human, because this would mean that there is a loop in $\bs X$ starting at $X_9$ and ending at $X_2$, and the inland of this loop would include at least one of the empty locations (denoted with crosses in Figure~\ref{configaenum}-iv), which would contradict with Lemma~\ref{lsubsetx}.
Furthermore, \rev{cubes of $\bs Z$ cannot share a 0-dimensional face (point) with the cubes of the human} since this would make the intersection an irregular point (see Figure~\ref{irregpiclabel} left hand side), contradicting that $\bs X$ is a clump. Therefore $|\bs Z|\cap |X_i|= \varnothing$ for all $1\leq i\leq 16$, and $\mc N_{\bs Z}(Z)=N_{\bs X}(Z)$ for all $Z\in\bs Z$, which together with Lemma~\ref{regiffneighlem} and that $\bs X$ is a clump proves that $\bs Z$ is regular. 

We are next going to prove that $|\bs Z|$ is contractible. Assume that it is not contractible. Then, since $|Z|$ is a planar cubical complex it means either $|\bs Z|$ is not path-connected or it has a non-trivial 1-dimensional homology. $|\bs Z|$ has to be path-connected by definition, so has to have a a non-trivial 1-dimensional homology. Given this, take a simple closed loop in $|\bs Z|$ that has a hole, which can be turned into a loop $\bs L$ of cubes (Definition~\ref{pathdef}) in $\bs Z$ since $\bs Z$ does not have irregular points. Due to Lemma~\ref{lsubsetx} inland of this loop has to have a cube, call it $C$, in $\bs X$ but not in $\bs Z$. Note that $C$ is connected by a path in $\bs X$ not going through $X_2$ to a cube $L\in\bs L$. Being in $\bs Z$, by definition, $L$ is connected by a path to $X_{17}$ not going through $X_2$. Concatenating these two paths (and removing any loops if occur because of the concatenation) gives a path from $X_{17}$ to $C$ not going through $X_2$, and therefore $C\in \bs Z$, which is a contradiction.   
This finishes the proof. 
\end{proof}

\begin{corollary}\label{trum2colcor}
    If a clump, $\bs X$, has a Human then it is 2-indentable.
\end{corollary}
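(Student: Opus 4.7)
The plan is to identify two distinct collapsible cubes of $\bs X$ and invoke Lemma~\ref{neighbcollem} for each. The first cube $X^{\ast}$ is the one witnessing the Human's own $1$-collapsibility, namely the cube with center $(0,-2)$ in the Human's local frame. Because the Human configuration prescribes the full $3\times 3$ window around $X^{\ast}$ via both the filled silhouette cells and the cross-marked empty cells, one has $\mc N_{\bs X}(X^{\ast}) = \mc N_{\text{Human}}(X^{\ast})$, which is a collapsible clump through $X^{\ast}$. Lemma~\ref{neighbcollem} then gives that $\bs X$ is collapsible through $X^{\ast}$.

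For the second cube, I invoke Lemma~\ref{otherHumanlem} to produce the sub-clump $\bs Z$. The Human's empty cells $X_8$ and $X_{10}$ restrict $X_9$'s edge-neighbors in $\bs X$ to $\{X_{17}, X_2\}$, and the Human's empty cells surrounding $X_{17}$ (as specified in Figure~\ref{Humanfig}) restrict $X_{17}$'s edge-neighbors in $\bs X$ to $\{X_9\}$. Hence $\bs Z = \{X_{17}, X_9\}$ and $\mc N_{\bs X}(X_{17}) = \{X_{17}, X_9\}$, which is a collapsible clump through $X_{17}$ (the collapse leaves the single-cube clump $\{X_9\}$). A second application of Lemma~\ref{neighbcollem} yields that $\bs X$ is collapsible through $X_{17}$. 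Since $X^{\ast}$ lies in the body of the silhouette while $X_{17}$ is the head, these two cubes are distinct, and $\{X^{\ast}, X_{17}\}$ forms a $2$-element collapse set, establishing $2$-collapsibility.

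The main obstacle will be to verify that $\mc N_{\bs X}(X_{17}) = \{X_{17}, X_9\}$, i.e., that no cube of $\bs X$ outside the Human touches $X_{17}$. The loop-and-empty-cell argument of Lemma~\ref{otherHumanlem} already rules out any cube of $\bs Z$ touching the Human body, since such a cube would close a loop in $\bs X$ from $X_9$ to $X_2$ whose interior would have to contain one of the empty Human cells, contradicting Lemma~\ref{lsubsetx}. Combined with the Human's prescribed empty cells around the head, this pins $\bs Z$ down to exactly the neck-and-head pair, from which the neighborhood identity and both collapses follow routinely.
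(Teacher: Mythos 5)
Your first collapse cube is fine: the Human does prescribe the entire $3\times 3$ window around the cube $X^{\ast}$ centered at $(0,-2)$ (in the labeling of Figure~\ref{configaenum}-iv this is $X_5$, whose window consists of $X_1,\dots,X_6,X_{13},X_{15}$ filled and $X_{14}$ empty), so $\mc N_{\bs X}(X^{\ast})$ is forced, is collapsible through $X^{\ast}$, and Lemma~\ref{neighbcollem} applies. The gap is in your second cube. You assert that the Human's prescribed empty cells ``surrounding $X_{17}$'' force $\mc N_{\bs X}(X_{17})=\{X_{17},X_9\}$, hence $\bs Z=\{X_{17},X_9\}$. But the Human configuration only requires the three cells $X_8$, $X_{10}$, $X_{14}$ to be empty; none of these is edge-adjacent to the head $X_{17}$, and the cells above and beside $X_{17}$ are unconstrained. (This must be so: the proof of Lemma~\ref{lem ab} concludes that the configuration of Figure~\ref{configaenum}-iv \emph{is} a Human after establishing emptiness of only those three cells, and Lemma~\ref{otherHumanlem} would be vacuous if $\bs Z$ were always the two-cube set you describe.) Concretely, attach a vertical column of cubes on top of $X_{17}$: the result is still a clump containing a Human, but now $X_{17}$ is a cut vertex, $\mc N_{\bs X}(X_{17})\setminus X_{17}$ is disconnected, and $X_{17}$ is not in the collapse set. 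So your second collapse cube simply does not exist in general.

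The paper's proof handles exactly this situation: it takes the (possibly large) sub-clump $\bs Z$ hanging off the head, guaranteed to be a clump by Lemma~\ref{otherHumanlem} and disjoint from the rest of $\bs X$ except along the neck, and applies the dichotomy of Lemmas~\ref{lem ab} and~\ref{norectlem} to $\bs Z$ itself: either $\bs Z$ supplies the needed collapse cubes, or $\bs Z$ contains a second Human disjoint from the first, and each Human contributes one collapse cube of the $X^{\ast}$ type. To repair your argument you would need to replace the claim about $X_{17}$ with some such recursion (or another argument locating a second collapsible cube inside $\bs Z$); as written, the second half of the proof fails.
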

\begin{proof}
  Using Lemma~\ref{otherHumanlem}, we argue that $\bs X$ has a clump that does not intersect the Human. On this clump we use Lemma~\ref{lem ab} and Lemma~\ref{norectlem} to deduce that it is 2-indentable or has a Human of itself. This implies that $\bs X$ is either 2-indentable or it has two disjoint Humans. Noting that a Human is always 1-indentable
  concludes the proof.    
\end{proof}

Our next lemma will take care of the ``branchy'' clumps that do not have a 2-by-3 rectangle.  
\begin{lemma}\label{norectlem}
    If a clump does not include a 2-by-3 rectangle then it is 2-indentable.
\end{lemma}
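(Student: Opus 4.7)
The plan is to pick a longest path $P = (X_1, \dots, X_n)$ in the skeleton of $\bs X$ and show that both endpoints are $1$-collapsible. By Lemma~\ref{neighbcollem} this reduces to showing $\mc N_{\bs X}(X_1)$ is a collapsible clump through $X_1$; the same argument applied to $X_n$ then yields $2$-collapsibility, provided $X_1 \neq X_n$. The skeleton is connected because two cubes meeting only at a corner violate regularity and two disjoint cubes violate contractibility, so any clump $\bs X$ with $\card(\bs X)\geq 2$ admits a path of length $\geq 2$ in its skeleton.

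The first step is that every edge-neighbor of $X_1$ in $\bs X$ must lie on $P$; otherwise prepending it produces a longer path. Thus each additional edge $X_1X_j$ (with $j\geq 3$) is a chord closing a simple cycle $X_1X_2\cdots X_jX_1$ of length $j$ in the skeleton. The central structural claim I would prove is that the skeleton of a clump containing no $2$-by-$3$ rectangle admits no simple cycle of length $\geq 6$: the skeleton is bipartite (color by the parity of the sum of cube-center coordinates), excluding odd cycles; three unit cubes on the grid cannot be pairwise edge-adjacent, excluding length-$3$ cycles; any simple $6$-cycle on the integer grid is the perimeter of a $1\times 2$ rectangle and so its six cubes form exactly $\bs R_6$, contradicting the hypothesis; and for length $2k\geq 8$, Pick's theorem forces enclosed area $\geq k-1\geq 3$, and Lemma~\ref{lsubsetx} places every cube centred in this enclosed region into $\bs X$, after which a case-by-case check of the small lattice polygons shows that the cycle cubes together with these interior cubes contain two consecutive rows of length $\geq 3$, i.e., a $\bs R_6$. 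Consequently the only allowed chord from $X_1$ is $X_1X_4$, making $\{X_1, X_2, X_3, X_4\}$ a $2\times 2$ block; and $X_1$ cannot have any further edge-neighbor, since that would create another cycle of length $\geq 6$ (e.g., a second chord $X_1X_k$ with $k>4$ closes a cycle of length $k\geq 6$). Hence $X_1$ has at most two edge-neighbors in $\bs X$, namely $X_2$ and optionally $X_4$.

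A direct analysis of $\mc N_{\bs X}(X_1)$ then finishes the proof. In the one-edge-neighbor case, any corner-neighbor of $X_1$ in $\bs X$ requires an edge-filler from $\bs X$ for regularity at the shared corner, and the only filler available is $X_2$; hence $\mc N_{\bs X}(X_1)\setminus X_1$ lies inside the $1\times 3$ strip through $X_2$ and is a clump in every subcase. In the two-edge-neighbor case, every corner-neighbor of $X_1$ in $\bs X$ other than $X_3$ must be edge-filled by $X_2$ or $X_4$, so $\mc N_{\bs X}(X_1)\setminus X_1$ is the L-shape $\{X_2, X_3, X_4\}$ possibly extended by one or two tail cubes attached along its arms; each such configuration is regular, connected, and simply connected, hence a clump. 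Lemma~\ref{neighbcollem} then gives collapsibility of $\bs X$ through $X_1$ and, by symmetry, through $X_n$.

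The main obstacle is the structural claim that no cycle of length $\geq 6$ occurs in the skeleton of a $2$-by-$3$-free clump. The $6$-cycle case is immediate from the classification of simple $6$-cycles on the integer grid, but ruling out longer cycles requires combining Pick's theorem with Lemma~\ref{lsubsetx} and a careful enumeration of the possible small-area lattice polygons that could bound such a cycle. Once this structural fact is in hand, the endpoint case analysis reduces to a bounded enumeration and follows by inspection.
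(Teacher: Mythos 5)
Your proposal is correct and follows essentially the same route as the paper: reduce the hypothesis to the absence of loops of length at least $6$ in the skeleton (via the enclosed area of the nerve polygon and Lemma~\ref{lsubsetx}), then take a longest path and show both of its endpoints have collapsible neighborhoods, invoking Lemma~\ref{neighbcollem}. Your endpoint analysis is organized by the number of edge-neighbors of the endpoint and the corner-regularity constraint rather than by the paper's figure-based case check, but this is only a cosmetic reorganization of the same argument.
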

\begin{proof}
  We will first show that if a clump $\bs X$ does not have a 2-by-3 rectangle then then it does not have a loop of length at least 6. For this, assume $\bs X$ has a loop of length 6 or longer. 
  We take one such loop of $\bs X$ and consider its scaffold. Note that this is a self avoiding polygon enclosing an area of at least 2 and therefore its interior includes the skeleton \rev{(Definition~\ref{defSkel})} of an $\bs R_6$, which is composed of two squares sharing an edge. Using Lemma~\ref{lsubsetx}, we conclude $\bs X$ has a 2-by-3 rectangle.

Now assume $\bs X$ does not have a loop of length 6 longer. We take the longest path (choose an arbitrary one if there are multiple) in $\bs X$ that we name $\bs P = \{P_1, \ldots, P_n\}$. Without loss of generality, due to symmetry, assume that the community of $P_n$ looks as in Figure~\ref{pathlongestfig} below. 

 \begin{figure}[ht]
\centering
\begin{tikzpicture}[scale=1, line join=round, line cap=round]

\foreach \x in {0,2}
\foreach \y in {0,1,2}
{
        \draw[very thick, fill=gray!10,dashed] (\x,\y) rectangle ++(1,1);
}

\draw[very thick, fill=gray!10,dashed] (1,2) rectangle ++(1,1);

\foreach \y in {0,1} 
{\draw[very thick, fill=gray!40] (1,\y) rectangle ++(1,1);}

\node[anchor=center] (Label) at (
        1.5, 1.5) {$P_n$};
\node[anchor=center] (Label) at (
        1.5, 0.5) {$P_{n-1}$};

\node[anchor=center] (Label) at (
        0.5, 2.5) {$X_1$};

\node[anchor=center] (Label) at (
        1.5, 2.5) {$X_2$};

\node[anchor=center] (Label) at (
        2.5, 2.5) {$X_3$};

\node[anchor=center] (Label) at (
        2.5, 1.5) {$X_4$};
\node[anchor=center] (Label) at (
        2.5, 0.5) {$X_5$};
\node[anchor=center] (Label) at (
        0.5, 0.5) {$X_6$};
\node[anchor=center] (Label) at (
        0.5, 1.5) {$X_7$};

\end{tikzpicture}
\caption{Community of the last cube in the longest path in $\bs X$.}
\label{pathlongestfig}
\end{figure}

\rev{First, note that for each $i\in\{2,4,7\}$ either $X_i\in\bs X$, $X_i\in \bs P$ or $X_i\notin\bs X$, $X_i\notin \bs P$ because in the case $X_i\in\bs X$, $X_i\notin \bs P$ the path $\{P_1, \ldots, P_n, X_i\}$ would be a path longer than $\bs P$. Moreover, $X_2$ cannot be on $\bs P$, since otherwise if there exists $1\leq j\leq n-2$ such that $X_2=P_j$ then $\{P_j,\ldots,P_{n}\}$ would constitute a loop of length six or greater. Therefore we conclude that  $X_2\notin\bs P$, $X_2\not\in \bs X$.  Assume now that $X_7\in \bs P$, i.e., there exists $1\leq j\leq n-2$ such that $X_7=P_j$. If $X_6\notin \bs P$ then $ \{P_j, \ldots, P_{n-1}, P_n\}$ would be a loop of length at least 6. We conclude that if $X_7\in \bs X$ then $X_6\in \bs X$. Similarly, if $X_4$ is in $\bs X$ then $X_5$ is too. It is straightforward to check that $\mathcal{N}_{\bs X} (P_n)$ is indentable in all possible cases, i.e., $X_6,X_7\in\bs X$; $X_7\notin\bs X$; $X_5,X_4\in\bs X$; $X_4\notin\bs X$. Repeating the argument for $P_1$ instead of $P_n$ results in that $\mathcal{N}_{\bs X} (P_1)$ is also indentable. Using Lemma~\ref{neighbcollem}, we obtain that $P_1$ and $P_n$ are in the indent set of $\bs X$. This concludes the proof.} 
\end{proof}

\begin{proof}[Proof of Theorem~\ref{indentthm}]
  Follows from Lemma~\ref{norectlem}, Lemma~\ref{lem ab}, and \allowbreak Corollary~\ref{trum2colcor}.
\end{proof}

Using  Theorem~\ref{indentthm}, we will be able to prove the ergodicity of our dynamic cubical set model, to be discussed in the next section, for the special case of $d=2$.

\section{Ising Ensemble on Clumps}
\label{Isingsec}
In this section, we construct a Markov process that samples random clumps in any dimension and that models a randomly evolving Markovian cubical set. The state space of the process is the set of cubical sets in $\Rd$, i.e., finite subsets of $\mc{K}^d$. 
The process starts with the assignment $\bs X(t) =\left\{ [-\frac{1}{2},\frac{1}{2}]^d\right\}$ for $t=0$, and an independent unit-rate Poisson counting process with an exponential clock $C_K:\rpl\to \{0,1,2,\ldots\}$ associated with each $K\in \mc{K}^d$. Define the set   
\begin{align*}
        \mc M (\bs X(t))\coloneqq \left\{\bigcup_{X\in \bs X(t)} \mc N_{\bs X(t)} (X)\right\} \setminus \left[-\frac{1}{2},\frac{1}{2}\right]^d,
    \end{align*}
and the $i\ts{th}$ transition time, $i\in\{1,2,\ldots\}$,
\begin{align*}
    \tau_i \coloneqq \inf\{t>\tau_{i-1}: \text{$C_X(t)> C_X(\tau_{i-1})$ for some $X\in \mc M (\bs X(\tau_{i-1}))$ } \}
\end{align*}
with $\tau_0=0$, and 
\begin{align*}
    Y_i = \{\text{$X\in \mc M (\bs X(\tau_{i-1})):$ $C_X(\tau_i)> C_X(\tau_{i-1})$ } \}.
\end{align*}

In words, $\tau_i$ denotes the first tick in the independent exponential clocks of the set of all cubes that are in $\bs X(\tau_{i-1})$ and their neighbors, and $Y_i$ denotes the cube whose clock made the first tick.

For the interval between ticks, i.e. for all $\tau_{i-1}\leq t<\tau_i$, we assume $\bs X(t) = \bs X(\tau_{i-1})$.
The transition at time $\tau_i$ is decided as follows. If $\mathcal{N}_{\bs X(\tau_{i-1})}(Y_i)$ is not an indentable clump then $\bs X(\tau_i)=\bs X (\tau_{i-1})$. Otherwise, if $Y_i\in \bs X(\tau_{i-1})$ then
    \begin{align*}
        \bs X(\tau_i) = \begin{cases}
            \bs X(\tau_{i-1})\setminus Y_i &\textit{w.p.\quad $p_\downarrow$},\\
            \bs X(\tau_{i-1}) &\textit{w.p.\quad $1-p_\downarrow$,}
        \end{cases}
    \end{align*}
    and if $Y_i\not\in \bs X(\tau_{i-1})$ then
    \begin{align*}
        \bs X(\tau_i) = \begin{cases}
            \bs X(\tau_{i-1})\cup Y_i &\textit{w.p.\quad $p_\uparrow$},\\
            \bs X(\tau_{i-1}) &\textit{w.p.\quad $1-p_\uparrow$.}
        \end{cases}
    \end{align*}
    
     Here $p_\uparrow$ and $p_\downarrow$ refer to \emph{expand} and \emph{indent probabilities} respectively, for some $0< p_\downarrow,p_\uparrow\leq 1$. The ratio, $p_\uparrow/p_\downarrow$, of these probabilities, which determines the tendency of the clump to grow, will play an important role in our analysis of the CTMC in the planar special case. An analogous parameter is called the (area) \emph{fugacity} in the study of self-avoiding polygons through generating functions \cite{guttbook}. The relationship will be made more clear in Remark~\ref{fugabetarmk}.

\begin{remark}
    Due to Lemma~\ref{neighbcollem} and Corollary~\ref{neighbexplem}, under the rules of Markov chain given above, $\bs X(t)$ is a clump for all $t\geq 0$.
\end{remark}

\subsection{Markov model for \texorpdfstring{$d=2$}{}}
Under this subsection, we will study the CTMC defined above for general $d\geq 2$, in the planar special case $d=2$. The planar assumption  will be assumed to hold throughout even when it is not explicitly mentioned. 

We will first relate the ergodic behavior of the Markov process to the combinatorial properties of \emph{self-avoiding polygons}, defined below.
\begin{definition}
A \emph{self-avoiding circuit} that starts at the origin is a sequence of points $Z_1,Z_2,\cdots,Z_n$ in $\mathbb{Z}^2$ such that $Z_1=Z_n=0$, and for all $1\leq i,j\leq n-1$, $|Z_i-Z_{i+1}|_2=1$ and $Z_i=Z_j$ only if $i=j$. A \emph{self-avoiding polygon} is the interior (in $\bb R^2$) of a self avoiding circuit.  
\end{definition}

Next, we give some remarks below that will enable us to study the statistical properties of our Markov model.
\begin{remark}\label{bijsaprmk}
    \rev{Note that due to Theorem~\ref{indentthm}, the ``reachable state space'' (\emph{support}) of the CTMC in the special case of $d=2$ is the set of all clumps whose geometric realization includes the origin. Furthermore, for all $n\geq 1$, 
    there is a bijection between the set of all clumps of size $n$ which include the origin and the set of all self-avoiding polygons of area $n$. This follows from observing that the boundary of the geometric realization of every clump covering the origin defines a self-avoiding polygon (after translation) and vice versa. Moreover, there is an $1$-to-$n$ surjective mapping between the set of self-avoiding polygons and the set of   
     \emph{unrooted} self-avoiding polygons, i.e., the set of self-avoiding polygons up to translation, an object of significant interest in the statistical physics literature \cite{guttbook}. Therefore, if $a_n$ is defined to be the number of unrooted self-avoiding polygons of area $n$, the cardinality of the set of clumps of size $n$ which include the origin is upper bounded by $na_n$.} 
\end{remark}


\begin{remark}\label{saprmk}
\rev{Enumerating the unrooted self-avoiding polygons is an active research area. The classical concatenation arguments (i.e. that every self-avoiding polygon can be decomposed into two separate ones, but not vice versa, see Chapter-1 of \cite{guttbook} for details) gives the inequality $a_{n+m}\leq a_na_m$, which in turn implies the sub-additivity of $\log a_n$. Using Fekete's lemma,} 
\begin{equation}\label{anlimeqn}
   \lim_{n\to\infty} (a_n)^{1/n} = \kappa 
\end{equation} 
for some $\kappa\in (0,\infty)$. This implies for any constant $c_1>1$ there exists a constant $C>0$ such that
\begin{align}\label{anbound}
   a_n\leq C \cdot(c_1\kappa)^n. 
\end{align} 
\end{remark}

Now we are ready to state the main theorem of this section. 
\begin{theorem}\label{stationarythm}
    The continuous-time Markov chain $\bs X(t)$, $t\in\rpl$, described previously is irreducible. 
     Furthermore, if $\beta\coloneqq \frac{p_\uparrow}{p_\downarrow}<\frac{1}{\kappa}$, \rev{$\kappa$ defined in Remark~\ref{saprmk}}, then $\bs X(t)$ is also ergodic and has the stationary distribution $\Pi$, given as (up to normalization) 
 \begin{align}
     \Pi (\bs Y) \propto \begin{cases}
        \beta^{\card(\bs Y)} \quad &\text{if $\bs Y$ is a clump and $[-1/2,1/2]^2\in \bs 
        Y$,}\\
        0 &\text{otherwise.}
    \end{cases}\label{pidefeq}
    \end{align}
\end{theorem}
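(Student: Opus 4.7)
For irreducibility, the plan is to exploit Theorem~\ref{collapsethm}: since $2$-collapsibility implies $1$-collapsibility, every clump of size $\geq 2$ has a valid collapse, so by induction on $\card(\bs Y)$ any clump reduces to the singleton $\bs X_0 = \{[-\tfrac12,\tfrac12]^2\}$ via a chain of transitions, each occurring at rate $p_\downarrow$ by Lemma~\ref{neighbcollem}. By Lemma~\ref{neighbexplem}, the reverse expansions all have positive rate $p_\uparrow$, so every clump is reachable from $\bs X_0$. Hence any two states communicate through $\bs X_0$, giving irreducibility.

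For the stationary distribution, the plan is to verify detailed balance and then check that $\Pi$ is normalizable. Take two clumps $\bs Y, \bs Y'$ containing $\bs X_0$ and differing by a single cube $Y \neq \bs X_0$, say $\bs Y' = \bs Y \cup \{Y\}$. By Lemma~\ref{neighbexplem} the rate $\bs Y \to \bs Y'$ is exactly $p_\uparrow$ (the clock of $Y$ ticks at rate $1$, the neighborhood condition for expansion is met, and we accept with probability $p_\uparrow$); by Lemma~\ref{neighbcollem} the reverse rate is $p_\downarrow$. Since $\Pi(\bs Y') = \beta\,\Pi(\bs Y)$ with $\beta = p_\uparrow/p_\downarrow$, the detailed balance relation $\Pi(\bs Y) p_\uparrow = \Pi(\bs Y') p_\downarrow$ reduces to the tautology $p_\uparrow = p_\uparrow$. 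For normalizability: by Remark~\ref{bijsaprmk}, clumps of size $n$ biject with self-avoiding polygons of area $n$ (up to translation), and pinning one of the $n$ cubes of such a polygon to coincide with $\bs X_0$ yields $n$ valid clumps per polygon; hence the number of clumps of size $n$ in the support of $\Pi$ equals $n\,a_n$. By Remark~\ref{saprmk}, for any $\kappa' \in (\kappa, 1/\beta)$ there is $C>0$ with $a_n \leq C(\kappa')^n$, so the total mass $\sum_n n\,a_n\,\beta^n \leq C \sum_n n(\kappa'\beta)^n$ converges.

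With irreducibility and an invariant probability distribution on a countable state space in hand, standard CTMC theory yields positive recurrence, uniqueness of $\Pi$, and ergodicity. Non-explosiveness—the one technical condition to check—follows since a clump of size $n$ has at most $9n$ cubes in $\mc M(\bs X(t))$ (each planar cube has at most $8$ neighbors in the lattice), so the outgoing rate grows only linearly in $n$, which combined with the summable $\Pi$ precludes explosion. The main obstacle is not any single hard estimate but rather the orchestration: one must carefully parse the CTMC rules to extract transition rates, lean on Lemmas~\ref{neighbcollem} and~\ref{neighbexplem} to reinterpret the neighborhood-based acceptance rule as collapsibility/expandability of the clump itself, and finally invoke the sub-exponential growth bound for self-avoiding polygons from Remark~\ref{saprmk} to secure normalizability exactly in the regime $\beta < 1/\kappa$.
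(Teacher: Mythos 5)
Your overall architecture matches the paper's: irreducibility via Theorem~\ref{collapsethm}, detailed balance for $\Pi$, normalizability via the self-avoiding-polygon count $n\,a_n \le Cn(\kappa')^n$, and non-explosiveness via $\card(\mc M(\bs Y))\le 9\card(\bs Y)$ together with the finiteness of $\ex_\Pi[\card(\bs Y)]$. Those three latter pieces are essentially identical to the paper's argument (the normalizability and moment bound are the paper's Lemma~\ref{pigoodlem}).

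There is, however, a genuine gap in your irreducibility step. You write that ``$2$-collapsibility implies $1$-collapsibility, every clump of size $\geq 2$ has a valid collapse, so by induction any clump reduces to the singleton $\bs X_0$ via a chain of transitions, each occurring at rate $p_\downarrow$.'' This discards exactly the strength of Theorem~\ref{collapsethm} that the argument needs. By construction, $\mc M(\bs X(t))$ excludes the cube $[-\tfrac12,\tfrac12]^2$: the clock attached to the origin cube is never consulted, so the chain can \emph{never} execute a collapse through the origin cube, and that transition has rate $0$, not $p_\downarrow$. If a clump were merely $1$-collapsible and its unique collapsible cube happened to be the origin cube, your induction would stall (or would be forced to pass through a state not containing $[-\tfrac12,\tfrac12]^2$, which is unreachable). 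The correct argument uses $2$-collapsibility precisely here: the collapse set always contains two distinct cubes, hence at least one cube other than the origin cube, so at every step there is a removable cube whose clock the chain actually sees, and the reduction terminates at $\bs X_0$ while keeping the origin cube throughout. The paper makes this point explicitly. The fix is one sentence, but as written your induction does not go through.
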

Before the proof of the theorem above, we give an auxiliary lemma as follows.
\begin{lemma}\label{pigoodlem}
    If $\beta<\frac{1}{\kappa}$, $\Pi(\bs Y)$ given in \eqref{pidefeq} is a well-defined distribution on cubical sets. Furthermore, all finite moments of $\card(\bs Y)$ under $\bs Y\sim\Pi$ are finite.  
\end{lemma}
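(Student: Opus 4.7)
The plan is to reduce the question to the known exponential growth rate of self-avoiding polygons and then sum an exponentially dominated series. First, I would use Remark~\ref{bijsaprmk} to count clumps containing the anchor cube $[-1/2,1/2]^2$. Since $a_n$ counts self-avoiding polygons of area $n$ up to translation, and a clump is a specific subset of $\mathcal{K}^d$, each translation equivalence class of size $n$ gives rise to exactly $n$ distinct clumps containing $[-1/2,1/2]^2$, one for each choice of which of the $n$ cubes is translated to the origin. Hence the normalizing constant is
\begin{equation*}
Z \;=\; \sum_{\bs Y}\beta^{\card(\bs Y)}\ind\{\bs Y \text{ clump},\; [-1/2,1/2]^2\in\bs Y\} \;=\; \sum_{n=1}^{\infty} n\, a_n\, \beta^n.
\end{equation*}

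Next, since $\beta<1/\kappa$, I would fix any $\kappa'$ with $\kappa<\kappa'<1/\beta$. By Remark~\ref{saprmk}, there exists a constant $C>0$ such that $a_n\le C(\kappa')^n$ for all $n\ge 1$. Therefore
\begin{equation*}
Z \;\le\; C\sum_{n=1}^{\infty} n\,(\kappa'\beta)^n \;<\;\infty,
\end{equation*}
since $\kappa'\beta<1$ forces the power series $\sum_n n r^n$ with $r=\kappa'\beta$ to converge. This proves $\Pi$ is a well-defined probability measure after normalization by $Z$.

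For the moments, fix any integer $k\ge 0$. Under $\bs Y\sim\Pi$, grouping clumps by their size as above gives
\begin{equation*}
\ex\bigl[\card(\bs Y)^k\bigr] \;=\; \frac{1}{Z}\sum_{n=1}^{\infty} n^k\,(n\,a_n)\,\beta^n \;\le\; \frac{C}{Z}\sum_{n=1}^{\infty} n^{k+1}(\kappa'\beta)^n,
\end{equation*}
and the right-hand side is finite because $\sum_n n^{k+1} r^n<\infty$ for any $r\in(0,1)$. Since $k$ was arbitrary, every finite moment of $\card(\bs Y)$ is finite.

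There is no real obstacle here; the statement is essentially a direct consequence of the exponential upper bound $a_n\le C(\kappa')^n$ combined with the condition $\beta<1/\kappa$, which buys us enough room to absorb the polynomial factor $n^{k+1}$ arising from the bijective counting and the moment. The only mild subtlety is the combinatorial bookkeeping: the factor of $n$ in $n\cdot a_n$ must be correctly identified when passing from translation classes of self-avoiding polygons to the specific clumps containing the anchor cube $[-1/2,1/2]^2$.
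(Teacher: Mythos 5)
Your proof is correct and follows essentially the same route as the paper: the bijection with self-avoiding polygons gives the count $n\,a_n$ for clumps of size $n$ containing the anchor cube, and the exponential bound $a_n\le C(\kappa')^n$ with $\kappa<\kappa'<1/\beta$ makes both the normalizing sum and every moment sum a convergent power series (the paper picks the specific value $\kappa'=\sqrt{\kappa/\beta}$, but that is the same idea). Your exponent $n^{k+1}$ in the moment bound is in fact slightly sharper than the paper's $n^{k+2}$; both suffice.
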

\begin{proof}
    First, we prove that $\Pi$ given in \eqref{pidefeq} is a valid distribution on the space of cubical sets on $\mathbb{R}^2$. Letting $\tilde \Pi$ denote the unnormalized $\Pi$, we note that
    \begin{align*}
        \sum_{\bs Y\subset \mathcal{K}^2} \tilde\Pi(\bs Y) =  \sum_{n=1}^{\infty}\sum_{\stackrel{\bs Y\subset \mathcal{K}^2}{\card(\bs Y)=n}}\beta^{n}\cdot \ind\{\text{$\bs Y$ is a clump and $[-1/2,1/2]^2\in \bs 
        Y$}\}
    \end{align*}
    where the sum is over all cubical sets on $\mathbb R^2$.

    Remark~\ref{bijsaprmk} gives  
    \begin{align}
        \sum_{\stackrel{\bs Y\subset \mathcal{K}^2}{\card(\bs Y)=n}} \ind\{\text{$\bs Y$ is a clump and $[-1/2,1/2]^2\in \bs  
        Y$}\} \leq n a_n,\label{sumindyclump}
    \end{align}
 where $a_n$ was defined as the number of unrooted self-avoiding polygons of area $n$. Combining \eqref{sumindyclump} with the previous equality and using \eqref{anbound}, we obtain that for any $c_1>1$
\begin{align}\label{sumpitileqn}
        \sum_{\bs Y\subset \mathcal{K}^2} \tilde\Pi(\bs Y) &= \sum_{n=1}^{\infty}\beta^{n}n a_n
        \leq C\cdot \sum_{n=1}^{\infty} (c_1 \kappa)^n \beta^{n}n, 
\end{align}
    for some $C>0$ \eqref{anbound}. Choosing $c_1 = \sqrt{1/\kappa\beta }$,
    \begin{align*}
        \sum_{\bs Y\subset \mathcal{K}^2} \tilde \Pi(\bs Y) \leq C\cdot \sum_{n=1}^{\infty} (\sqrt{\kappa\beta})^n n <\infty 
    \end{align*}
    as $\kappa\beta<1$, and therefore, $\tilde \Pi$ is normalizable giving the distribution $\Pi$.
    
    To prove that $\card(\bs Y)$ has finite moments under $\Pi$, we write, for any $c_1>1$
    \begin{align*}
        \ex_{\Pi}[(\card(\bs Y))^k] \leq  C'\cdot \sum_{n=1}^{\infty} (c_1 \kappa)^n \beta^{n}n^{k+1}, 
    \end{align*} 
for some constant $C'>0$, which can be chosen as the ratio of $C$ \eqref{anbound} and the normalization constant of $\tilde\Pi$. Choosing again $c_1 =  \sqrt{1/\kappa\beta }$, we obtain
\begin{align*}
        \ex_{\Pi}[(\card(\bs Y))^k]   \leq C'\cdot \sum_{n=1}^{\infty} (\sqrt{\kappa\beta})^n n^{k+1} <\infty. 
    \end{align*}
for all $k\in\{1,2,\ldots\}$.
\end{proof}

Now we are ready to prove Theorem~\ref{stationarythm}. 
\begin{proof}[Proof of Theorem~\ref{stationarythm}]
    Note the embedded discrete-time Markov chain
    \begin{align*}
        \hat{\bs X}_i = \bs X(\tau_i),
    \end{align*}    
    $i\in \zpl$ of $\bs X(t)$ is irreducible due to Theorem~\ref{indentthm}. This is because any pair of clumps $\bs V, \bs W$ is connected through the indent of $\bs V$ to the single cube at the origin followed by reversing the indent steps of $\bs W$ as expansions. This proves the irreducibility of $\bs X(t)$. Note that 2-indentability is crucial for this argument since the CTMC never chooses the cube at the origin to indent, therefore there always needs to be another cube with an indentable community.

    Next, we show that $\Pi$ satisfies the detailed balance equation for the CTMC 
    \begin{align}\label{baleqn}
        \lambda_{\bs Y, \bs Z}(t) {\Pi(\bs Y)}=\lambda_{\bs Z, \bs Y}(t) {\Pi(\bs Z)} 
    \end{align}
for all $t\geq 0$, and all clumps $\bs Y, \bs Z$ that contain $[-1/2,1/2]^2$, where the transition rates $\lambda_{\bs Y, \bs Z}$ at time $t$ is defined as
\[
\lambda_{\bs Y, \bs Z}(t) \coloneqq \lim_{h\to 0} \frac{\pr\left[\bs X(t+h) = \bs Z | \bs X(t)=\bs Y \right]} {h}.
\]
First note that
if $\bs Y = \bs Z$ then \eqref{baleqn} is trivially satisfied. Otherwise, if $\card(\bs Y) = \card(\bs Z) + 1$, and $\bs Z$ is indentable to $\bs Y$, then $\Pi(\bs Y) = \beta\cdot \Pi(\bs Z)$. Also note that
\[ \pr\left[\bs X(t+h) = \bs Y | \bs X(t)=\bs Z \right] = p_\uparrow(1-e^{- h}) e^{-h  (\card(\mc M(\bs Z))-1)} +o(h),\]
\rev{where the $1-e^{-h}$ term is the probability of the clock of $\bs Z\setminus \bs Y$ ticking between time $t$ and $t+h$, $e^{-h  (\card(\mc M(\bs Z))-1)}$ is the probability of the clocks of candidate cubes \emph{not} ticking between $t$ and $t+h$, and $o(h)$ term accounts for more than one evenr happening in the time interval.} Similarly,
\[ \pr\left[\bs X(t+h) = \bs Z | \bs X(t)=\bs Y \right] = p_\downarrow(1-e^{- h}) e^{-h (\card(\mc M(\bs Y))-1)} +o(h) .\]
Therefore,
\[ \frac{\lambda_{\bs Z,\bs Y}(t)}{\lambda_{\bs Y,\bs Z}(t)} = \lim_{h\to 0}\frac{ \pr\left[\bs X(t+h) = \bs Y | \bs X(t)=\bs Z \right]} { \pr\left[\bs X(t+h) = \bs Z | \bs X(t)=\bs Y \right]} =  \frac{p_\uparrow}{p_\downarrow} = \beta,  \]
establishing \eqref{baleqn} for this case. On the other hand, if $\card(\bs Z) = \card(\bs Y) + 1$ and $\bs Y$ is indentable to $\bs Z$, 
\begin{equation*}
\begin{split}
\frac{\lambda_{\bs Z,\bs Y}(t)}{\lambda_{\bs Y,\bs Z}(t)} = \lim_{h\to 0}\frac{ \pr\left[\bs X(t+h) = \bs Y | \bs X(t)=\bs Z \right]} { \pr\left[\bs X(t+h) = \bs Z | \bs X(t)=\bs Y \right]} =  \frac{p_\downarrow}{p_\uparrow} = &\frac{1}{\beta} \\
=& \frac{\beta^{\card(\bs Y)}}{\beta^{\card(\bs Z)}}=\frac{\Pi(\bs Y)}{\Pi(\bs Z)}.
\end{split}
\end{equation*}

For all other cases, both sides of \eqref{baleqn} is zero, establishing the detailed balance equation, and therefore that $\Pi$ is the stationary distribution.

To conclude ergodicity we lastly need to show that the CTMC is \emph{nonexplosive}, that is, the probability that it makes infinitely many transitions in finite time is zero. This follows from the criteria that prohibits explosions given in the Corollary 4.4 of \cite{asmussenbook}, which translates to our setting as
\begin{align*}
     \sum_{\bs Y\subset \mathcal{K}^2} \Pi(\bs Y)\card(\mc M(\bs Y))<\infty.
\end{align*}
Note that for all $\bs Y\subset \mathcal{K}^2$, \[\card(\mc M(\bs Y))\leq 9\card(\bs Y), \]
and therefore
\begin{align*}
     \sum_{\bs Y\subset \mathcal{K}^2}  \Pi(\bs Y)\card(\mc M(\bs Y))\leq 9\sum_{\bs Y\subset \mathcal{K}^2} \Pi(\bs Y)\card(\bs Y) &= 9\cdot\ex_\Pi[\card(\bs Y)]\\&<\infty,
\end{align*}
due to Lemma~\ref{pigoodlem}.
\end{proof}

\begin{corollary}
If $\beta= \frac{p_\uparrow}{p_\downarrow}<\frac{1}{\kappa}$, for all cubical sets $\bs Y$, the CTMC, $\bs X(t)$, satisfies  
\[\lim_{t\to\infty} \pr[\bs 
 X(t) =\bs Y ] = \Pi(\bs Y).\]
 \end{corollary}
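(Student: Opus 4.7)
The plan is to realize this as a direct consequence of Theorem~\ref{stationarythm} together with the classical convergence theorem for ergodic continuous-time Markov chains on a countable state space. All three hypotheses needed --- irreducibility, existence of a stationary distribution, and non-explosiveness --- have already been verified in Theorem~\ref{stationarythm}, so the work is to assemble them and cite the standard result.

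The steps run as follows. First, I would observe that the effective state space is countable: it is contained in the set of finite subsets of $\mc{K}^2$, and by construction of the chain every reachable state is a clump containing $[-1/2,1/2]^2$. Second, I would gather the three ingredients already provided in the proof of Theorem~\ref{stationarythm}: irreducibility of $\bs X(t)$ on this class, which rests on the 2-collapsibility result Theorem~\ref{collapsethm}; the stationary distribution $\Pi$ verified via the detailed balance equation; and non-explosiveness, obtained from $\sum_{\bs Y}\Pi(\bs Y)\,\card(\mc M(\bs Y))<\infty$ together with Lemma~\ref{pigoodlem}. Third, I would invoke the standard pointwise convergence theorem for ergodic CTMCs on a countable state space --- for instance the convergence statement companion to Corollary~4.4 of \cite{asmussenbook}, already cited for the non-explosion criterion --- which asserts that for any irreducible, non-explosive CTMC admitting stationary distribution $\Pi$, one has $\pr[\bs X(t)=\bs Y\mid \bs X(0)=\bs Z]\to\Pi(\bs Y)$ as $t\to\infty$ for every pair $\bs Y,\bs Z$ in the same communicating class.

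There is no serious obstacle: the corollary is essentially a restatement of the ergodicity conclusion of Theorem~\ref{stationarythm} at the level of one-dimensional marginals. The only bookkeeping concerns the trivial cases. If $\bs Y$ is not a clump containing the origin cube, then $\pr[\bs X(t)=\bs Y]=0=\Pi(\bs Y)$ for all $t\geq 0$. If $\bs Y$ is a clump containing $[-1/2,1/2]^2$, it lies in the single communicating class together with the initial state $\bs X(0)=\{[-1/2,1/2]^2\}$ by irreducibility, and the convergence follows directly. This disposes of all cases and completes the proof.
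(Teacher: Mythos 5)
Your proposal is correct and matches the paper's intent: the corollary is stated without a separate proof precisely because it is the standard pointwise-convergence consequence of the irreducibility, non-explosiveness, and stationarity established in Theorem~\ref{stationarythm}. Your handling of the trivial case (states that are not clumps containing $[-1/2,1/2]^2$ have probability zero at all times) is a sensible piece of bookkeeping that the paper leaves implicit.
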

 \begin{remark}\label{fugabetarmk}
     As would be expected based on Theorem~\ref{stationarythm}, numerical experiments we carried out (see Figure~\ref{snapshots}) show an excellent alignment of the critical $\beta$ for the existence of a stationary distribution and the inverse of the critical fugacity ($\kappa$) available in the self-avoiding polygon literature. We have observed in our experiments that for fixed $p_{\downarrow}=1$, the clump in the CTMC is observed to stay bounded when $p_{\uparrow}<0.25$ and grow indefinitely when $p_{\uparrow}>0.25$. The numerically estimated values for the growth constant for the square lattice $\kappa$, see p.6 of \cite{guttbook}, suggests the phase change to be at around $p_{\uparrow}=0.252\approx 1/3.971\approx 1/\kappa$. In the over-critical regime, the value of $p_{\uparrow}$ seems to determine the ``shape'' of the growing clump, the lower $p_{\uparrow}$ values correspond to the ``branchier'' clumps.
 \end{remark}

\begin{figure}[ht]
\centering
\includegraphics[width=.95\textwidth]{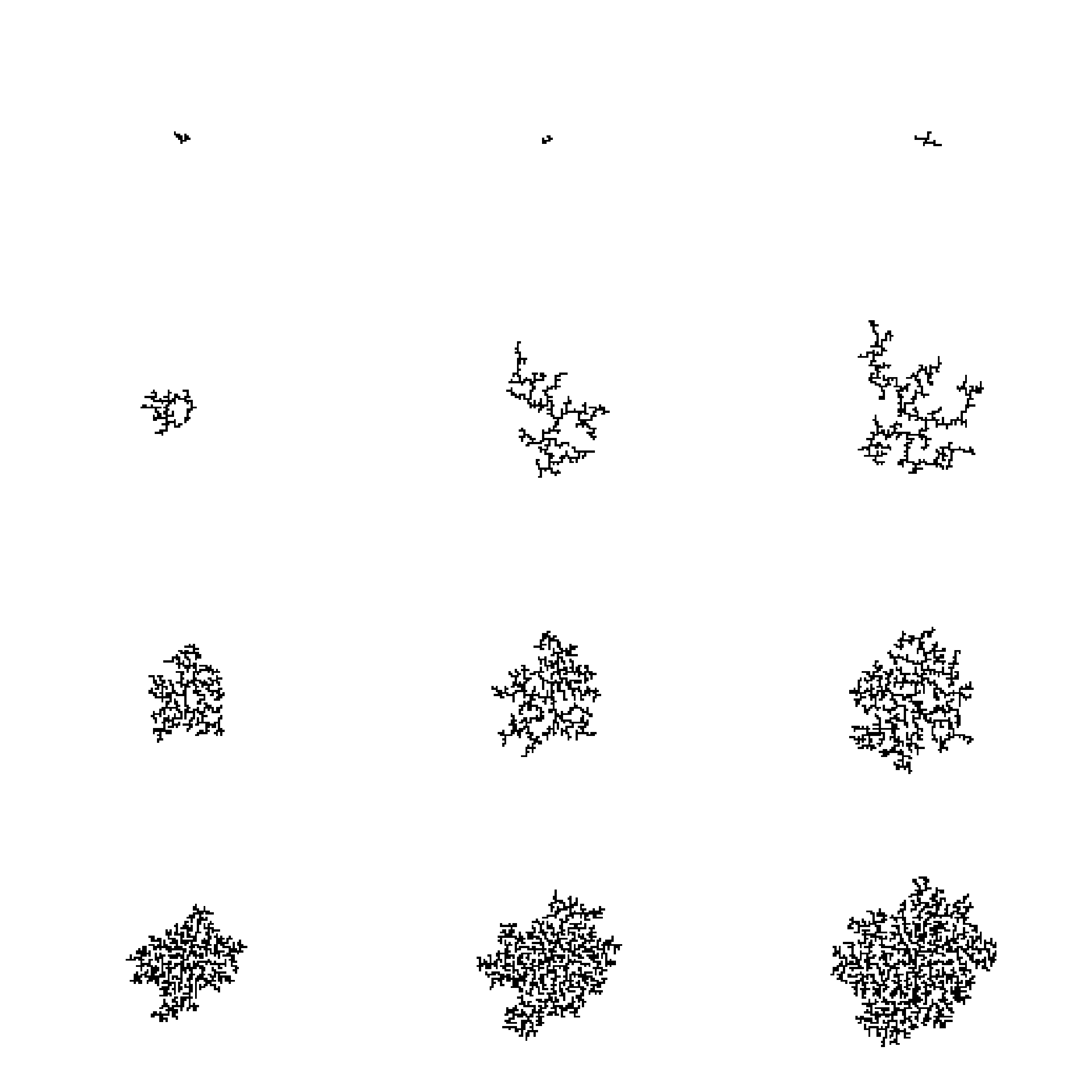}
    \caption{Snapshots of the Markov dynamics for fixed $p_{\downarrow}=1$ and different $p_{\uparrow}$ values. The rows include regularly sampled snapshots for $p_{\uparrow}$ values 0.24, 0.26, 0.4, 0.8 in order. Sampling frequency is adjusted across different $p_{\uparrow}$ to better illustrate the evolution of the cubical set. Note the phase change between $p_{\uparrow}=0.24$, and $p_{\uparrow}=0.26$, as predicted in \cite{guttbook}.} 
    \label{snapshots}
\end{figure}

The Theorem~\ref{stationarythm} is true beyond $d=2$ case but we postpone the proof till a later publication, and state for now the following.
\begin{conjecture}
    Theorem~\ref{stationarythm} holds for all $d\geq 2$ with $d$-dependent constants $\kappa(d)$.
\end{conjecture}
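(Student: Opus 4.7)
The plan mirrors the structure of the $d=2$ argument in Theorem~\ref{stationarythm}, extracting its two non-trivial combinatorial inputs and generalizing each to arbitrary dimension. The first input is an analogue of Theorem~\ref{collapsethm}: every clump of cardinality at least two in $\Rd$ should admit two distinct cubes, neither equal to $[-\frac{1}{2},\frac{1}{2}]^d$, whose removal preserves the clump property. The second input is a dimension-dependent analogue of the growth rate \eqref{anlimeqn}: letting $a_n^{(d)}$ denote the number of clumps of cardinality $n$ in $\Rd$ containing the origin cube, one needs $\kappa(d) \coloneqq \lim_{n\to\infty} (a_n^{(d)})^{1/n}$ to exist and be finite. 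Once these two ingredients are available, the detailed-balance computation, the non-explosion check (which uses only the crude bound $\card(\mc M(\bs Y)) \leq 3^d\,\card(\bs Y)$), and the finite-moment estimate of Lemma~\ref{pigoodlem} all transfer to general $d$ essentially verbatim, with $\kappa$ replaced by $\kappa(d)$ throughout.

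For the $2$-collapsibility step, the idea is to exploit that a clump, being regular and contractible with the local-homology condition implicit in the definition, is homeomorphic to a $d$-disk, so that its boundary is a cubical $(d-1)$-sphere embedded in $\zd$. I would then try to locate ``convex corners'' on this boundary, i.e.\ cubes $X$ such that $\mc N_{\bs X}(X) \setminus X$ is still a clump, and show that at least two of them exist away from the origin cube. The planar proof enumerated obstructive configurations (the $2$-by-$3$ rectangle, the double-deck, and the Human) and split into a ``round'' and a ``branchy'' regime according to the length of the longest path; no such clean dichotomy is available in higher dimensions. Instead I would attempt a discrete-Morse style induction on the diameter of the bounding box of $\bs X$, showing that on its outermost ``slab'' of cubes one can always find two cubes whose elementary collapses preserve the $d$-disk structure. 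The fact that clumps are cubical $d$-disks, not merely contractible cubical sets, is crucial in order to sidestep pathologies such as cubical analogues of the dunce hat or Bing's house.

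For the growth bound, I would first note the easy estimate $a_n^{(d)} \leq \lambda_d^{\,n}$ obtained by comparing clumps with lattice animals in $\zd$, where $\lambda_d$ is the (finite) lattice-animal growth constant. Existence of the limit $\kappa(d)$ would then be established by a Hammersley--Welsh style concatenation: glue two clumps of sizes $m$ and $n$ along a short neck of auxiliary $d$-cubes chosen so that the resulting cubical set remains regular and contractible, giving an inequality of the form $a_m^{(d)}\,a_n^{(d)} \leq C \cdot a_{m+n+c(d)}^{(d)}$, after which Fekete's lemma delivers the limit. The principal obstacle in the whole program is the $2$-collapsibility step in $d \geq 3$: the planar tools, most notably the Jordan curve theorem underlying Lemma~\ref{lsubsetx}, have no direct analogues, and a genuinely topological input---perhaps via shellability of cubical PL $d$-disks, or a discrete Morse-theoretic argument on the boundary sphere---appears unavoidable.
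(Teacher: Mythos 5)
This statement is a \emph{conjecture}: the paper offers no proof for $d\geq 3$ and explicitly postpones it to a later publication, so there is nothing to compare your attempt against. Judged on its own terms, your outline correctly isolates the two inputs that would be needed — a higher-dimensional analogue of Theorem~\ref{collapsethm} to get irreducibility, and a subexponential bound $a_n^{(d)}\leq C\kappa(d)^n$ on the number of clumps of size $n$ to make $\Pi$ normalizable — and you are right that the detailed-balance, non-explosion, and moment computations then go through verbatim. The growth-constant half is essentially routine (the lattice-animal comparison already suffices for Lemma~\ref{pigoodlem}; the concatenation step is only needed if one insists the limit exists).

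The collapsibility half, however, is where the proposal is a research program rather than a proof, and you acknowledge this. Two concrete warnings beyond what you note. First, your starting premise — that a clump is homeomorphic to a $d$-disk with boundary a cubical $(d-1)$-sphere — does not follow from the paper's definition (regular $+$ contractible) in high dimensions: for $d\geq 4$ there exist compact contractible $d$-manifolds with boundary that are not balls (Mazur/Newman-type examples), and all of their points satisfy the paper's regularity condition, so the definition does not obviously exclude cubical realizations of such objects; the introduction's claim of disk-ness is itself only justified in the plane. Second, even granting that clumps are cubical $d$-balls, shellability and collapsibility of triangulated or cubulated balls can fail (non-shellable balls exist in $d=3$ already), so ``find two convex corners on the outer slab'' is not a safe induction step: a single elementary collapse at a boundary cube can disconnect the complement or create nontrivial local topology in ways that have no planar analogue. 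Any successful argument will have to engage with these phenomena directly, which is presumably why the authors left the statement as a conjecture.
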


\section{\texorpdfstring{\rev{Rare Events}}{}}

In this part, we will present a Poisson process approximation theorem for the ``rare'' time instances that the clump in CTMC reaches ``unusually'' large sizes. \rev{The premise here is that the maximum size of the clump for each time interval between consecutive visits of the Markov process to the single cube is independent, and with the correct scaling of time and the choice of threshold size, the limiting distribution of the number of occurrences of unusually large clumps converge in distribution to Poisson.} However, given that the dynamic clump passes a threshold of a large size at some time point, one would expect that it will have fluctuations of going up and down with positive probability, therefore a Poisson process approximation would not hold in the strict sense. However, a point process that counts each ``cluster'' of fluctuations as one, might qualify for Poisson approximation with correct time scaling. It turns out this is correct. We first introduce some notation in order to define how we count clusters. We define $\zeta_i$ to be the time that $\bs X(t)$ comes back to being composed of only the unit square in the origin $i$\ts{th} time, and we let $\xi_i$ be the largest size the clump has reached in the  $i$\ts{th} interval between successive returns to the origin. Formal definitions follow.
\begin{definition}
Assign $\zeta_0=0$. Given $\bs X(t)$, for $i\in\zpl$, define 
\begin{align*}
    \zeta_{i+1} \coloneqq \inf\{\text{$t:$ $t>\zeta_{i}$, $\card(\bs X(t))=1$, $\exists\, \hat{t}$ with}&\text{ $\zeta_{i}<\hat t<t$,}\\&\text{  and $\card(\bs X(\hat t))=2$}  \},
    \end{align*}
    and 
    \begin{align}
    \xi_{i+1} &\coloneqq \sup_{\zeta_{i}\leq t<\zeta_{i+1} } \card(\bs X(t)).\label{xii_def_eq}
\end{align}
\end{definition} 

\begin{definition}\label{mudef}
If $\beta<\frac{1}{\kappa}$ then ergodicity and the existence of the stationary distribution $\Pi$ imply the positive recurrence of the CTMC (see Theorem 3.5.3 of \cite{norrisbook}), which leads us to define the (finite) expected recurrence time, 
\begin{align*}
    \mu\coloneqq \ex[\zeta_1]\in(0,\infty).
\end{align*}
\end{definition}

Furthermore, we will define below $\zeta^T_i$ to be the $i$\ts{th} time that $\bs X(t)$ reaches the size $T>0$ after being of size one. The last time $\bs X(t)$ was of size one before $\zeta^T_i$ will be denoted with $\zeta_{\sigma(i)}$, and the first time it reaches size $T$ after $\zeta_i$ will be denoted with $\zeta^T_{\upsilon(i)}$.
\begin{definition}\label{timelinedefs}
Assume $\beta<\frac{1}{\kappa}$, and let, for all $T>0$,
\begin{align}\label{etaTdefeqn}
     \eta_T \coloneqq \frac{ \mu  }{\pr[\xi_1\geq T]}.
\end{align} 
 Given $\bs X(t)$ and $T$, assign $\zeta^T_0=0$, and for $i\in\zpl$, define 
\begin{align*}
\begin{split}
    \zeta^T_{i+1} \coloneqq \inf\{\text{$t:$ $t>\zeta^T_{i}$, $\card(\bs X(t))\geq T$, $\exists\, \hat{t}$}&\text{ with $\zeta^T_{i}<\hat t<t$}\\ &\text{ and $\card(\bs X(\hat t))=1$}\}.
\end{split}
\end{align*}
Lastly, define $\sigma(i):\zpl\setminus\{0\}\to\zpl$ as
\begin{align*}
    \sigma(i)\coloneqq \sup\{k:\, k\geq 0,\, \zeta_k< \zeta^T_i \},
\end{align*}
and $\upsilon(i):\zpl\to\zpl\setminus\{0\}$,
\begin{align*}
    \upsilon(i)\coloneqq \inf\{k:\, k\geq 1,\, \zeta_i< \zeta^T_k \}.
\end{align*}
\end{definition}
 
\begin{remark}
    By definition, for all $i\in\zpl$,
    \begin{align*}
        \zeta_i\leq \zeta_{\sigma(\upsilon(i))}<\zeta^T_{\upsilon(i)} <\zeta_{\sigma(\upsilon(i))+1}.
    \end{align*}
\end{remark}

 \begin{figure}[ht]
\centering
\begin{tikzpicture}
    \draw[thick] (0,0) -- (4.1,0);
    \draw[thick] (4.8,0) -- (6.9,0);
    \draw[thick,->] (7.6,0) -- (9.5,0);

    \node at (4.45,0) {$\cdots$};
    \node at (7.3,0) {$\cdots$};
    \node[above] at (-2,0.12) {$t=$};
    \node[below] at (-2,-0.12) {$\card(\bs X(t))=$};

    \foreach \x/\t/\event in { 0/{$\zeta_0=\zeta^T_0=0$}/{$1$}, 
    1/{}/{$2$},
    1.5/{}/{$3$},
    1.8/{}/{$2$},
    2.4/{$\zeta_1$}/{$1$},
    2.8/{}/{$2$},
    3.5/{$\zeta_2$}/{$1$},
    3.9/{}/{$2$},
    5.1/{$\zeta^T_1$}/{$T$},
    6.0/{}/{$T+1$},
    6.8/{}/{$T$},
    8.0/{}/{$2$},
    8.4/{$\zeta_3$}/{$1$}
    }
        {
        \draw[very thick] (\x,0.12) -- (\x,-0.12);
        \node[above] at (\x,0.12) {\t};
        \node[below] at (\x,-0.12) {\event};
    } 
\end{tikzpicture}
\caption{An example timeline illustrating the events defined in Definition~\ref{timelinedefs}. The ticks represent the time instances where the size of the clump changes. Below the timeline are the sizes of the clump at those times. Specific time instances that we have named are shown above.  Note $\sigma(1) = 2$, $\upsilon(0)=\upsilon(1) =1$, and $\xi_1=3$, $\xi_2=2$, $\xi_3\geq T+1$ for this example.}
\label{timelinefig}
\end{figure}

The asymptotics of the quantity $\eta_T$ will control the expected \emph{sojourn time} of the process from a single square to a clump of size $L$. 

Lastly, we give necessary formal definitions for point processes and their convergence in distribution. More abstract and general theory of point processes can be found in \cite{kallenbergbook}. The following special case, though, will suffice for our purposes. 
\begin{definition}\label{ppdef}
    A (simple) point process $N$ on $\rpl$ is a random counting measure in the form 
    \begin{equation*}
        N = \sum_{i}\delta_{\tau_i},
    \end{equation*}
    where $\delta_x$ denotes the Dirac measure at $x\in \rpl$, and $\tau_i\in \rpl$ are distinct for $i\in\{1,2,\ldots\}$.
    Given a point process $N$, and a sequence of point processes $N_1, N_2,\ldots$, it is said that $N_n$ \emph{converges in distribution (under the vague topology)} to $N$ if and only if the sequence of random variables
    $\int f \der N_n$    
    converge in distribution to $\int f \der N$ for every bounded measurable continuous function $f$ with bounded support on non-negative reals. We denote convergence in distribution also as 
    \[N_n \xrightarrow[n\to\infty]{\mathtt{d}} N.\]
\end{definition}
 
 The following is a standard result in the study of point processes.
 \begin{theorem} [Theorem A1 in \cite{leadbetterbook}]\label{leadbthm}
     Given the notation in Definition~\ref{ppdef}, $N_n \xrightarrow[n\to\infty]{\mathtt{d}} N$ if 
\begin{align*}
    \lim_{n\to\infty}\ex[N_n((c,d])] =
    \ex[N((c,d])]
\end{align*}
     for all $0<t_1<t_2<\infty$, and
\begin{align}\label{vacumprob}
    \lim_{n\to\infty}\pr[N_n(B)=0] = \pr[N(B)=0], 
\end{align}
     for all $B=\cup_{i=1}^k(t_{1,i},t_{2,i}]$ where $0<t_{1,i}<t_{2,i}<\infty$, $1\leq i\leq k$.
 \end{theorem}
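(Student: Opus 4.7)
The plan is to reduce the theorem to Kallenberg's uniqueness principle for simple point processes on $\rpl$: the law of a simple point process is determined by its avoidance function $B\mapsto \pr[N(B)=0]$ evaluated on a determining class of sets, here the bounded half-open intervals and their finite disjoint unions. The strategy has two prongs --- first-moment control to obtain tightness, and vacuum probabilities to identify the limit.

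First I would show tightness of $\{N_n\}$ in the vague topology on locally finite measures on $\rpl$. The intensity hypothesis implies $\sup_n \ex[N_n((t_1,t_2])]<\infty$ for every bounded interval, so by Markov's inequality $\sup_n \pr[N_n(K)>M]\to 0$ as $M\to\infty$ for every compact $K\subset\rpl$. This is the standard vague-topology tightness criterion for random counting measures, so by Prokhorov every subsequence of $\{N_n\}$ has a further subsequence converging in distribution to some random measure $N^*$. It then suffices to show every such subsequential limit $N^*$ has the same law as $N$, for then $N_n\xrightarrow{\mathtt{d}} N$.

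Second, I would identify $N^*$ with $N$. The intensity convergence forces the first-moment measure of $N^*$ to equal that of $N$, from which a standard argument extracts that $N^*$ is almost surely a simple point process. The vacuum probability hypothesis gives $\pr[N_n(B)=0]\to \pr[N(B)=0]$ for every finite disjoint union of half-open intervals $B$; once this is transferred to $N^*$ along the convergent subsequence, Kallenberg's uniqueness theorem forces $N^*\stackrel{\mathtt{d}}{=}N$. The main obstacle is that the functional $\mu\mapsto\mathbf{1}\{\mu(B)=0\}$ is not continuous in the vague topology --- it fails precisely when the endpoints of $B$ carry mass, so vacuum probabilities cannot in general be pushed through weak limits. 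The workaround I would employ is to restrict $B$ to finite unions whose endpoints avoid the (at most countable) set of atoms of the intensity measure of $N$; for such $B$, the boundary $\partial B$ is almost surely a null set of $N^*$, so $B$ is a continuity set and $\pr[N_n(B)=0]\to\pr[N^*(B)=0]$ along the subsequence. Combined with the hypothesis, this yields $\pr[N^*(B)=0]=\pr[N(B)=0]$ on a class of sets still rich enough to be determining for the avoidance function, and Kallenberg completes the identification.
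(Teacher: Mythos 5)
The paper does not prove this statement at all: it is imported verbatim as Theorem A1 of the cited reference \cite{leadbetterbook} and used as a black box, so there is no internal proof to compare against. Your proposal is, in substance, the standard Kallenberg-style proof of that external theorem (tightness from the uniformly bounded first moments, subsequential limits, identification of the limit through avoidance probabilities on a dissecting ring of finite unions of half-open intervals whose endpoints dodge the fixed atoms of $N$), and the architecture is sound, including your correct handling of the discontinuity of $\mu\mapsto\ind\{\mu(B)=0\}$ in the vague topology. One step is stated too strongly, though: vague convergence in distribution does not let you conclude that the first-moment measure of a subsequential limit $N^*$ \emph{equals} that of $N$; expectations are only lower semicontinuous, so the hypothesis yields $\ex[N^*(I)]\leq\liminf_n\ex[N_n(I)]=\ex[N(I)]$ for continuity intervals $I$. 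That inequality is in fact all the classical argument needs --- the matching avoidance functions force the support process of $N^*$ to have the law of the simple process $N$, whence $\ex[N^*(I)]\geq\ex[N(I)]$, and the two inequalities together give simplicity of $N^*$ and then $N^*\overset{\mathtt{d}}{=}N$ --- but as written your claim of equality is unjustified and should be replaced by this two-sided squeeze. With that repair your sketch is a faithful reconstruction of the proof the paper delegates to the literature.
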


 Equipped with the necessary terminology, now we are ready to state our main theorem regarding Poisson convergence.

\begin{theorem}  \label{convpoissonthm}
Define the counting process
\begin{align}\nn
     N'_T \coloneqq \sum_{i\geq 1}  \delta_{\zeta^T_i/\eta_T}.
 \end{align}

 If $\beta<\frac{1}{\kappa}$ then
 \begin{align*}
      N'_T \xrightarrow[T\to\infty]{\mathtt{d}} N^*,
 \end{align*} 
  where with $N^*$, we denote the homogeneous Poisson point process on the nonnegative real line with unit intensity.
\end{theorem}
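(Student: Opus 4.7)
My plan is to apply Theorem~\ref{leadbthm}, which reduces the vague Poisson convergence of $N'_T$ to $N^*$ to verifying (i) $\ex[N'_T((c,d])]\to d-c$ for every $0<c<d$, and (ii) $\pr[N'_T(B)=0]\to\exp(-|B|)$ for every finite union $B$ of bounded half-open intervals. The backbone is the strong Markov property at the regeneration times $\zeta_i$: because $\bs X(\zeta_i)$ is deterministically the singleton $\{[-1/2,1/2]^2\}$, the excursions $\{\bs X(t)\}_{\zeta_{i-1}\leq t<\zeta_i}$ are i.i.d.\ copies of $\bs X$ restarted from this singleton. Setting $T_i:=\zeta_i-\zeta_{i-1}$ and $A_i:=\ind\{\xi_i\geq T\}$ yields i.i.d.\ pairs with $\ex[T_1]=\mu$ (Definition~\ref{mudef}) and $p_T:=\pr[A_1=1]=\mu/\eta_T$. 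Since the $A_i$ depend on disjoint, independent excursions, the sequence $\{A_i\}_i$ is itself i.i.d.\ $\operatorname{Bernoulli}(p_T)$, though the pairs $(T_i,A_i)$ are correlated within each excursion. I would first reduce $N'_T$ to the ``excursion-start'' process $\Phi_T:=\sum_{i:\,A_i=1}\delta_{\zeta_{i-1}/\eta_T}$: by the definition of $\zeta'_k$, each excursion contributes at most one $\zeta'$-point, and exactly one whenever $A_i=1$; that point lies in $[\zeta_{i-1},\zeta_i]$, so corresponding atoms of $N'_T$ and $\Phi_T$ differ by at most $T_i/\eta_T$, which vanishes in probability on any bounded rescaled window.

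For (i), since $A_i$ is independent of $(T_1,\ldots,T_{i-1})$ and hence of $\zeta_{i-1}$,
\[
\ex[\Phi_T((c,d])]=\sum_{i\geq 1}p_T\,\pr[c\eta_T<\zeta_{i-1}\leq d\eta_T]=p_T\cdot\ex[N_T^{(c,d)}],
\]
where $N_T^{(c,d)}$ counts renewal epochs in $(c\eta_T,d\eta_T]$; the elementary renewal theorem gives $\ex[N_T^{(c,d)}]\sim(d-c)\eta_T/\mu$, so the intensity converges to $d-c$. For (ii), I condition on $\mathcal{F}:=\sigma(T_1,T_2,\ldots)$: the $A_i$ are then conditionally independent with $\ex[A_i\mid\mathcal{F}]=q(T_i):=\pr[A_1=1\mid T_1=T_i]$, so
\[
\pr[\Phi_T(B)=0\mid\mathcal{F}]=\prod_{i:\,\zeta_{i-1}/\eta_T\in B}\bigl(1-q(T_i)\bigr)=\exp\!\bigl(-V_T(B)\bigr),
\]
with $V_T(B):=\sum_{i:\,\zeta_{i-1}/\eta_T\in B}-\log(1-q(T_i))$. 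A renewal-reward law of large numbers applied to the i.i.d.\ sequence $\{-\log(1-q(T_i))\}$ together with the renewal-theoretic count $\#\{i:\,\zeta_{i-1}/\eta_T\in B\}\sim|B|\eta_T/\mu$ delivers $V_T(B)\to|B|$ in probability, whence bounded convergence yields $\pr[\Phi_T(B)=0]\to\exp(-|B|)$.

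The main obstacle is the within-excursion correlation between $A_i$ and $T_i$, which prevents $\Phi_T$ from being a pure Bernoulli thinning of $\{\zeta_i\}$ and complicates the identity $\ex[-\log(1-q(T_1))]=p_T(1+o(1))$: the trivial bound only delivers $\ex[q(T_1)^2]\leq p_T$, whereas $\ex[q(T_1)^2]=o(p_T)$ is required for the correct leading constant. To secure this, I would invoke tail bounds on the joint law $(T_1,\xi_1)$: the event $A_1=1$ forces the excursion to accumulate at least $T-1$ expansion transitions, and the positive recurrence guaranteed by Theorem~\ref{stationarythm} combined with the sub-exponential growth of $\xi_1$ along an excursion of length $T_1$ gives tail control of $T_1$ sufficient to conclude. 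An alternative route, via direct factorial-moment computation for $\ex[\Phi_T(B)^{(k)}]\to|B|^k$, uses a Cauchy--Schwarz bound $\ex[A_1T_1]\leq\sqrt{p_T\,\ex[T_1^2]}$ and the insensitivity of $\ind\{\zeta_{j-1}/\eta_T\in B\}$ to shifts of size $T_i/\eta_T\to 0$ to handle the same correlation. With either route in place, Theorem~\ref{leadbthm} closes the argument.
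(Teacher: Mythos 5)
Your high-level strategy (reduce $N'_T$ to a process supported on the regeneration structure, then apply Theorem~\ref{leadbthm} / Lemma~\ref{iffconvdlem}) matches the paper's, but the reduction you choose is genuinely different and it is exactly where your argument breaks. The paper compares $N'_T$ to $\widetilde{N}_T=\sum_{i:\xi_i\geq T}\delta_{i\mu/\eta_T}$, whose atoms sit at \emph{deterministic} lattice points; since the $\xi_i$ are i.i.d., $\widetilde{N}_T(B)$ is an honest Binomial with success probability exactly $p_T=\mu/\eta_T$ and $\approx |B|\eta_T/\mu$ trials, so both conditions of Theorem~\ref{leadbthm} are immediate with no asymptotic expansion, and the only remaining work is the SLLN estimate showing $\zeta'_{\upsilon(i-1)}/\eta_T$ and $i\mu/\eta_T$ fall on the same side of each interval endpoint with high probability (the events $A_{j,T},B_{j,T}$). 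You instead anchor the atoms at the \emph{random} epochs $\zeta_{i-1}/\eta_T$, which forces you to confront the within-excursion dependence between the mark $A_i=\ind\{\xi_i\geq T\}$ and the excursion length $T_i$.

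That dependence is a genuine gap, not a technicality, and you correctly flag it yourself: your avoidance-probability computation needs $\ex[-\log(1-q(T_1))]=p_T(1+o(1))$, i.e. $\ex[q(T_1)^2]=o(p_T)$ together with $q$ bounded away from $1$, where $q(t)=\pr[\xi_1\geq T\mid T_1=t]$. Equivalently you must show $\ex[q(T_1)\mid \xi_1\geq T]\to 0$, a decoupling statement about the joint law of excursion height and duration. Nothing in the paper up to this point supplies it: Theorem~\ref{stationarythm} gives only positive recurrence ($\ex[T_1]=\mu<\infty$), not control of $\pr[\xi_1\geq T\mid T_1=t]$ uniformly in $t$, and it is not even clear a priori that $q(t)$ stays away from $1$ for atypically large $t$ (a very long excursion in the subcritical regime may well be long \emph{because} it went high). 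Your proposed fixes -- ``tail bounds on the joint law,'' ``sub-exponential growth of $\xi_1$ along an excursion,'' or the factorial-moment route using $\ex[A_1T_1]\leq\sqrt{p_T\,\ex[T_1^2]}$ -- are sketches rather than proofs, and the last one additionally presupposes $\ex[T_1^2]<\infty$, which is also not established. The lesson from the paper's proof is that this entire obstacle can be sidestepped: by thinning a deterministic lattice rather than the renewal epochs, the marks never need to be decoupled from the gaps, and the correlation you are fighting simply never enters. As written, your proof is incomplete at the step you yourself call the main obstacle.
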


Note that $N'_T$, defined above, counts the number of \rev{occurrences that} $\bs X(t)$ is of size $T$ after indenting to a single square. We will prove Theorem~\ref{convpoissonthm} by showing the Poisson convergence of a ``related'' discrete-time process of exceedances. We start with some auxiliary lemmas below. The first lemma formalizes what we mean by a ``related'' process and how this notion connects to the convergence in distribution. The second one will introduce and prove the convergence of the discrete-time process that is of interest to us. 

\begin{lemma}\label{iffconvdlem}
    Given two sequences of point processes $N_k$ and $\hat{N}_k$, $k\in\{1,2,\ldots\}$ on $\rpl$,  satisfying
    \begin{equation*}
        \lim_{k\to\infty} \pr[N_k(I) = \hat{N}_k(I)]=1,
    \end{equation*}
    for any semi-closed interval $I=[t_1,t_2)\subset\rpl$. Then, for any point process $N$ on $\rpl$
    \begin{equation*}
        N_k \xrightarrow[k\to\infty]{\mathtt{d}} N \iff \hat{N}_k \xrightarrow[k\to\infty]{\mathtt{d}} N.
    \end{equation*}
\end{lemma}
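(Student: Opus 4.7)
The plan is to work from the definition of vague convergence in Definition~\ref{ppdef}: $N_k\xrightarrow{\mathtt{d}} N$ iff $\int f\,dN_k\xrightarrow{\mathtt{d}}\int f\,dN$ for every bounded continuous $f$ with bounded support on $\rpl$. By the symmetry of the hypothesis in $N_k$ and $\hat N_k$, it suffices to prove the forward implication. So I would assume $N_k\xrightarrow{\mathtt{d}} N$, fix such an $f$ with support in some $[0,T)$, and target $\int f\,d\hat N_k\xrightarrow{\mathtt{d}}\int f\,dN$. Since the corresponding statement for $N_k$ holds by assumption, Slutsky's theorem reduces the task to showing $\int f\,dN_k-\int f\,d\hat N_k\to 0$ in probability.

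To exploit the hypothesis---which couples $N_k$ and $\hat N_k$ only through equality of integer counts on individual semi-closed intervals---I would partition $[0,T)$ into $M$ congruent subintervals $I_1,\dots,I_M$ and approximate $f$ by the step function $\tilde f\coloneqq \sum_{m=1}^M f(\inf I_m)\,\mathbf 1_{I_m}$. Uniform continuity gives $\|f-\tilde f\|_\infty\le \omega_f(T/M)\to 0$ as $M\to\infty$, where $\omega_f$ is the modulus of continuity of $f$. On the event $E_k^M\coloneqq \bigcap_{m=1}^M\{N_k(I_m)=\hat N_k(I_m)\}$, the two step-function integrals coincide, so by the triangle inequality
\begin{equation*}
    \Bigl|\int f\,dN_k-\int f\,d\hat N_k\Bigr|\;\le\;2\,\omega_f(T/M)\cdot N_k([0,T)).
\end{equation*}
A finite union bound over the $M$ intervals, combined with the hypothesis, gives $\pr[(E_k^M)^c]\to 0$ for each fixed $M$.

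The last ingredient is tightness of $\{N_k([0,T))\}_k$, which follows by dominating $\mathbf 1_{[0,T)}$ by a continuous compactly supported bump $g\ge \mathbf 1_{[0,T)}$ and noting that $N_k([0,T))\le \int g\,dN_k$ converges in distribution to the a.s.\ finite $\int g\,dN$, forcing tightness. Given $\delta,\varepsilon>0$, I would choose first $K$ so that $\pr[N_k([0,T))>K]<\varepsilon$ uniformly, then $M$ so that $2K\omega_f(T/M)<\delta$, and finally $k$ so that $\pr[(E_k^M)^c]<\varepsilon$; combining these yields $\pr[\,|\int f\,dN_k-\int f\,d\hat N_k|>\delta\,]<2\varepsilon$, closing the Slutsky reduction. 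The main subtlety is that the hypothesis only matches the integer counts on intervals rather than the underlying point locations, so a continuous test function must be handled via the step-function approximation; the tightness of the counts, drawn from the assumed convergence $N_k\xrightarrow{\mathtt{d}}N$, is what makes the approximation uniform in $k$ and allows one to pass from equality of counts on individual intervals to approximate equality of the integrals.
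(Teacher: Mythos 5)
Your argument is correct and complete, but it is genuinely different from what the paper does: the paper does not prove this lemma at all, it simply defers to Lemma~3.3 of an external reference (which in turn rests on a result of Kerstan et al.\ on infinitely divisible point processes). Your route --- Slutsky's reduction to showing $\int f\,dN_k-\int f\,d\hat N_k\to 0$ in probability, a step-function approximation of $f$ on a partition of its support into semi-closed intervals so that the hypothesis can be applied to each cell, a union bound over the finitely many cells, and tightness of the counts $N_k([0,T))$ extracted from the assumed convergence via a dominating bump function --- is elementary and self-contained, and each step checks out: on the matching event $E_k^M$ one indeed has $\hat N_k([0,T))=N_k([0,T))$, so the error is controlled by $2\,\omega_f(T/M)\,N_k([0,T))$ using only the tightness of the sequence that is assumed to converge, and the symmetry of the hypothesis handles the reverse implication. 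What your proof buys is independence from the cited literature and transparency about exactly which structure is used (only equality of counts on intervals plus local finiteness); what the citation buys the authors is brevity. One cosmetic point: the paper's Definition of convergence quantifies over bounded \emph{measurable} continuous test functions with bounded support, which is just continuous functions of compact support, so your restriction to such $f$ and the appeal to uniform continuity are legitimate.
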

\begin{proof}
    See Lemma 3.3 of \cite{movingavg}, proof of which is, in turn, based on Theorem 3.1.7 of \cite{kerstaninfdiv}.
\end{proof}
\begin{lemma}\label{disctimelem}
 Given the CTMC, $\bs X(t)$, define the exceedance point process of $\xi_i$'s \eqref{xii_def_eq} as
  \begin{align}\nn
     \widetilde{N}_T \coloneqq \sum_{\stackrel{i\geq 1}{\xi_{i}\geq T}}\delta_{i\mu /\eta_T}.
 \end{align}
If the parameter of CTMC satisfies  $\beta<\frac{1}{\kappa}$ then 
 \begin{align*}
      \widetilde{N}_{T} \xrightarrow[T\to\infty]{\mathtt{d}} N^*.
 \end{align*}
\end{lemma}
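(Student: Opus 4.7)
The plan is to apply Theorem~\ref{leadbthm} and reduce the claim to verifying (a) convergence of first moments, $\ex[\widetilde{N}_T((c,d])] \to d-c$ on every semi-closed interval, and (b) convergence of vacuum probabilities, $\pr[\widetilde{N}_T(B) = 0] \to e^{-|B|}$ on every finite disjoint union $B = \bigcup_{i=1}^{k}(t_{1,i}, t_{2,i}]$. The key structural input I would establish first is that $\{\xi_i\}_{i \geq 1}$ is an i.i.d. sequence: by the strong Markov property the CTMC regenerates at each return time $\zeta_i$ to the single-cube state, so the excursions $(\bs X(t))_{\zeta_{i-1} \leq t < \zeta_i}$ are i.i.d., and each $\xi_i$ is a measurable function of the corresponding excursion.

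Writing $p_T \coloneqq \pr[\xi_1 \geq T] = \mu/\eta_T$, I would then note that positive recurrence (Definition~\ref{mudef}) together with the non-explosion property established inside the proof of Theorem~\ref{stationarythm} forces $\xi_1 < \infty$ almost surely, so $p_T \to 0$ and $\eta_T \to \infty$ as $T \to \infty$. The number of indices $i$ placing a point of $\widetilde{N}_T$ into $(c,d]$ is
\begin{align*}
    n_T(c,d) \coloneqq \lfloor d\,\eta_T/\mu \rfloor - \lfloor c\,\eta_T/\mu \rfloor,
\end{align*}
and $n_T(c,d)\,p_T \to d-c$. The two conditions of Theorem~\ref{leadbthm} then follow in one line each: using the i.i.d.\ property of $\{\xi_i\}$,
\begin{align*}
    \ex[\widetilde{N}_T((c,d])] = n_T(c,d)\, p_T \longrightarrow d-c,
\end{align*}
and, by independence,
\begin{align*}
    \pr[\widetilde{N}_T((c,d]) = 0] = (1-p_T)^{n_T(c,d)} \longrightarrow e^{-(d-c)}.
\end{align*}
For a disjoint union $B = \bigcup_{i=1}^{k}(t_{1,i}, t_{2,i}]$ the sets of indices contributing to distinct sub-intervals are themselves disjoint, so the vacuum probability factors across the sub-intervals and converges to $\prod_{i=1}^{k} e^{-(t_{2,i}-t_{1,i})} = e^{-|B|} = \pr[N^*(B) = 0]$.

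I do not anticipate a substantive obstacle inside this lemma: it is a classical Poisson limit for a rescaled i.i.d.\ Bernoulli thinning, and the only model-specific input is the regeneration of $\bs X$ at the returns to the single-cube state. The more delicate technical step will appear one layer up, in the proof of Theorem~\ref{convpoissonthm}, where Lemma~\ref{iffconvdlem} must be used to transfer the conclusion from the discrete exceedance process $\widetilde{N}_T$ to the continuous-time hitting process $N'_T$; that step will require controlling how close $\zeta'_i/\eta_T$ is to $\sigma(\upsilon(i))\mu/\eta_T$ on any fixed compact time window, which is where finer control beyond the mere i.i.d.\ structure of the $\xi_i$'s will be needed.
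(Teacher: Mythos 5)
Your proposal is correct and follows the same route as the paper: identify the $\xi_i$ as i.i.d.\ via regeneration at the return times $\zeta_i$ (the chain always returns to the single cube at the origin, since that cube is never removable), then verify the two conditions of Theorem~\ref{leadbthm} by the elementary computations $n_T(c,d)\,p_T \to d-c$ and $(1-p_T)^{n_T(c,d)} \to e^{-(d-c)}$ with factorization over disjoint intervals. The paper's proof is terser --- it states the first-moment limit and declares the vacuum-probability check straightforward --- so your write-up simply supplies the details the paper omits.
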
  

\begin{proof}
    The conclusion is through Theorem~\ref{leadbthm}, given that $\xi_i$ are iid, which results in  
    \begin{align*}
    \lim_{T\to\infty}\ex[\widetilde{N}_T(A)] = \lim_{T\to\infty} \frac{\eta_T}{\mu} \mathcal{L}(A) \pr[\xi_1\geq T] = \mathcal{L}(A) , 
    \end{align*}
    for any semi-closed interval $A\subset \rpl$ where $\mathcal{L}(A)$ denotes the Lebesgue measure of $A$. Checking \eqref{vacumprob} is also straightforward.    
\end{proof}

Now we are ready to prove Theorem~\ref{convpoissonthm} by relating $\widetilde{N}_T$ and $N'_T$.  
\begin{proof}[Proof of Theorem~\ref{convpoissonthm}]
  Given a semi-closed interval $I\coloneqq [t_1,t_2)\subset\rpl$, we will show that, \emph{with asymptotically high probability}, 
 \begin{align*}
     \widetilde{N}_T (I) = N'_T(I),
 \end{align*}
 that is,
 \begin{align*}
\lim_{T\to\infty}\pr[\widetilde{N}_T (I) \neq N'_T(I)] = 0,
 \end{align*}
 and then the proof will follow by Lemma~\ref{iffconvdlem}. We start by defining the following events 
 \begin{align}\nn
     A_{j,T}&\coloneqq \left\{\exists i: 1\leq i< \frac{t_j\eta_T}{\mu},\, \frac{\zeta_{i}}{\eta_T }\geq t_j,\, \xi_{i}\geq T \right\},\\
     B_{j,T}&\coloneqq \left\{\exists i: i\geq  \frac{t_j\eta_T}{\mu},\, \frac{\zeta_{i}}{\eta_T }< t_j,\, \xi_{i+1}\geq T \right\}, \nn
 \end{align}
 for $j\in\{1,2\}$. Note that,
 \begin{align*}
     A_{1,T}^\complement &\subseteq \left\{\forall i: 1\leq i< \frac{t_1\eta_T}{\mu},\, \xi_{i}\geq T\;\;\text{implies}\;\; \frac{\zeta_{i}}{\eta_T }< t_1  \right\}\\
     &\subseteq \left\{\forall i: 1\leq i< \frac{t_1\eta_T}{\mu},\, \xi_{i}\geq T\;\;\text{implies}\;\;  \frac{\zeta^T_{\upsilon(i-1)}}{\eta_T } <t_1 \right\}\\ 
     &\subseteq \{\widetilde{N}_T([0,t_1)) \leq N'_T([0,t_1))\}, 
 \end{align*}
 where with $A_{1,T}^\complement$, we denote the complement of the event $A_{1,T}$. The last implication follows from that, by definition, there is a bijection between the support sets of $\widetilde{N}_T$ and $N'_T$ given by 
 \begin{align*}
    \widetilde{N}_T (\{i\mu /\eta_T\})>0 \quad\text{if and only if}\quad N'_T (\{\zeta^T_{\upsilon(i-1)}/\eta_T\})>0
 \end{align*}
 for all $i\in\zpl\setminus\{0\}$. Similarly,
\begin{align*}
     A_{2,T}^\complement &\subseteq \left\{\forall i: 1\leq i< \frac{t_2\eta_T}{\mu},\, \xi_{i}\geq T\;\;\text{implies}\;\; \frac{\zeta^T_{\upsilon(i-1)}}{\eta_T }< \frac{\zeta_{i}}{\eta_T }< t_2  \right\}\\ 
     &\subseteq \{\widetilde{N}_T([0,t_2)) \leq N'_T([0,t_2))\}. 
 \end{align*}
Also,
\begin{align*}
     B_{1,T}^\complement &\subseteq \left\{\forall i : i\geq  \frac{t_1\eta_T}{\mu} ,\, \xi_{i+1}\geq T\;\;\text{implies}\;\; \frac{\zeta_{i}}{\eta_T }\geq t_1   \right\}\\
     &\subseteq \left\{\forall i : i\geq  \frac{t_1\eta_T}{\mu} ,\, \xi_{i+1}\geq T\;\;\text{implies}\;\; \frac{\zeta_{\nu(i)}}{\eta_T }\geq t_1 \right\}\\
     &\subseteq \{\widetilde{N}_T([0,t_1)) \geq N'_T([0,t_1))\}, 
 \end{align*}
and 
\begin{align*}
     B_{2,T}^\complement \subseteq \{\widetilde{N}_T([0,t_2)) \geq N'_T([0,t_2))\}. 
 \end{align*}
Therefore 
 \begin{align*}
     A_{1,T}^\complement\cap A_{2,T}^\complement \cap B_{1,T}^\complement\cap B_{2,T}^\complement \subseteq \{\widetilde{N}_T (I) = N'_T(I)\},  
 \end{align*}
 and
 \begin{align}\label{NTneqbdeqn}
     \pr[N'_T(I)\neq \widetilde{N}_T(I)] &\leq \pr[A_{1,T}] + \pr[B_{1,T}] + \pr[A_{2,T}] + \pr[B_{2,T}]. 
 \end{align}
We will show that each term on the right hand side above has limit 0 as $T\to\infty$. We first consider $\pr[A_{1,T}]$ and write,
\begin{align}
    \pr[A_{1,T}] &\leq \sum_{i=1}^{\frac{t_1\eta_T}{\mu}} \pr[\zeta_{i} \geq t_1 \eta_T,\, \xi_i\geq T]= \frac{\mu}{\eta_T}\sum_{i=1}^{\frac{t_1\eta_T}{\mu}} \pr[\zeta_{i} \geq t_1 \eta_T]\label{A1Tineq}
\end{align}
due to \rev{union} bound and using that $\xi_i$ are iid with \eqref{etaTdefeqn}. Now, choose some $\epsilon>0$. Note that there exists  $K_\epsilon\in (0,\infty)$ which satisfies 
\begin{equation*}
    \sum_{i=1}^{\infty} \pr\left[\frac{\zeta_{i}}{i} \geq\mu+\epsilon\right]\leq K_\epsilon
\end{equation*}
due to the Strong Law of Large Numbers (SLLN). 
Choose 
\begin{equation*}
    \rho_{T,\epsilon} \coloneqq \frac{t_1}{\mu(\mu+\epsilon)}\epsilon\eta_T.
\end{equation*}
From \eqref{A1Tineq} we get,
\begin{align}\nn
    \pr[A_{1,T}] &\leq \frac{\mu}{\eta_T} \sum_{i=1}^{\lfloor t_1\eta_T/\mu-\rho_{T,\epsilon}\rfloor} \pr[\zeta_i \geq t_1 \eta_T] + \frac{\mu}{\eta_T} \sum_{i=\lfloor t_1\eta_T/\mu-\rho_{T,\epsilon}\rfloor+1}^{ t_1\eta_T/\mu} \pr[\zeta_i \geq t_1 \eta_T]\\
    &\leq \frac{\mu}{\eta_T} \sum_{i=1}^{\lfloor t_1\eta_T/\mu-\rho_{T,\epsilon}\rfloor} \pr\left[\frac{\zeta_i}{i} \geq \frac{t_1 \eta_T}{t_1\eta_T/\mu-\rho_{T,\epsilon}}\right] +  \frac{\mu}{\eta_T}\rho_{T,\epsilon} \nn\\
    &\leq \frac{\mu}{\eta_T}\sum_{i=1}^{\infty} \pr\left[\frac{\zeta_i}{i} \geq \frac{t_1 \eta_T}{t_1\eta_T/\mu-t_1\epsilon\eta_T/\mu\cdot(\mu+\epsilon)}\right] +t_1\frac{\epsilon}{\mu+\epsilon}\nn\\
    &\leq \frac{\mu}{\eta_T}\sum_{i=1}^{\infty} \pr\left[\frac{\zeta_i}{i} \geq \mu+\epsilon\right] +t_1\frac{\epsilon}{\mu+\epsilon}\leq \frac{\mu}{\eta_T} K_\epsilon + t_1\epsilon.\label{proba1tbd} 
\end{align}
Since $\lim_{T\to\infty} \eta_T = \infty$, by choosing $T$ large enough, the right hand side of the above inequality can be made as close to $t_1\epsilon$ as desired. Since \eqref{proba1tbd} holds for all $\epsilon>0$, we conclude that
\begin{align*}
    \lim_{T\to\infty} \pr[A_{1,T}] = 0. 
\end{align*}
For $\pr[B_{1,T}]$, using Markov bound again and that $\xi_i$ are iid, we write 
\begin{align}
    \pr[B_{1,T}] &\leq \frac{\mu}{\eta_T} \sum_{i=\lceil t_1\eta_T/\mu\rceil}^{\lfloor t_1\eta_T/\mu+\varrho_{T,\epsilon}\rfloor} \pr[\zeta_i < t_1 \eta_T] + \frac{\mu}{\eta_T} \sum_{i=\lfloor t_1\eta_T/\mu+\varrho_{T,\epsilon}\rfloor+1}^\infty \pr[\zeta_i < t_1 \eta_T]\label{b1tbdeqn} 
\end{align}
where
\begin{align*}
\varrho_{T,\epsilon}\coloneqq \frac{t_1}{\mu(\mu-\epsilon)}\epsilon\eta_T.
\end{align*}
We can find a $K'_\epsilon\in \rpl$, due to SLLN, such that 
\begin{equation}\nn
    \sum_{i=1}^{\infty} \pr\left[\frac{\zeta_{i}}{i} <\mu-\epsilon\right]\leq K'_\epsilon.
\end{equation}
Continuing from \eqref{b1tbdeqn}, using the inequality above,
\begin{align*}
    \pr[B_{1,T}] 
    &\leq \frac{\mu}{\eta_T}\varrho_{T,\epsilon} +  \frac{\mu}{\eta_T} \sum_{i=\lfloor t_1\eta_T/\mu+\varrho_{T,\epsilon}\rfloor+1}^\infty \pr\left[\frac{\zeta_i}{i} <\frac{t_1 \eta_T}{t_1\eta_T/\mu+\varrho_{T,\epsilon}}\right]  \\
    &\leq t_1\frac{\epsilon}{\mu-\epsilon} + \frac{\mu}{\eta_T}\sum_{i=1}^{\infty} \pr\left[\frac{\zeta_i}{i} <\mu-\epsilon\right] \leq t_1\epsilon + \frac{\mu}{\eta_T} K'_\epsilon.
\end{align*}
From which, we conclude that $\lim_{T\to\infty} \pr[B_{1,T}]=0$. Similarly,
 \begin{align*}
    \lim_{T\to\infty} \pr[A_{2,T}] = \lim_{T\to\infty} \pr[B_{2,T}] =  0. 
\end{align*}
Therefore, the proof concludes through \eqref{NTneqbdeqn} and Lemma~\ref{iffconvdlem}.  
\end{proof}
Note that we have not exactly characterized the asymptotics of the quantity $\eta_T$ as $T\to\infty$. 
Nevertheless, we can state the following asymptotic lower bound.

\begin{lemma}
    If $\beta<\frac{1}{\kappa}$ then,  \begin{equation*}
    \liminf_{T\to\infty} (\eta_T)^{1/T} \geq  \frac{1}{\kappa\beta}. 
    \end{equation*}    
\end{lemma}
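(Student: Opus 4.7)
The plan is to convert the tail estimate $\pr[\xi_1\geq T]$ into a stationary-probability bound via the cycle/renewal identity for the Markov chain, and then control that stationary tail using the sub-exponential growth of self-avoiding polygons (Remark~\ref{saprmk}).

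First I would invoke the renewal–reward identity for the positive-recurrent CTMC: for $L_T := \int_{\zeta_0}^{\zeta_1}\ind\{\card(\bs X(t))\geq T\}\,\der t$,
\[
\ex[L_T] \;=\; \mu\cdot \pr_\Pi\bigl[\card(\bs Y)\geq T\bigr].
\]
Next I would lower-bound $\ex[L_T]$ in terms of $\pr[\xi_1\geq T]$. The key observation is that the integer-valued process $t\mapsto \card(\bs X(t))$ changes by $\pm 1$ at each transition and starts at $1$, so on the event $\{\xi_1\geq T\}$ it must visit the level $T$ before $\zeta_1$; let $\tau^\ast$ denote the first such visit. At $\bs X(\tau^\ast)$ the total exit rate is at most $\card(\mc M(\bs X(\tau^\ast)))\leq 9T$, so the conditional expected holding time at $\bs X(\tau^\ast)$ is at least $1/(9T)$, during which the indicator in $L_T$ equals $1$. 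By the strong Markov property at $\tau^\ast$ this yields
\[
\ex[L_T]\;\geq\; \frac{1}{9T}\,\pr[\xi_1\geq T].
\]

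For the stationary tail, the computation already carried out in the proof of Lemma~\ref{pigoodlem} shows that $\pr_\Pi[\card(\bs Y)=n]\propto n a_n \beta^n$. Fix any $\kappa'\in(\kappa,1/\beta)$; Remark~\ref{saprmk} gives $a_n\leq C(\kappa')^n$, and a direct geometric-sum estimate produces
\[
\pr_\Pi[\card(\bs Y)\geq T]\;\leq\; C_1\, T\,(\kappa'\beta)^T.
\]
Chaining the three displays gives $\pr[\xi_1\geq T]\leq C_2 T^2(\kappa'\beta)^T$, hence $\limsup_{T\to\infty}\pr[\xi_1\geq T]^{1/T}\leq \kappa'\beta$. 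Sending $\kappa'\downarrow\kappa$ and recalling $\eta_T=\mu/\pr[\xi_1\geq T]$ from \eqref{etaTdefeqn} yields the claim.

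The main obstacle is the middle step, the comparison $\ex[L_T]\gtrsim T^{-1}\pr[\xi_1\geq T]$. The argument above works cleanly only because the size process moves in unit steps, so that level $T$ is necessarily hit whenever $\xi_1\geq T$, and because the $O(T)$ exit-rate bound is sharp enough to produce a merely polynomial prefactor $T^2$ which is harmless after taking $T$-th roots. A finer analysis (one expects the sojourn above level $T$ once hit to be substantially longer than $1/T$) could be attempted but is not required at the exponential scale, since only an asymptotic lower bound on $(\eta_T)^{1/T}$ is sought.
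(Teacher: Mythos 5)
Your proof is correct, but it follows a genuinely different route from the paper's. The paper works with the embedded discrete-time chain $\hat{\bs X}_i$, builds its reversible electrical-network representation, and bounds the escape probability $\pr[\xi_1\geq T]=\pr[\zeta_1'<\zeta_1]$ by an effective conductance, which it then upper-bounds via Rayleigh monotonicity (gluing all clumps of equal cardinality into layers and applying the series/parallel laws), arriving at $\eta_T\geq C/(\beta^T T^2 a_T)$. You instead use the Kac/renewal--reward identity $\ex[L_T]=\mu\,\pr_\Pi[\card(\bs Y)\geq T]$ over the regeneration cycle $[\zeta_0,\zeta_1)$, and convert the excursion event $\{\xi_1\geq T\}$ into occupation time by noting that the size process moves in unit steps (so level $T$ is hit) and that the holding time at the first hit is at least $\mathrm{Exp}(\card(\mc M))$ with $\card(\mc M)\leq 9T$, giving $\ex[L_T]\geq\pr[\xi_1\geq T]/(9T)$; the stationary tail $\pr_\Pi[\card(\bs Y)\geq T]\leq C_1 T(\kappa'\beta)^T$ then follows from the computation already in Lemma~\ref{pigoodlem}. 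All the steps check out: the cycle identity is legitimate here because the $\zeta_i$ are successive returns to the unique size-one state (the origin square is never removed), positive recurrence and non-explosiveness are supplied by Theorem~\ref{stationarythm}, and the polynomial prefactor $T^2$ is indeed harmless after taking $T$-th roots. What each approach buys: yours is more elementary and avoids introducing the embedded chain's distinct stationary measure $\hat\Pi$ and the network formalism altogether; the paper's conductance framework, on the other hand, is better positioned for the conjectured matching upper bound on $\eta_T$, since lower bounds on effective conductance (via flows) live naturally in that setting, whereas your occupation-time comparison only goes in one direction without a sharper estimate of the sojourn time above level $T$.
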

\begin{proof}
We use the electrical network representation (see \cite{peresbook}, Chapter-9 for definitions) of the embedded discrete-time Markov chain, $\hat{\bs X}_i$, to bound the \emph{escape probability},
\begin{align*}
    \pr[\xi_1\geq T] = \pr[\zeta^T_{1}<\zeta_1]. 
\end{align*}

Note that for any pair of clumps $\bs Y, \bs Z$, such that $|\card(\bs Z) - \card(\bs Y)| \allowbreak = 1$, the transition probability of $\hat{\bs X}_i$ satisfies, 
 \begin{align*}
     \pr(\hat{\bs X}_{i+1}=\bs Z| \hat{\bs X}_{i}= \bs Y) =\begin{cases}
         \frac{p_\uparrow}{p_\downarrow M^-(\bs Y) + p_\uparrow M^+(\bs Y) } &\text{if $\bs Z\setminus \bs Y\in \mc M^+(\bs Y)$,}\\
      \frac{p_\downarrow}{p_\downarrow M^-(\bs Y) + p_\uparrow M^+(\bs Y)} &\text{if $\bs Y\setminus \bs Z\in \mc M^-(\bs Y)$,}\\
      0 &\text{otherwise.}      
     \end{cases} 
 \end{align*}
for $i\in\zpl$, where
\begin{align}
    \mc M^+(\bs Y)&\coloneqq\{Y\in \mc M(\bs Y)\setminus \bs Y: \text{$\bs Y\cup Y$ is a clump}\}\nn\\
    \mc M^-(\bs Y)&\coloneqq\{Y\in \bs Y: \text{$\bs Y\setminus Y$ is a clump}\}, \nn
\end{align}
and 
\begin{align*}
    M^+(\bs Y)&\coloneqq\card(\mc M^+(\bs Y))\\
    M^-(\bs Y)&\coloneqq \card(\mc M^-(\bs Y)).
\end{align*}
Therefore, the detailed balance equation
\begin{align*}
    \pr(\hat{\bs X}_{i+1}=\bs Z| \hat{\bs X}_{i}= \bs Y) \hat \Pi (\bs Y) &= \pr(\hat{\bs X}_{i+1}=\bs Y| \hat{\bs X}_{i}= \bs Z) \hat \Pi (\bs Z)\\
    &= K^{-1} \beta^{\max\{\card(\bs Y), \card(\bs Z)\}}
\end{align*}
holds for the stationary distribution $\hat \Pi$ defined as 
\begin{align}\nn
    \hat \Pi (\bs Y) \coloneqq  K^{-1}\cdot \beta^{\card(\bs Y)} \left(\beta M^+(\bs Y) + M^-(\bs Y) \right),
\end{align}
where 
\begin{align*}
    K&\coloneqq \sum_{\text{$\bs Y$ is a clump}}\beta^{\card(\bs Y)} \left(\beta M^+(\bs Y) + M^-(\bs Y) \right) \\
    &\leq (\beta+1) \sum_{\text{$\bs Y$ is a clump}}\beta^{\card(\bs Y)} 9\card(\bs Y)< \infty,
\end{align*}
due to Lemma~\ref{pigoodlem}. Therefore, the electrical network representation of $\hat{\bs X}_i$ between the clumps $\bs Y$ and $\bs Z$ has conductance 
\begin{align*}
    \mc C(\bs Y,\bs Z) = \begin{cases}
    \frac{\beta^{\max\{\card(\bs Y), \card(\bs Z)\}}}{K} \,&\text{if $\bs Y$ is indentable to $\bs Z$ or vice versa,}\\
    0 &\text{otherwise.}
    \end{cases}
\end{align*}

The network diagram of $\hat{\bs X}_i$ is shown in Figure~\ref{elecnetwfig}. 
\begin{figure}[ht]
\centering
\scalebox{.5}{\begin{tikzpicture}[scale=1, line join=round, line cap=round]


\foreach \x in {-1,0,1}
\foreach \y in {-1,0,1}
{
        \draw[very thick, fill=gray!10,dashed] (\x,\y) rectangle ++(1,1);
}
 \draw[very thick, fill=gray!40] (0,0) rectangle ++(1,1);
\fill[black] (0.5,0.5) circle (1pt);
\node[anchor=north] (Label) at (0.5, 0.55) {$(0,0)$};

\tikzmath{\xshift = 9;}
\foreach \z in {7.5, 2.5, -2.5, -7.5}
{
\foreach \x in {-1,0,1}
\foreach \y in {-1,0,1}
{
        \draw[very thick, fill=gray!10,dashed] (\x+\xshift,\y+\z) rectangle ++(1,1);
}
 \draw[very thick, fill=gray!40] (\xshift,\z) rectangle ++(1,1);
 \fill[black] (\xshift + 0.5,\z+0.5) circle (1pt);
\node[anchor=north] (Label) at (\xshift+0.5,\z+ 0.55) {$(0,0)$};

\draw[thick](2.1,0.5) -- node[pos=0.55,sloped, above] {$\mc C=\beta^2/K$} (1.9+\xshift-3,0.5+\z);

\fill[black] (\xshift+4.5,0.5+\z) circle (2pt);
 \fill[black] (\xshift+5,0.5+\z) circle (2pt);
 \fill[black] (\xshift+5.5,0.5+\z) circle (2pt);

\foreach \t in {-2.5,-1.5,-0.5,0.5,1.5}
{
\draw[thick](2.1+\xshift,0.5+\z) -- (3.6+\xshift,0.5+\z+\t/2);
}
\draw[thick](2.1+\xshift,0.5+\z) -- node[pos=0.65,sloped, above] {$\mc C=\beta^3/K$} (3.6+\xshift,0.5+\z+2.5/2);

}

\draw[very thick, fill=gray!40] (\xshift+1,7.5) rectangle ++(1,1);
\draw[very thick, fill=gray!40] (\xshift,2.5+1) rectangle ++(1,1);
\draw[very thick, fill=gray!40] (\xshift-1,-2.5) rectangle ++(1,1);
\draw[very thick, fill=gray!40] (\xshift,-7.5-1) rectangle ++(1,1);

\node[anchor=north] (Label) at (0.5, -9) {\Large layer-1};
\node[anchor=north] (Label) at (0.5+\xshift, -9) {\Large layer-2};
 
\end{tikzpicture}}
\caption{Electrical network representation of $\hat{\bs X}_i$.}  
\label{elecnetwfig}
\end{figure}
From Proposition 9.5 of \cite{peresbook},
\begin{align}\nn
    \pr[\zeta^T_1<\zeta_1] = \frac{\mc C_{\text{eff}}(T)}{\hat \Pi \left([-1/2,1/2]^2 \right)},
\end{align}
where $\mc C_{\text{eff}}(T)$ denotes the effective conductance between the single square at the origin, i.e., layer-1, and the layer-$\lceil T \rceil$.

Using Corollary~9.13 of \cite{peresbook}, $\mc C_{\text{eff}}(T)$ is upper bounded by the modified network of $\hat{\bs X}_i$ where all the clumps with the same cardinality, i.e., those on the same layer, are glued together. The effective conductance of the modified network is calculated below using the parallel and series laws,      
\begin{align*}
    \mc C_{\text{eff}}(T)&\leq K^{-1}\left(\sum_{n=2}^{\lceil T\rceil }\frac{\beta^{-n}}{\sum_{\card(\bs Y)=n} M^{-}(\bs Y)}\right)^{-1} \leq 
    K^{-1}\left(\sum_{n=2}^{\lceil T\rceil }\frac{\beta^{-n}}{n\cdot n \cdot a_n}\right)^{-1} , 
\end{align*}
where the second inequality is due to $M^{-}(\bs Y)\leq \card(Y)$ and \eqref{sumindyclump}. 
By the definition of $\eta_T$, and the inequality above,
\begin{align*}
   \eta_T \geq K\mu \sum_{n=2}^{\lceil T\rceil }\frac{\beta^{-n}}{n^2 a_n}\geq \frac{C}{\beta^TT^2a_T} 
\end{align*}
for some constant $C>0$.
Therefore, the statement of the Lemma follows by \eqref{anlimeqn}.
\end{proof}

We conjecture below that, indeed, the limit, 
$ \lim_{T\to\infty}\eta_T^{1/T}$ 
exists. Proving it through a non-trivial upper bound will likely require a more delicate study of the state transition space of $\hat{\bs X}_i$, and is left for future work.

\begin{conjecture}
    If $\beta<\frac{1}{\kappa}$ then  \begin{equation*}
    \lim_{T\to\infty} \eta_T^{1/T} = \frac{1}{\kappa\beta}. 
    \end{equation*}
\end{conjecture}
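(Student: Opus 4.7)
Since the matching lower bound $\liminf_{T\to\infty}\eta_T^{1/T}\geq 1/(\kappa\beta)$ is already established, my plan is to prove the complementary upper bound $\limsup_{T\to\infty}\eta_T^{1/T}\leq 1/(\kappa\beta)$. In view of $\eta_T=\mu/\pr[\xi_1\geq T]$, this reduces to exhibiting constants $c>0$ and $k\in\zpl$ such that
\[
\pr[\xi_1\geq T]\geq c\,(\kappa\beta)^T/T^k
\]
for all sufficiently large $T$. The formula $\pr[\xi_1\geq T]=\mc C_{\text{eff}}(T)/\hat\Pi([-1/2,1/2]^2)$ used in the previous lemma then translates this into a lower bound on the effective conductance from $V_0\coloneqq [-1/2,1/2]^2$ to the level-$T$ set $A_T\coloneqq\{\bs Y:\card(\bs Y)=T\}$ in the electrical network of $\hat{\bs X}_i$.

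The natural route is dual to the shorted-network argument in the previous lemma: Thomson's principle gives $\mc C_{\text{eff}}(T)\geq 1/\mathcal{E}(\theta)$ for any unit flow $\theta$ from $V_0$ to $A_T$. I propose the \emph{growth-sequence flow}. By Theorem~\ref{collapsethm}, every clump $\bs Y\in A_T$ can be reached from $V_0$ by at least one sequence $V_0=\bs Y_1\subset\bs Y_2\subset\cdots\subset\bs Y_T=\bs Y$ of single-cube expansions; let $\mathcal{G}_T$ collect all such (clump, growth-sequence) pairs. Routing $1/|\mathcal{G}_T|$ units of flow along each sequence produces a unit flow with
\[
\theta((\bs V,\bs W))=\frac{N_{V_0\to\bs V}\,N_{\bs W\to T}}{|\mathcal{G}_T|},
\]
where $N_{V_0\to\bs V}$ and $N_{\bs W\to T}$ count growth sequences of the appropriate lengths. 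Flow conservation at each interior vertex is a direct telescoping. Using the resistances $r_e=K\beta^{-n}$ on up-edges (with $n=\card(\bs W)$), together with the identity $\sum_{\card(\bs W)=n}N_{V_0\to\bs W}N_{\bs W\to T}=|\mathcal{G}_T|$ (each full sequence hits exactly one level-$n$ clump) and the inequalities $\sum_{\bs V\,\text{parent of }\bs W}N_{V_0\to\bs V}^2\leq N_{V_0\to\bs W}^2$ and $\sum_i x_i^2\leq (\max_i x_i)\sum_i x_i$, I obtain
\[
\mathcal{E}(\theta)\leq \frac{K}{|\mathcal{G}_T|}\sum_{n=2}^{T}\beta^{-n}\max_{\card(\bs W)=n}N_{V_0\to\bs W}\,N_{\bs W\to T}.
\]
Since $|\mathcal{G}_T|\geq Ta_T$ and the number of level-$n$ clumps containing $V_0$ is $na_n$, the \emph{average} value of $y_\bs W\coloneqq N_{V_0\to\bs W}N_{\bs W\to T}$ at level $n$ is of order $|\mathcal{G}_T|/(na_n)\asymp |\mathcal{G}_T|\kappa^{-n}$ by Remark~\ref{saprmk}. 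If $\max y_\bs W$ exceeds this average only by a polynomial in $T$, the bound becomes $\mathcal{E}(\theta)\leq \text{poly}(T)\sum_{n=2}^T(\kappa\beta)^{-n}\leq \text{poly}(T)\cdot(\kappa\beta)^{-T}$, giving exactly the required conductance lower bound.

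\textbf{The main obstacle} is precisely this \emph{load-balancing estimate},
\[
\max_{\card(\bs W)=n}N_{V_0\to\bs W}\,N_{\bs W\to T}\leq \text{poly}(T)\cdot \frac{|\mathcal{G}_T|}{\kappa^n},
\]
which demands that growth sequences distribute their passes near-uniformly over the $\asymp\kappa^n$ clumps at level $n$. Trivial bounds like $N_{V_0\to\bs W}\leq (n-1)!$ and $N_{\bs W\to T}\leq (4T)^{T-n}$ lose the crucial $\kappa$-factor and only recover the single-path energy $\beta^{-T}$, matching the bound already proved. A plausible path forward is to \emph{randomize} the flow by drawing, for each clump $\bs W$, its parent $\pi(\bs W)$ uniformly among its non-$V_0$ collapsible cubes, and then averaging the resulting random tree-flows; this would effectively replace the $\max$ by an expectation and should be amenable to a sub-multiplicativity/concatenation argument analogous to the one underlying Remark~\ref{saprmk}. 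Making this quantitative will, however, require a combinatorial description of \emph{where} on the boundary of a typical clump the collapsible cubes sit---a question tied to the perimeter-area statistics of self-avoiding polygons---and this is what appears to be the more delicate study of the state transition space of $\hat{\bs X}_i$ that the authors defer to future work.
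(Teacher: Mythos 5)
The statement you are addressing is a \emph{conjecture}: the paper offers no proof of it and explicitly defers the upper bound to future work, so there is nothing in the paper to compare your argument against. Your proposal is a strategy sketch, not a proof, and you say as much yourself. You correctly observe that the preceding lemma already gives $\liminf_{T\to\infty}\eta_T^{1/T}\geq 1/(\kappa\beta)$, so what remains is $\pr[\xi_1\geq T]\geq c\,(\kappa\beta)^T/\mathrm{poly}(T)$, and your dual route via Thomson's principle (a unit flow spread over growth sequences, against the Nash--Williams-type shorting bound used for the other direction) is the natural one; the energy computation you outline, including the telescoping $\sum_{\bs V}N_{V_0\to\bs V}=N_{V_0\to\bs W}$ and the identity $\sum_{\card(\bs W)=n}N_{V_0\to\bs W}N_{\bs W\to T}=|\mathcal{G}_T|$, is sound as far as it goes.

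The genuine gap is the load-balancing estimate $\max_{\card(\bs W)=n}N_{V_0\to\bs W}\,N_{\bs W\to T}\leq\mathrm{poly}(T)\,|\mathcal{G}_T|\,\kappa^{-n}$, which you flag as the ``main obstacle'' but do not prove, and on which the entire conclusion rests. Worse, as a statement about the \emph{maximum} it is almost certainly false: growth sequences concentrate massively on ``round'' clumps (for a roughly $\sqrt{n}\times\sqrt{n}$ square, $N_{V_0\to\bs W}$ grows like a factorial-type count of linear extensions, vastly exceeding the level-$n$ average $|\mathcal{G}_T|\kappa^{-n}$ up to polynomial factors), so the step from $\sum_{\bs W}y_{\bs W}^2\leq(\max y_{\bs W})\sum y_{\bs W}$ to a geometric bound cannot work with the uniform-over-sequences flow. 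Your proposed fix --- randomizing the parent map and averaging tree flows --- would replace the max by a second moment, but controlling that second moment requires exactly the combinatorial information about where collapsible cubes sit on the boundary of a typical clump (equivalently, refined perimeter--area statistics of self-avoiding polygons) that is not available here and that the authors identify as the open difficulty. So the proposal identifies the right framework but does not close the argument; the conjecture remains unproved.
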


\bibliography{references}  
\end{document}